\definecolor{dullmagenta}{rgb}{0.4,0,0.4}   
\definecolor{darkblue}{rgb}{0,0,0.4}
\def\red#1{\textcolor[rgb]{0.9, 0, 0}{#1}}
\newcommand{\ii}{{\rm i}}
\newcommand{\ee}{{\rm e}}
\newcommand{\dd}{{\rm d}}
\newcommand{\CC}{{\mathbb C}}
\newcommand{\RR}{{\mathbb R}}
\newcommand{\DD}{{\mathbb D}}
\newcommand{\TT}{{\mathbb T}}
\def\conn{\mathop{\rm conn}}
\def\clos{\mathop{\rm clos}}
\def\inte{\mathop{\rm int}}
\newtheorem{thm}{Theorem}[section] 
\newtheorem{prop}[thm]{Proposition}
\newtheorem{lemma}[thm]{Lemma}
\newtheorem{cor}[thm]{Corollary}
\newtheorem*{thm-others}{Theorem}
\newenvironment{customthm}[1]
  {\innercustomthm}
  {\endinnercustomthm}
\theoremstyle{remark}
\newtheorem*{defn}{Definition}
\newtheorem*{remark}{Remark}
\def\C{{\mathbf{C}}}
\def\R{{\mathbb R}}
\def\bs{\bigskip}
\definecolor{MyGray}{rgb}{0.92,0.96,0.98}
\definecolor{MyRed}{rgb}{0.95,0.8,0.85}
\title{Sharpness of connectivity bounds for quadrature domains}
\author{Seung Yeop Lee and Nikolai Makarov}
\begin{document}

\maketitle

\section{Introduction and results}\label{sec:1}

In this paper we prove the sharpness of connectivity bounds established in \cite{1stpaper}. The proof depends on some
facts in the theory of univalent polynomials. We also discuss applications to the equation $r(z) =\bar z$ where $r$ is a rational function.

\subsection{Quadrature domains}
By definition, a bounded  connected open set  $\Omega\subset \CC$, satisfying $\inte\clos\Omega=\Omega$, is a {\em bounded quadrature domain} (BQD) if there  exists a rational function $r=r_\Omega$, which we call the {\em quadrature function} of $\Omega$, such that $r(\infty)=0$, all poles of $r$ are in $\Omega$, and
\begin{equation}\label{eq:QI}
\forall f\in C_A(\Omega), \qquad
\int_\Omega f\,\dd A=\frac{1}{2\ii}\oint_{\partial\Omega} f(z)\,r(z)\,\dd z,
\end{equation}
where $C_A(\Omega)$ is the space of analytic functions on $\Omega$ continuous up to the boundary.

Similarly, a domain $\Omega\subset \widehat\CC$, satisfying $\infty\in\Omega$ and $\inte\clos\Omega=\Omega$, is called an {\em unbounded quadrature domain} (UQD) if there  exists a rational function $r=r_\Omega$ such that all poles of $r$ are in $\Omega$ and
\begin{equation*}
f\in C_A(\Omega), ~~ f(\infty)=0  \quad\Longrightarrow\quad
\int_{\Omega} f\,\dd A=\frac{1}{2\ii}\oint_{\partial\Omega} f(z)\,r(z)\,\dd z,
\end{equation*}
where the integral over $\Omega$ is understood in the principal value sense.

For a quadrature domain (bounded or unbounded) $\Omega$, 
we define $$d_\Omega=\deg r_\Omega$$
and call $d_\Omega$ the {\it order} of $\Omega$. 
The poles of the qudrature function $r_\Omega$ are called the {\it nodes} of $\Omega$, and we define
$$ n_\Omega=\#\{\text{distinct nodes}\}. $$

It can be shown (see e.g. \cite{1stpaper}) that the reflection in the unit circle, i.e. $z\mapsto 1/\overline z$, provides a one-to-one correspondence between the class of BQDs $\Omega$ of order $d + 1$ satisfying $0\in\Omega$ and the class of UQDs of order $d$ satisfying $0\notin \clos \Omega$.

Quadrature domains play an important role in several areas of analysis, see e.g. \cite{Gus05}. We refer to \cite{Sakai-book-QD} for a more
detailed discussion of basic properties and examples of QDs; see also \cite{1stpaper}.  Here we only mention that
the boundary of a quadrature domain is locally a real analytic curve except for finitely many cusps and double points; in fact the boundary is a real algebraic curve (up to a finite set of isolated points).

\subsection{Connectivity bounds}

Given a domain $\Omega\subset\widehat\CC$, we denote by $\conn\Omega$ the {\em connectivity of $\Omega$} so 
\begin{equation*}
	\conn\Omega=\#\{\text{components in $\Omega^c$}\}.
\end{equation*}
In \cite{1stpaper} we established the following connectivity bounds for quadrature domains.
\bigskip
\begin{customthm}{\!\!}
 If $\Omega$ is an unbounded quadrature domain of order $d_\Omega\geq 2$, then
\begin{equation}\label{eq-thma1}
\conn\Omega~\leq~ \min\{d_\Omega+n_{\Omega}-1,~ 2d_\Omega-2\}.
\end{equation}
If, in addition, there is a node at $\infty$, then
\begin{equation}\label{eq-thma2}
\conn\Omega~\leq~ d_\Omega+n_{\Omega}-2.
\end{equation}

If $\Omega$ is a bounded quadrature domain of order  $d_\Omega\ge3$, then
\begin{equation}\label{eq-thma3}
\conn\Omega~\leq~ \min(d_\Omega+n_{\Omega}-2,~2d_\Omega-4).
\end{equation}
If, in addition, there are no nodes of  multiplicity $\ge3$, then
\begin{equation}\label{eq-thma4}
\conn\Omega~\leq~ \min(d_\Omega+n_{\Omega}-3, ~2d_\Omega-4).
\end{equation}
\end{customthm}

Our goal is to show that these inequalities are best possible.

\subsection{Sharpness}

We will prove the sharpness of the connectivity bounds \eqref{eq-thma1}--\eqref{eq-thma4} for all integers $n$ and $d \le n$. Moreover, we will show that we can even prescribe
node multiplicities. The following theorems are the main results of the paper.
\bigskip
\begin{customthm}{A}\label{thm:UQD} Given positive integers $d\ge2$ and $n\le d$, and given a partition  $d=\mu_1+\dots+\mu_n$, there exists an unbounded quadrature domain $\Omega$ of order $d$ with $n$ finite nodes of multiplicities  $\mu_1,\dots,\mu_n$ such that
$$\conn(\Omega)=\min(d+n-1, ~2d-2).$$
Also, there  exists an unbounded quadrature domain $\Omega$ of order $d$ such that the infinity is a node of multiplicity $\mu_n$, the finite nodes have multiplcities $\mu_1,\dots, \mu_{n-1}$, and 
$$\conn(\Omega)=d+n-2.$$
\end{customthm}

\begin{customthm}{B}\label{thm:BQD}  Given positive integers $d\ge3$ and $n\le d$, and  a partition  $d=\mu_1+\dots+\mu_n$,
there exists a bounded quadrature domain $\Omega$ of order $d$ with $n$ nodes of multiplicities  $\mu_1,\dots,\mu_n$ such that
$$\conn(\Omega)=\min(d+n-3, ~2d-4).$$
If at least one of the $\mu_j$s is $\ge3$,  there exists an $\Omega$ such that
$$\conn(\Omega)=d+n-2.$$
\end{customthm}

The case $d = n$ has been known. Referring to Sakai's work, Gustafsson proved in \cite{Gu1} the existence of BQDs of connectivity $2d - 4$ for all $d > 3$.  
His argument  works for UQDs as well. 
Our proofs of Theorems \ref{thm:UQD} and \ref{thm:BQD} will use ideas of Gustafsson and Sakai (along with several other tools).

\subsection{Fixed points of anti-holomorphic rational maps}\label{sec:fixed}
The sharpness of connectivity bounds for {\em unbounded} quadrature domains (Theorem \ref{thm:UQD}) has the following application to the well-studied
problem concerning the number of (distinct) solutions to the equation
$$r(z) = \bar z,$$
where $r$ is a rational function. We will denote the number of distinct roots (including $\infty$) of the above equation by $\widehat F_r$.

A rather straightforward generalization of Khavinson-\'Swi\c atek \cite{Kha1} and Khavinson-Neumann \cite{Kha2} is the following statement.
\bigskip
\begin{customthm}{\!\!} Let $r$ be a rational function of degree $d\ge2 $ with  $n$  distinct poles.  
$$\widehat F_r\le
\min\{3d+2n-3, 5d-5\}.$$
\end{customthm}

We will show that this estimate is best possible in the following sense.
\bigskip
\begin{customthm}{C}\label{thm:C}
Given $d\geq2$, $n\leq d$, and given a partition $d=\mu_1+\cdots+\mu_n$, there exists a rational function $r$ with pole multiplcities $\mu_j\text{'s}$ such that
$$\widehat F_r= \min\{3d+2n-3, 5d-5\}.$$
Moreover, we can choose $r$ so that $r(\infty)\ne\infty$, and if $\mu_n>1$, then we can also choose $r$ so that $\infty$ is a pole of multiplicity $\mu_n$
\end{customthm}

Two special cases of Theorem \ref{thm:C} has already been known. Rhie \cite{Rh03} showed that for all $d$ there exists a rational function of degree $d$ with $5d-5$ finite fixed points (the case $n=d$), and Geyer \cite{Ge03} proved that there are polynomials with $3d-2$ finite fixed points
(the case $n=1$).


\subsection{Gravitational lensing, Crofoot-Sarason conjecture}\label{sec:GL-intro}

The theorems of Rhie and Geyer are actually stronger than the statements just concerning the existence of functions with a certain number of fixed points.

(a) In \cite{Rh03} Rhie constructed her examples in the class of functions of the form
$$r(z)=-\gamma\,z+s+\sum_{j=1}^N\frac{\varepsilon_j}{z-z_j},$$
where $s\in\CC$ and $z_j\in\CC$ but
\begin{equation}\label{eq:phys-constraint}   \varepsilon_j>0,~~~ \gamma\ge0.
\end{equation}
In gravitational lensing, the equation $r(z)=\bar z$ then has the following interpretation: $N$ is the number of gravitational masses $\varepsilon_j$ at positions $z_j$ in the lens plane, $s$ is the position of a light source, and $\gamma$ is the ``{\it shear}''. The finite fixed points of $\bar r$ are the images of the light source, and  Rhie showed that if $\gamma=0$ (so $\deg r=N$) then the number of images can be $5N-5$.

In Section 6 we will extend Rhie's result to the case of non-zero shear. We will show that 
{\it if $\gamma\ne0$ (so $\deg r=N+1$), then}  
the number of gravitational lensing images can be as large as $5N-1$, see Proposition \ref{thm:smallgamma} and Proposition \ref{cor:gravi}. Note that this fact does not follow from Theorem \ref{thm:C} because we need to satisfy the physical constraints \eqref{eq:phys-constraint}.

(b) In \cite{Ge03}, Geyer established the existence of polynomials with $3d-2$ fixed points by proving the Crofoot-Sarason conjecture which claimed that for every $d>1$  there exists a degree $d$ polynomial $p$ and distinct points $c_1,\cdots,c_{d-1}$ in $\CC$ such that $\overline{p(c_j)}=c_j$ and $p'(c_j)=0$.  Explicit examples were known for $d=2,3$ (Crofoot-Sarason, see \cite{Kha1}) and $d=4,5,6,8$ in Bshouty and Lyzzaik \cite{BL}. 

Geyer's argument was based on Thurston's classification of critically finite topological rational maps and Berstein-Levy's theorem on the absence of obstructing multi-curves for certain types of critically finite topological polynomials.
It is not clear how to extend Geyer's argument to the case of general rational functions. 

In Section \ref{sec:Crofoot-Sarason} we will give an alternative proof of Crofoot-Sarason conjecture. We will also prove analogous statements for rational function.

\subsection{Extreme quadrature domains and inscribed cardioid theorem}

Our argument in the proof of sharpness results is completely different from the arguments of Rhie and Geyer.  It works for all kinds of pole multiplicity data, and applies to  both  unbounded  and bounded quadrature domains.   Let us briefly describe the main idea.

A simply connected UQD of order $d$ is {\it extreme} if $\infty$ is the only node and the boundary has $d+1$ cusps and $d-2$ double points; those are the maximally possible numbers for a given degree. Similarly, a simply connected BDQ of order $d$ is {\it extreme} if the origin is the only node and the boundary has $d-1$ cusps and $d-2$ double points.  

\bigskip
\begin{customthm}{D}\label{thm:D}
Extreme quadrature domains exist in all orders. 
\end{customthm}

This fact is stated (somewhat implicitely) in Suffridge's paper \cite{Suffridge72}, see Section \ref{app:Suffridge}. Since we have had some difficulties in understanding the proof in \cite{Suffridge72}, we give a self-contained, more direct proof of Theorem \ref{thm:D} in Section \ref{sec:extreme}. (Our proof is still based on Suffridge's ideas.)

Extreme QDs have some perculiar geometry. If $\Omega$ is an extreme BQD, then the unbounded component of $\inte(\Omega^c)$ is {\it cardioid-like} but all bounded components are {\it deltoid-like}, see the definitions of {\it cardioid-like} and {\it deltoid-like} in Section \ref{sec:inscribed-card}. If $\Omega$ is an extreme UQD then all components are deltoid-like. We use extreme QDs as ``building blocks'' for the construction of QDs with particular connectivity properties. The  construction is based on the following statement.

{\em {\bf Inscribed cardioid Theorem.} One can inscribe any cardioid-like curve inside any deltoid-like curve so that there are at least 4 intersection points. }

\begin{figure}
\begin{center}
\includegraphics[width=0.2\textwidth]{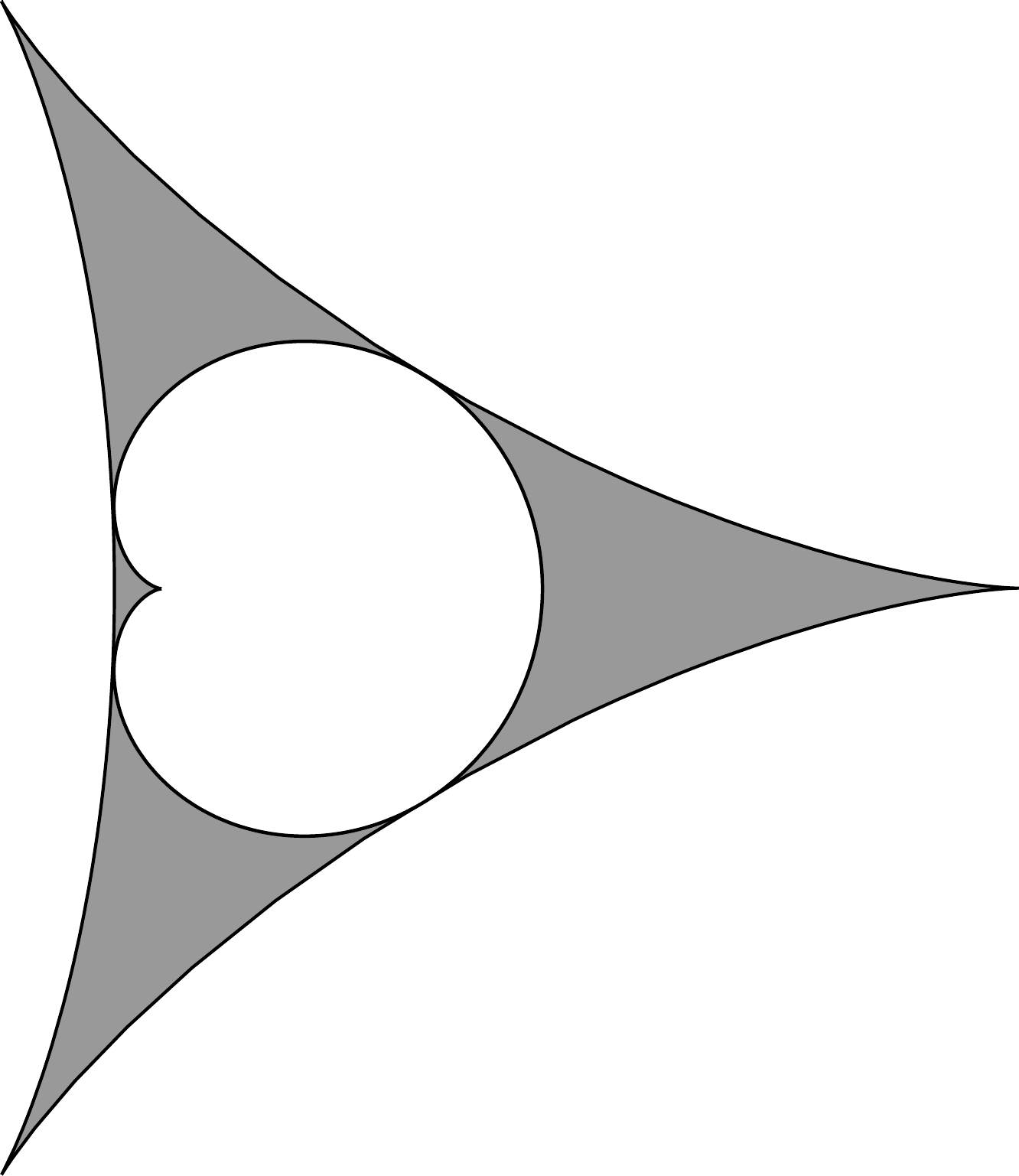}
\caption{\label{fig:card-in-deltoid} Cardioid inscribed in deltoid}
\end{center}
\end{figure}

See Figure \ref{fig:card-in-deltoid}  (with genuine cardioid and deltoid) for illustration.

\subsection{Organization of the paper} 

In Section \ref{sec:2} we study the geometry of extreme QDs and  prove their existence (Theorem \ref{thm:D}) in Section \ref{sec:extreme}. The proof of the inscribed cardioid theorem is in Section \ref{sec:packing}. In Section \ref{sec:main-proof} we prove Theorem \ref{thm:UQD} and \ref{thm:BQD}. 
The main tools here (in addition to extreme quadrature domains and the inscribed cardioid theorem) are Hele-Shaw deformations and quasi-conformal surgery from \cite{1stpaper}. 
Finally, in Section \ref{sec:rz} we prove Theorem \ref{thm:C} and discuss gravitational lensing and rational analogues of Crofoot-Sarason polynomials.

The authors would like to thank Dmitry Khavinson and Donald Sarason for valuable communication and references.

\section{Extreme quadrature domains}\label{sec:2}

\subsection{Suffridge polynomials in \texorpdfstring{$(S_d)$}{Sd}} 
We will denote by $(S_d)$ the set of polynomials
$$f(z)=z +a_2 z^2+\dots +a_d z^d$$
which are univalent in the unit disk $\mathbb D$, i.e.
 $$z_1\ne z_2\quad \Longrightarrow\quad f(z_1)\ne f(z_2).$$

\begin{defn}  Let $\Omega\subset\widehat \CC$ be a domain with compact boundary, and $\inte \clos\Omega=\Omega$.  A function $S:\clos\Omega\to\CC$ is called a {\em Schwarz function of $\Omega$} if $S$ is continuous, meromorphic in $\Omega$, and $S(z)=\overline z$ for all $z\in\partial \Omega$. 
\end{defn}
The following is well-known (e.g. p.154 in \cite{Davis}, or Lemma 3.1 in \cite{1stpaper}).
\bigskip
\begin{lemma}\label{Schwarz-QD} Let $\Omega$ be as above. Then $\Omega$ is a quadrature domain if and only if there exists a Schwarz function of $\Omega$.  In this case, we have
$$S(z)=r_\Omega(z)+\frac{1}{\pi}\int_{\Omega^c}\frac{\dd A(w)}{z-w},$$
where the integral is in the principal value sense.
\end{lemma}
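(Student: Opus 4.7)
The strategy is to prove both directions of the equivalence via the Cauchy transform $T_E(z)=\tfrac{1}{\pi}\int_E\tfrac{\dd A(w)}{z-w}$ of a measurable set $E$, exploiting two standard properties: $T_E$ is continuous on $\CC$ and analytic off $\clos E$, and $T_E(z)-\bar z$ is analytic on the interior of $E$ (since $\partial_{\bar z}T_E=\mathbf{1}_E$ in the distributional sense). I would work out the bounded case in detail and treat the unbounded case by a parallel argument (or, when convenient, by invoking the reflection $z\mapsto 1/\bar z$ mentioned in the introduction).

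For the easier direction, assume a Schwarz function $S$ exists. Stokes's formula yields $\int_\Omega f\,\dd A=\tfrac{1}{2\ii}\oint_{\partial\Omega}f(z)\,\bar z\,\dd z$ for every $f\in C_A(\Omega)$. Substituting $\bar z=S(z)$ on $\partial\Omega$ and writing $S=r+(S-r)$, where $r$ is the sum of the principal parts of $S$ at its finitely many poles in $\Omega$, the piece $S-r$ is analytic on $\Omega$ and continuous up to $\partial\Omega$, so Cauchy's theorem kills its contribution. What remains is the quadrature identity with quadrature function $r$, which automatically satisfies $r(\infty)=0$.

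For the harder direction, test the quadrature identity on $f(w)=1/(w-z)$ for $z$ outside $\clos\Omega$. Evaluating the right-hand side by residues (and using that $r(w)/(w-z)$ has vanishing residue at $\infty$ because $r$ does) gives $T_\Omega(z)=r(z)$ on $\widehat\CC\setminus\clos\Omega$. Continuity of $T_\Omega$ across $\partial\Omega$ then forces the boundary relation $r(z)=\bar z+h(z)$ on $\partial\Omega$, where $h:=T_\Omega-\bar z$ is analytic on $\Omega$. Defining $S:=r-h$ inside $\Omega$ (and $S(z):=\bar z$ on $\partial\Omega$) produces the required Schwarz function. To recover the explicit integral formula, I would choose a large ball $B_R\supset\clos\Omega$ and use $T_{B_R}(z)=\bar z$ on $B_R$ to split $\bar z=T_\Omega(z)+T_{B_R\setminus\Omega}(z)$, giving $-h=T_{B_R\setminus\Omega}$ and hence $S(z)=r(z)+\tfrac{1}{\pi}\int_{B_R\setminus\Omega}\tfrac{\dd A(w)}{z-w}$. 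The annular Cauchy transform of $B_{R'}\setminus B_R$ vanishes at any $z\in B_R$, so the integral is independent of large $R$; this stability is precisely the meaning of the principal value notation $\int_{\Omega^c}$ in the statement.

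The main obstacle will be the principal-value bookkeeping at infinity in the UQD case: $\Omega$ itself extends to $\infty$, the quadrature identity is improper there, and a node may lie at $\infty$. The algebraic skeleton (Stokes plus Cauchy transform) is unchanged, but the principal part of $S$ at $\infty$ must be absorbed into $r$ in a way compatible with the convergence of $\oint f\,r\,\dd z$, which is the only genuinely delicate point in the argument.
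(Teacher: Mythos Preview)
The paper does not actually prove this lemma: it is stated as well-known and referenced to p.~154 of Davis \cite{Davis} and to Lemma~3.1 of \cite{1stpaper}, with no argument given. Your proposal supplies precisely the standard proof that those references contain---Stokes plus the Cauchy transform $T_\Omega$, testing the quadrature identity on Cauchy kernels, and recovering the integral formula from $T_{B_R}(z)=\bar z$ on $B_R$---so there is nothing to compare. Your sketch is correct for the bounded case, and your caveat about the principal-value bookkeeping in the unbounded case (especially when $\infty$ is a node) is exactly the point that requires care; the cited references handle it the same way you indicate.
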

\bigskip
\begin{lemma}\label{lem:suff-BQD}
If $f\in(S_d)$, then $\Omega=f(\DD)$ is a BQD of order d with a single node at the origin. The boundary of $\Omega$ has at most $d-1$ cusps and at most $d-2$ double points.  
\end{lemma}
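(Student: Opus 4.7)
The plan is to construct an explicit Schwarz function on $\Omega := f(\DD)$ and invoke Lemma \ref{Schwarz-QD} to obtain the BQD structure, then count cusps and double points directly from the polynomial parameterization $f|_{\partial\DD}$.

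Since $f$ is univalent on $\DD$, its inverse $\phi := f^{-1}$ is a well-defined conformal map $\Omega \to \DD$ extending continuously to $\overline{\Omega}$. Writing $\bar f(z) := \sum_{k=1}^d \overline{a_k}\, z^k$ for the polynomial with conjugate coefficients, I would set
\[
S(w) \;:=\; \bar f\!\left(\tfrac{1}{\phi(w)}\right), \qquad w \in \Omega.
\]
Because $\phi(0) = 0$ and $\phi'(0) = 1/f'(0) = 1$, the function $1/\phi$ has a simple pole at $w = 0$, so $S$ is meromorphic on $\Omega$ with a single pole of order exactly $d$ at the origin (leading term $\overline{a_d}/w^d$) and no other poles. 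For $w \in \partial\Omega$ one has $|\phi(w)| = 1$, so $1/\phi(w) = \overline{\phi(w)}$ and
\[
S(w) \;=\; \bar f(\overline{\phi(w)}) \;=\; \overline{f(\phi(w))} \;=\; \overline w.
\]
At a double point of $\partial\Omega$, $\phi$ has two distinct boundary limits in $\partial\DD$, but both yield the same $\overline w$, so $S$ is continuous on $\overline\Omega$. Lemma \ref{Schwarz-QD} then yields that $\Omega$ is a BQD whose quadrature function $r_\Omega$ has the same polar part as $S$; in particular $d_\Omega = d$ and the origin is the unique node, of multiplicity $d$.

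For the cusp bound, cusps of $\partial\Omega$ occur precisely at images $f(z_0)$ of points $z_0 \in \partial\DD$ at which the parameterization $\theta \mapsto f(e^{i\theta})$ fails to be an immersion, i.e.\ $f'(z_0) = 0$. Univalence of $f$ in $\DD$ forces $f' \neq 0$ on $\DD$; since $\deg f' = d - 1$, at most $d - 1$ of these zeros can lie on $\partial\DD$, giving at most $d - 1$ cusps.

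The main obstacle is the double-point bound. A double point of $\partial\Omega$ corresponds to an unordered pair $\{z_1, z_2\} \subset \partial\DD$ with $z_1 \neq z_2$ and $f(z_1) = f(z_2)$, equivalently an off-diagonal zero on $(\partial\DD)^2$ of the symmetric bi-degree $(d-1, d-1)$ polynomial
\[
P(z_1, z_2) \;:=\; \frac{f(z_1) - f(z_2)}{z_1 - z_2}.
\]
My plan is to view $\partial\Omega$ as lying on the rational algebraic curve $C \subset \PP^1 \times \PP^1$ of bi-degree $(d, d)$ parameterized by $z \mapsto (f(z), \bar f(1/z))$, whose normalization is $\widehat{\CC}$. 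Univalence of $f$ on $\DD$ rules out ``spurious'' self-intersections of $C$ from pairs with $|z_i| \neq 1$ (if $|z_1|,|z_2| > 1$ or if one of $|z_i|$ is in $\DD$, the identifications $\bar f(1/z_1) = \bar f(1/z_2)$ and $f(z_1) = f(z_2)$ would force a coincidence inside $\DD$, contradicting univalence). A careful bookkeeping of the $\delta$-invariants at each boundary cusp and each boundary double point, combined with the genus-zero relation $\sum_p \delta_p = (d-1)^2$ for $C$, then produces the bound $\delta \le d - 2$. Making this accounting rigorous---in particular identifying the precise local types at cusps of $\partial\Omega$ (ordinary $A_2$ versus higher) and excluding tacnodal boundary crossings in the generic case---is the most delicate step, and presumably what Suffridge's analysis in \cite{Suffridge72} supplies.
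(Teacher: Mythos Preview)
Your construction of the Schwarz function and the cusp count are correct and match the paper's proof essentially verbatim.

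For the double-point bound, however, the paper does \emph{not} use any algebro-geometric genus count. Instead it invokes the connectivity bound \eqref{eq-thma3} from \cite{1stpaper} as a black box: a BQD of order $d$ with a single node has at most $d-1$ components in its complement, and a short Hele-Shaw argument upgrades this to at most $d-1$ components in the \emph{interior} of the complement. Since $\Omega=f(\DD)$ is simply connected, each double point of $\partial\Omega$ pinches off an additional bounded component of $\inte(\Omega^c)$, so
\[
1+\#\{\text{double points}\}\;\le\;d-1.
\]
That is the entire argument; Suffridge's paper \cite{Suffridge72} is not used here.

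Your proposed route via the bi-degree $(d,d)$ rational curve $C\subset\PP^1\times\PP^1$ and the genus formula $\sum_p\delta_p=(d-1)^2$ is a natural idea, but as it stands it has a genuine gap beyond the one you flag. Your claim that univalence rules out nodes of $C$ coming from pairs with $|z_1|\ne1$ or $|z_2|\ne1$ is not correct in the mixed case $|z_1|<1<|z_2|$: the equation $f(z_1)=f(z_2)$ does not contradict univalence on $\DD$, and $\bar f(1/z_1)=\bar f(1/z_2)$ does not contradict it either since $1/z_1\notin\DD$. Such pairs can and do occur, and they contribute to the $\delta$-count; without controlling them you cannot isolate the contribution of the boundary double points. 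One can push this kind of argument through (the curve $C$ carries an anti-holomorphic involution $z\mapsto1/\bar z$ that organizes the singularities, and the cusps on $\TT$ each contribute $\delta\ge1$), but it is considerably more work than the one-line application of \eqref{eq-thma3} that the paper uses.
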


The cusps are the points $f(\zeta)$ such that $|\zeta|=1$ and $f'(\zeta)=0$. The double points are the points $f(\zeta_1)=f(\zeta_2)$ with $|\zeta_1|=|\zeta_2|=1$ and  $\zeta_1\neq\zeta_2$.

The first part of the lemma has a converse: if $\Omega$ is a simply connected BQD of order $d$ with a single node at the origin then $\Omega=f(\DD)$ for some univalent polynomial of degree $d$.

\begin{proof} It is easy to see that the function
\begin{equation*}
	S(z)=\overline{f\big(1\big/\,\overline{f^{-1}(z)}\,\big)},~~~ ( z\in\DD),\qquad S(z)=\bar z,~~~  (z\in\TT),
\end{equation*}
is a Schwarz function of $\Omega$.
Therefore, $\Omega$ is a BQD.  
It is also clear that $0$ is the only pole of $S$, and the multiplicity of the pole is $d$.
By Lemma \ref{Schwarz-QD}, the same is true for the quadrature function $r_\Omega(z)$.

The cusps correspond to the zeros of $f'$ on the unit circle so there can be at most $d-1$ cusps. 

The connectivity bound \eqref{eq-thma3} implies that a BQD of order $d$ with a single node can have at most $d-1$ connectivity components in its complement. A simple argument with Hele-Shaw flow, see \cite{1stpaper}, allows us to make a stronger statement -- no more than $d-1$ components in the interior of the complement. It follows that a bounded quadrature domain with a single node of order $d$ can have at most $d-1$ components in the complement of its closure.
Therefore we have
$$1+\#\{\text{double points}\}\leq d-1. $$
\vspace{-1.5cm}

\end{proof}
\bigskip
\begin{defn}
$f\in (S_d)$ is a {\em Suffridge polynomial} if the quadrature domain $f(\DD)$ is {\it extreme}, i.e. it has the maximal number of singularities: $d-1$ cusps and $d-2$ double points.   The curve $f(\TT)$ is a {\em Suffridge curve}.  
\end{defn}

Examples of Suffridge curves (see Figure \ref{fig:BQD} and \ref{fig:suff2}):

\begin{description}
\item[(a)] $d=2$: The only (up to rotation) Suffridge polynomial in $(S_2)$ is
$$f(z)=z+\frac{z^2}2.$$
The corresponding curve is the cardioid.
\item[(b)] $d=3$ (K\"ossler `51 \cite{Kossler}; Cowling, Royster '67 \cite{Cowling-Royster}; Brannan '67 \cite{Brannan}): 
$$f(z)=z+\frac{2\sqrt 2}3z^2+\frac{z^3}3.$$
This polynomial simultaneously maximizes $|a_2|$ and $|a_3|$ in $(S_3)$.
\item[(c)] $d=4$ (C. Michel '70 \cite{Michel})
$$ f(z)=z+\frac{3}{2} A\, \ee^{\ii t} z^2+A\, \ee^{-\ii t} z^3+\frac{z^4}{4},
$$
where $A=\frac{1}{2} \sqrt{3 \left(\sqrt{15}-3\right)}$ and $t=\frac{1}{3} \cos ^{-1}\left(\frac{3}{16} \sqrt{9+5 \sqrt{15}}\right)$.  
\item[(d)] $d=5$: This polynomial corresponds to the third picture in Figure \ref{fig:BQD}.
\begin{equation*}
f(z)=\frac{z^5}{5}+\frac{4}{5} \sqrt{\frac{2}{3}} z^4+\frac{6 z^3}{5}+\frac{8}{5} \sqrt{\frac{2}{3}} z^2+z.
\end{equation*}
\item[(e)] $d=6$ and $d=7$: The curves in Figure \ref{fig:suff2} were obtained numerically.  
\end{description}

\begin{figure}[ht]
\begin{center}
\includegraphics[width=3.5cm]{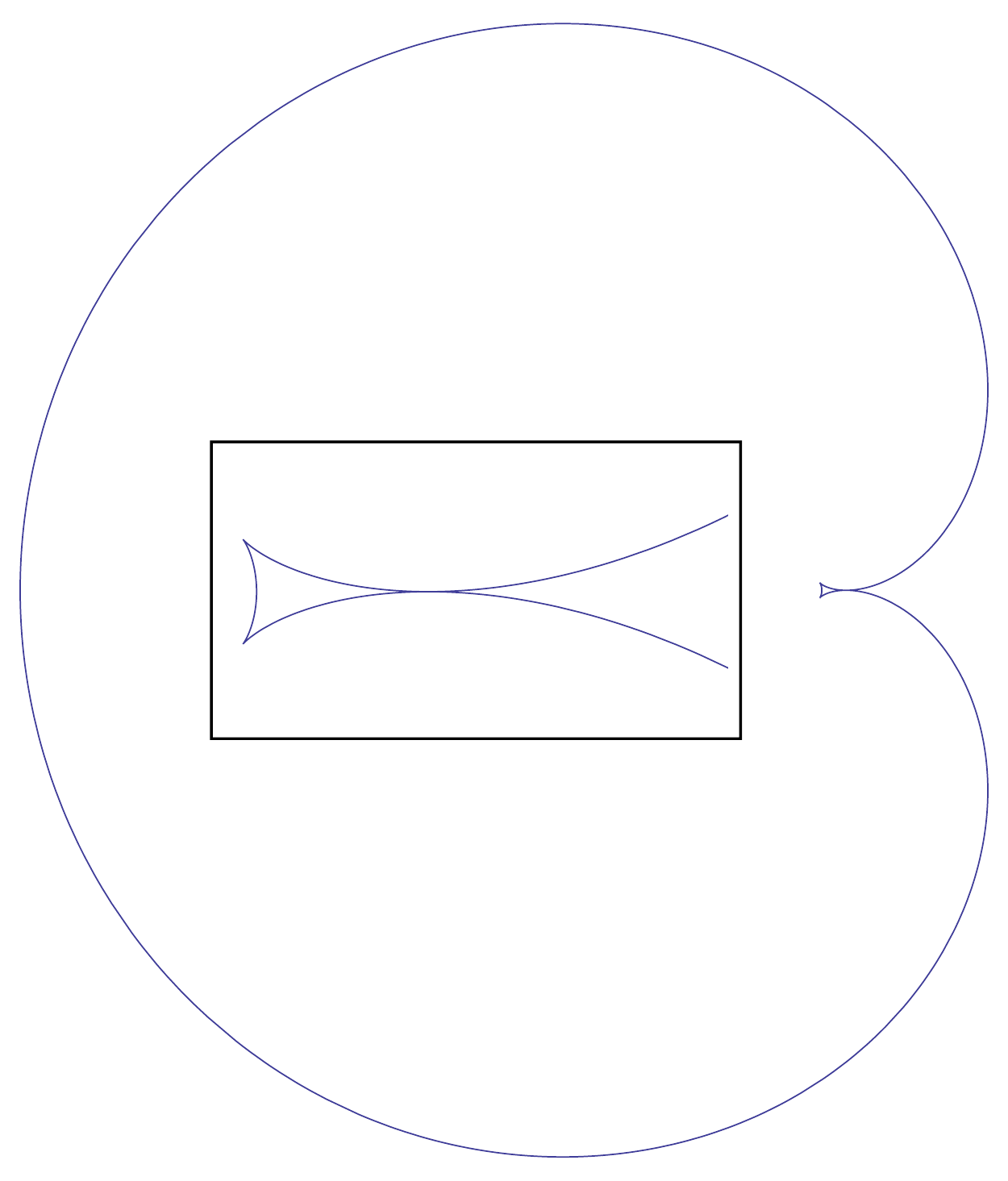}
~~\includegraphics[width=3.5cm]{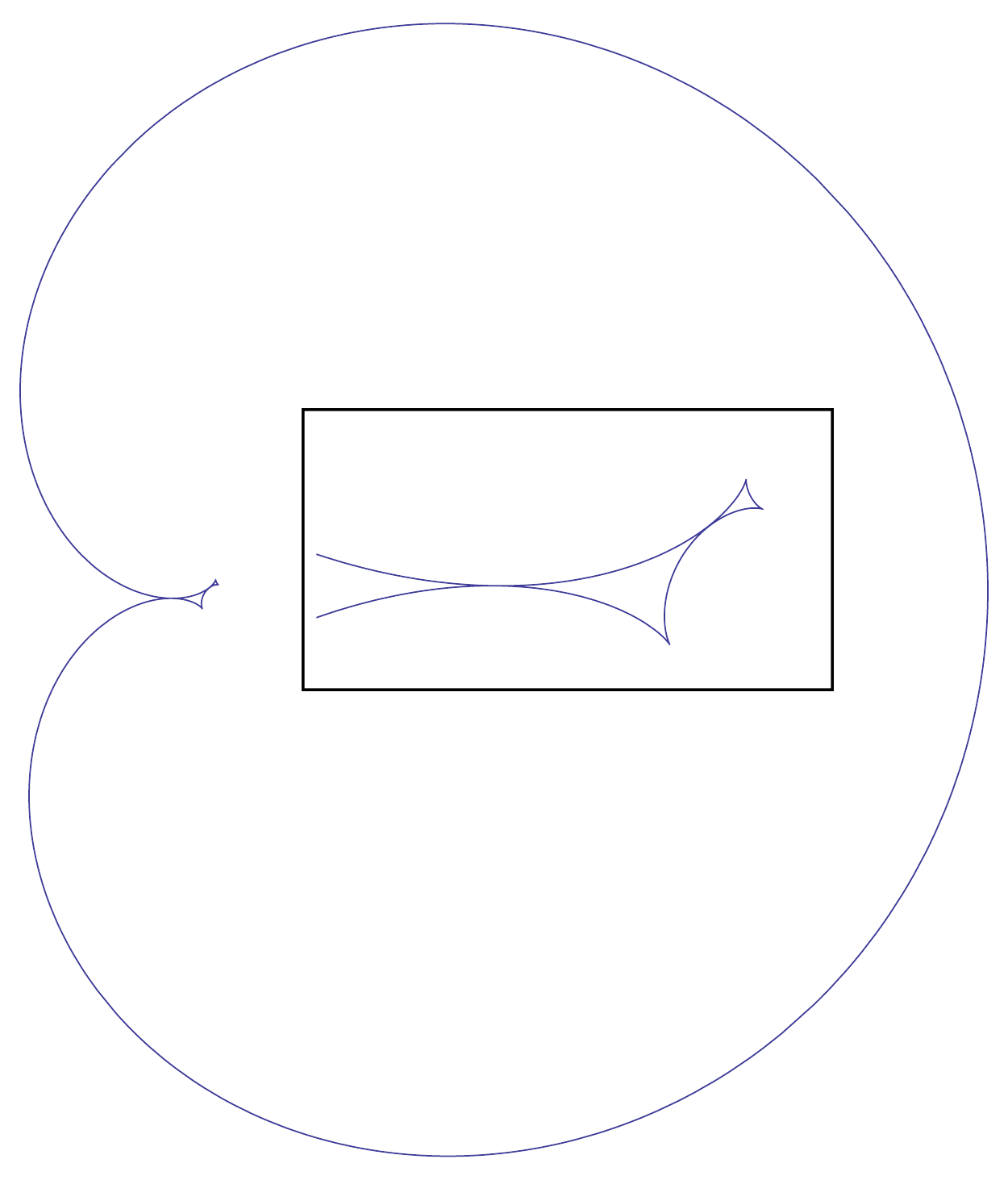}
~~\includegraphics[width=3.5cm]{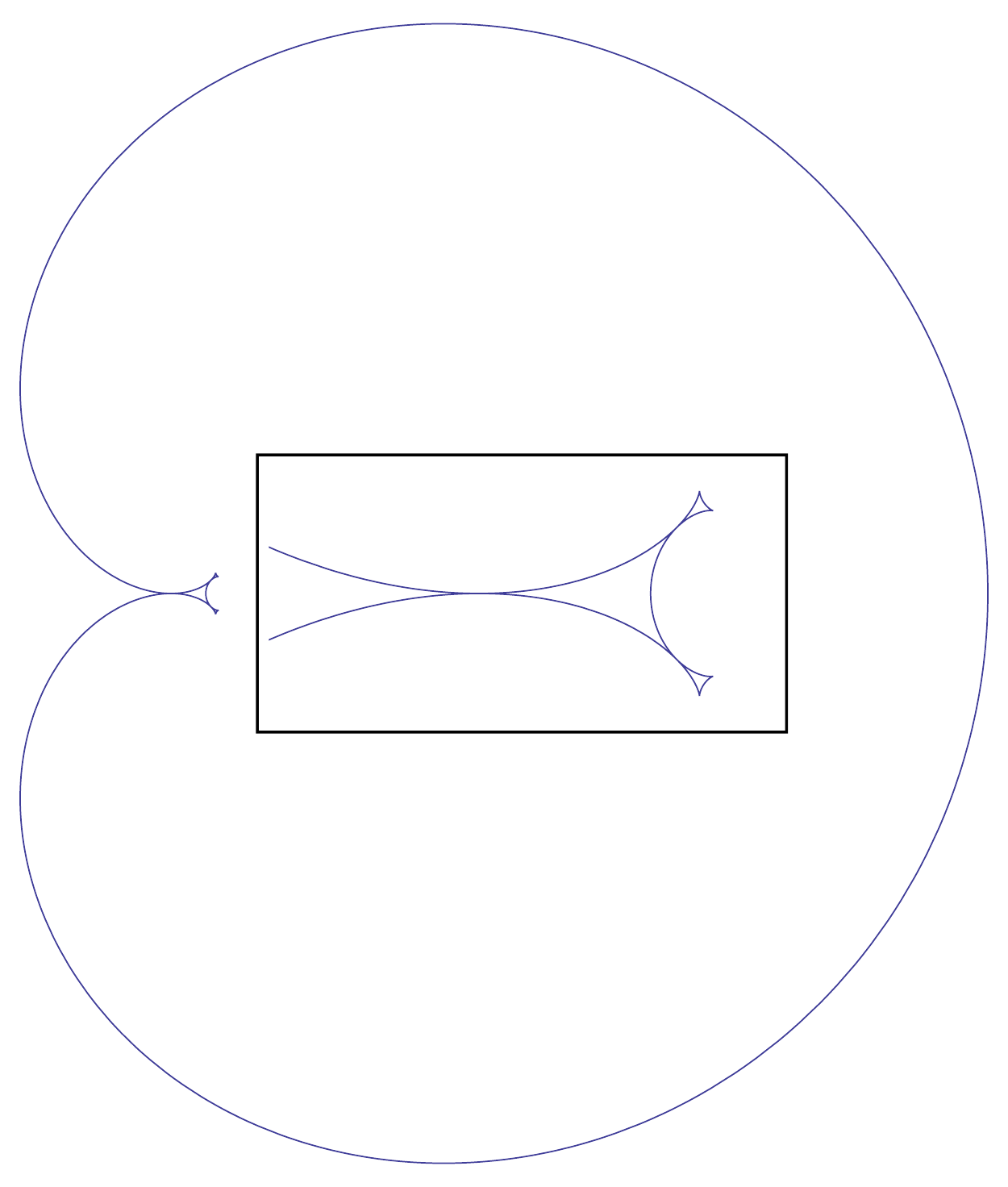}
~~\includegraphics[width=3.7cm]{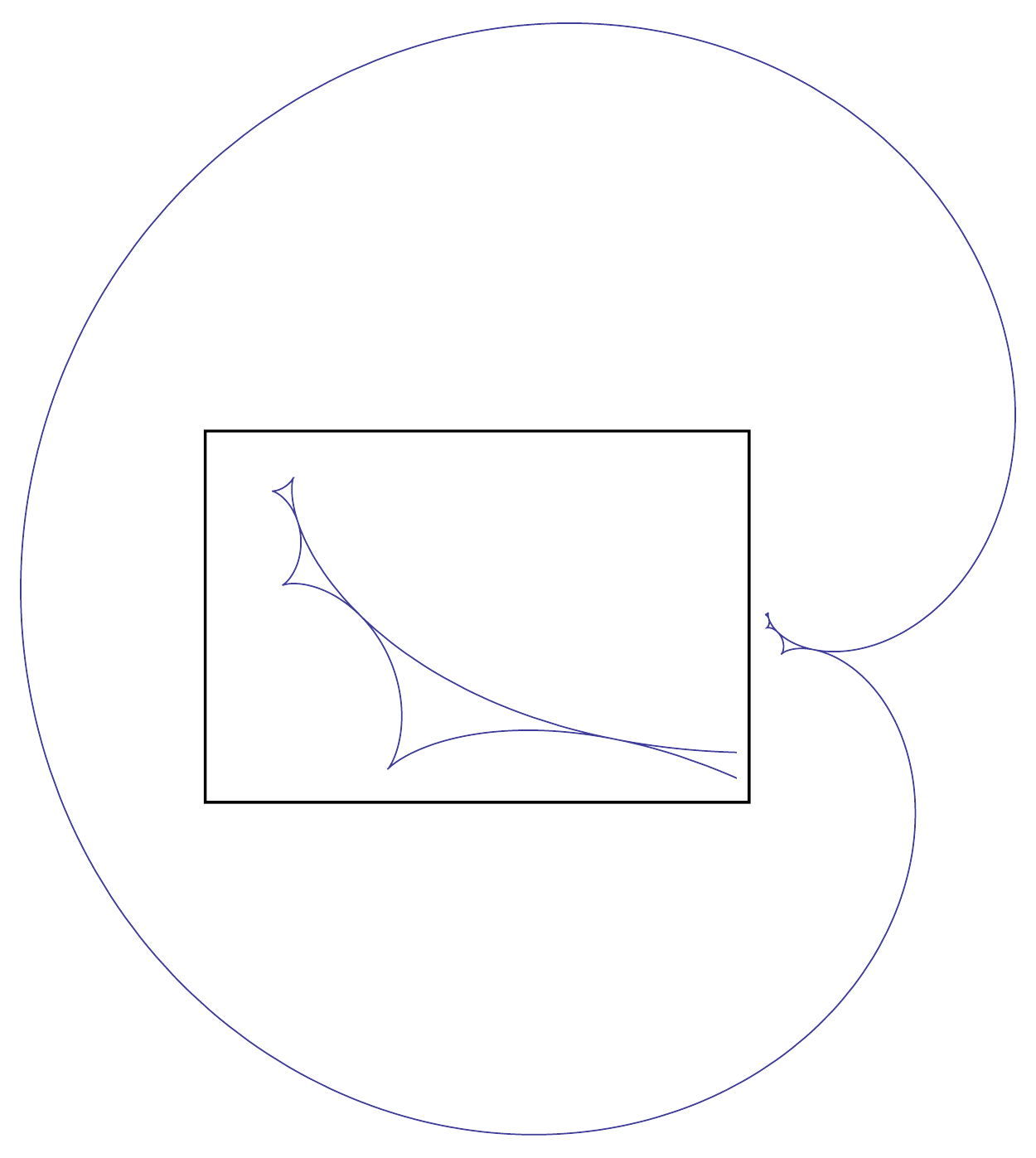}
\caption{\label{fig:BQD} Suffridge curves for $d=3$, $d=4$, $d=5$ and $d=5$ (from left to right). The boxed insets are zoomed images of the singularities.}
\end{center}
\end{figure}

\begin{figure}[ht]
\begin{center}
\includegraphics[width=5cm]{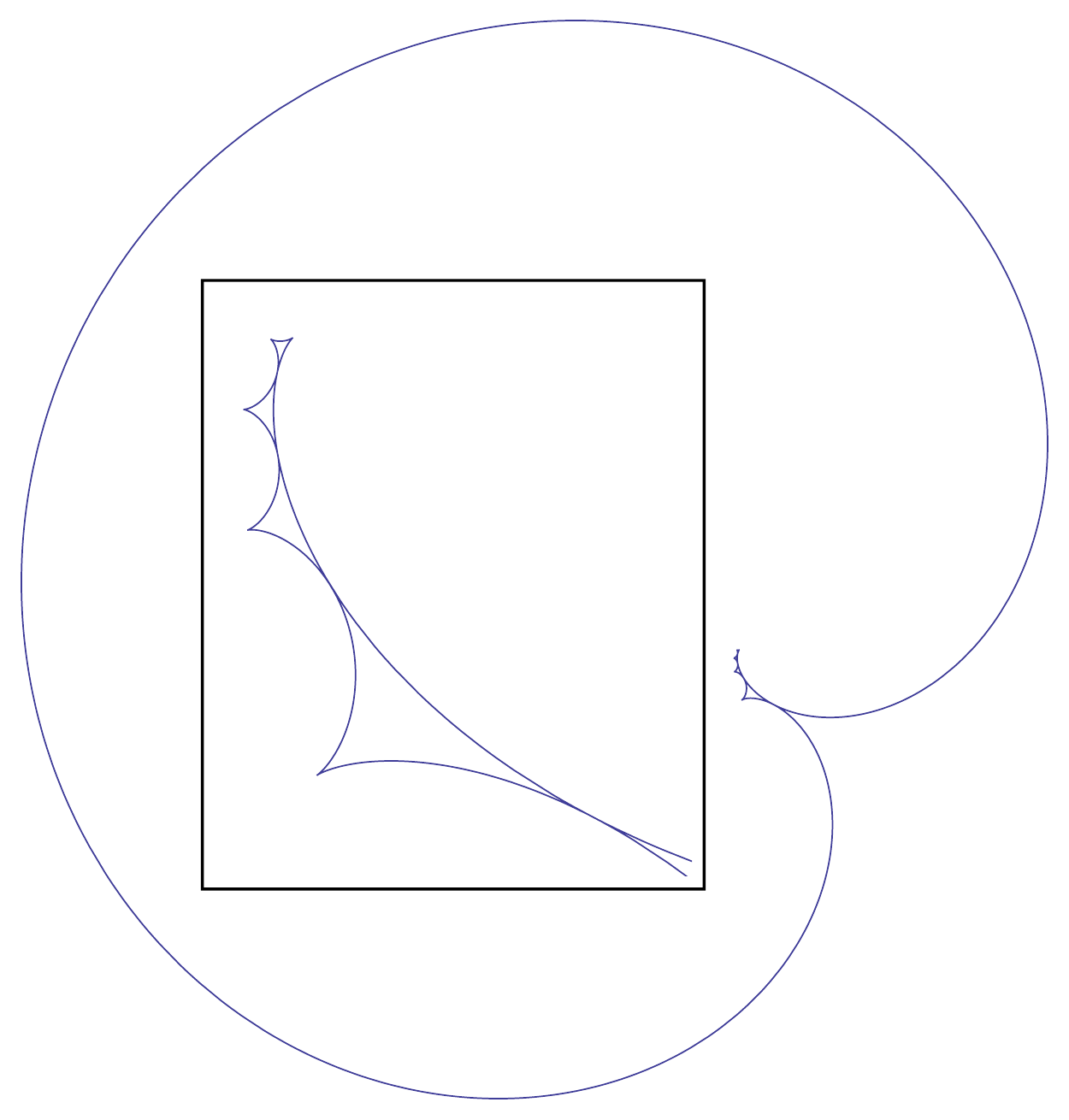}
~~~~\includegraphics[width=5cm]{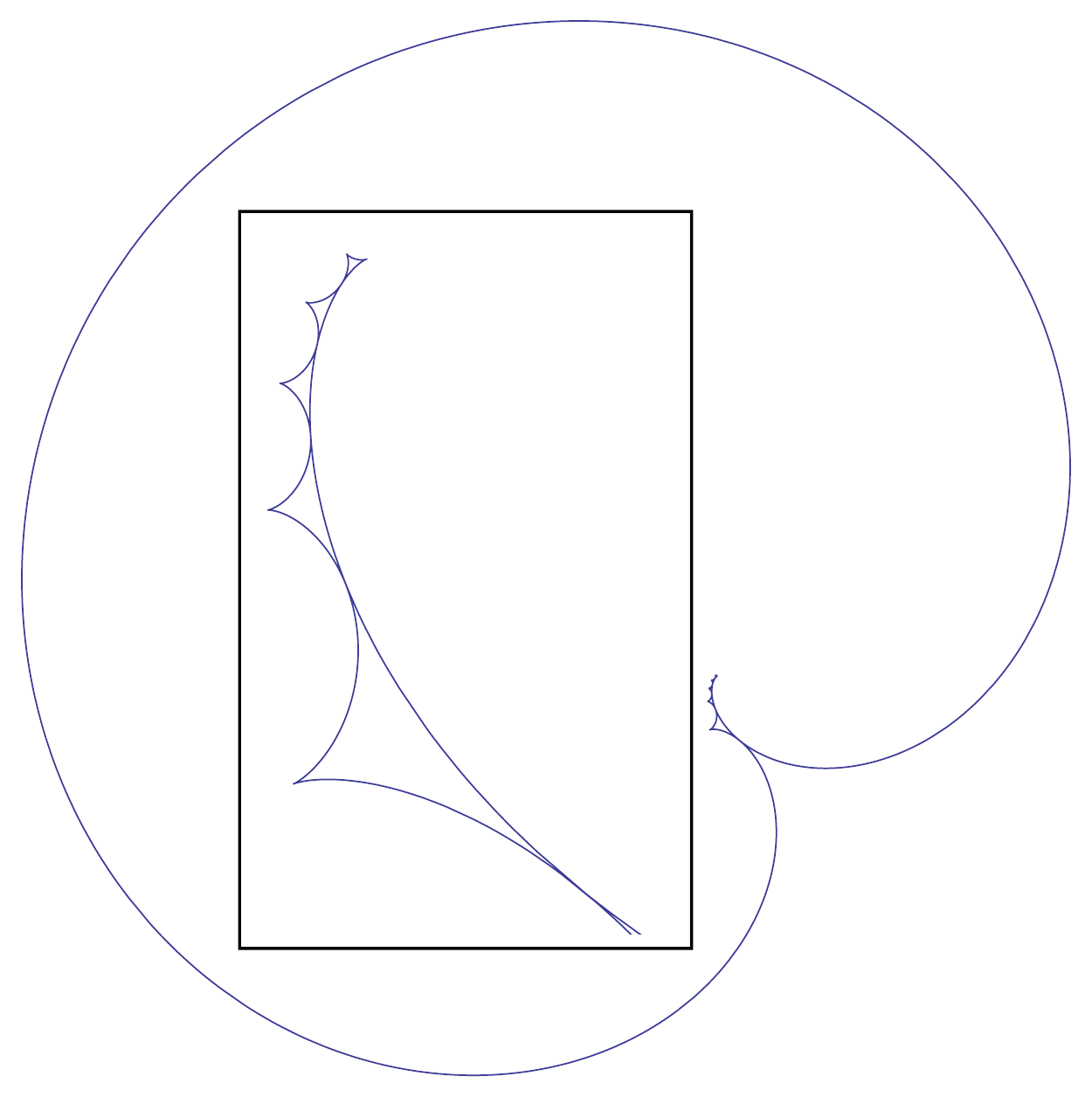}
\caption{Suffridge curves for $d=6$ and $d=7$. \label{fig:suff2}}
\end{center}
\end{figure}

Let us define
$$ (S^*_d)=\{f\in (S_d)\,|\,a_d=1/d\}$$
and consider $(S^*_d)$ as a compact subset of a finite dimensional Euclidean space (polynomials of degree $\le d$).
The existence of Suffridge polynomials in $(S_d)$ for all $d$ follows from Krein-Milman theorem on extreme points and the following fact (cf. \cite{Suffridge72}) which will be explained in the next section.  
\bigskip
\begin{thm}\label{thm:Sd} Extreme points of $(S^*_d)$ are Suffridge polynomials.\end{thm}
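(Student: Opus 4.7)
The plan is to prove the contrapositive: if $f\in(S^*_d)$ is not a Suffridge polynomial, then $f$ is not an extreme point of $\clos\mathrm{conv}(S^*_d)$.

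\textbf{Structural reduction.} The first step is to exploit the normalization $a_d=1/d$, which forces $f'(z)=1+2a_2z+\cdots+z^{d-1}$ to be a monic polynomial of degree $d-1$ with constant term $1$. Writing $f'(z)=\prod_{j=1}^{d-1}(z-\zeta_j)$ gives $\prod|\zeta_j|=1$. Combined with $|\zeta_j|\ge 1$ (since $f\in(S_d)$ has no critical points in $\DD$), this forces $|\zeta_j|=1$ for every $j$. Hence every $f\in(S^*_d)$ has all $d-1$ of its critical points on $\TT$, and $(S^*_d)$ is parametrized via $f'$ by angle configurations $(\theta_1,\ldots,\theta_{d-1})$ subject to $\sum\theta_j\equiv(d-1)\pi\pmod{2\pi}$ together with the univalence condition on $\DD$. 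In this parametrization $(S^*_d)$ is a compact semialgebraic set of real dimension $d-2$.

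\textbf{Characterization of non-Suffridge $f$.} A polynomial $f\in(S^*_d)$ is Suffridge iff (i) all $\theta_j$ are distinct, and (ii) the boundary $f(\TT)$ has the maximum number $d-2$ of double points. If $f$ is not Suffridge, at least one of these fails, leaving slack: either an angle coincidence that can be split, or some double-point conditions that hold strictly.

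\textbf{Construction of perturbations.} In either failure case one can build a nontrivial real-analytic family $\{f_s\}_{|s|<\varepsilon}\subset(S^*_d)$ with $f_0=f$: if $\theta_i=\theta_j$, take $(\theta_i(s),\theta_j(s))=(\theta_i+s,\theta_j-s)$ and keep the rest fixed; if all $\theta_j$ are distinct but $f(\TT)$ has $m<d-2$ double points, take any $\theta_j\to\theta_j+s\delta_j$ with $\sum\delta_j=0$ and $(\delta_j)\ne0$. Univalence persists for small $|s|$ by openness of the univalence locus and the strict slack at $f$.

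\textbf{From perturbations to non-extremality.} Both $f_s$ and $f_{-s}$ then lie in $(S^*_d)$ and are distinct for $s\ne0$, so the midpoint $(f_s+f_{-s})/2\in\clos\mathrm{conv}(S^*_d)$. A real-analytic expansion gives $(f_s+f_{-s})/2=f+s^2w(\delta)+O(s^4)$, where $w(\delta)$ is the quadratic correction associated to the first-order direction $\delta\in T_f(S^*_d)$. The plan is to show that as $\delta$ ranges over $T_f(S^*_d)$, the vectors $w(\delta)$ span a subspace transverse to $T_f(S^*_d)$, so that together with first-order tangent directions they fill out a full $(2d-4)$-dimensional neighborhood of $f$ inside the affine hull of $\clos\mathrm{conv}(S^*_d)$. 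Taking convex combinations of such midpoints, one obtains an open neighborhood of $f$ in $\clos\mathrm{conv}(S^*_d)$, which places $f$ in the relative interior and therefore excludes extremality.

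\textbf{Main obstacle.} The non-trivial step is the transversality assertion: the quadratic corrections $w(\delta)$ must span a complement to $T_f(S^*_d)$ inside the ambient $(2d-4)$-dimensional affine hull. This reduces to a rank computation on the Hessian of the map $(\theta_1,\ldots,\theta_{d-1})\mapsto f'=\prod_j(z-e^{i\theta_j})$, carried out in terms of elementary symmetric polynomials. The point is that this rank is full precisely away from the Suffridge locus; on the Suffridge configurations the additional double-point and cusp conditions rigidify the tangent cone and block the transversality. This rank calculation is essentially Suffridge's 1972 computation, and its careful verification (ruling out accidental degeneracies from repeated $e^{i\theta_j}$ or resonant double-point alignments) is the technical heart of the argument.
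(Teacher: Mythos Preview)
Your structural reduction (all $d-1$ critical points on $\TT$, hence $f'$ self-dual in $\Pi_{d-1}$) is correct and matches the paper. After that, there are two genuine gaps.

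\textbf{Univalence does not persist under generic perturbations.} If $f$ has even one double point $f(\zeta^+)=f(\zeta^-)$ on $\TT$, then $f$ lies on the \emph{boundary} of the univalence locus, not in its interior: for a generic direction $\delta$ the two boundary arcs near the double point cross transversally for one sign of $s$, and by the argument principle this destroys univalence in $\DD$. So ``univalence persists for small $|s|$ by openness'' fails exactly when it matters. What is actually needed is a condition on the perturbation ensuring that each double point splits \emph{tangentially}, so that neither sign produces a crossing. The paper imposes precisely this (one real linear constraint per double point, its condition (R2)); since there are $N<d-2$ double points and $d-2$ available directions, a nontrivial admissible direction exists. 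Even then, showing that tangential splitting preserves univalence takes work: the paper uses the constant conformal curvature of $f(\TT)$ to rule out small self-intersecting arcs.

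\textbf{The quadratic-correction route is unnecessary and rests on a wrong dimension count.} Self-duality of $f'$ is a real-\emph{linear} condition on $a_2,\dots,a_{d-1}$, so $(S^*_d)$ sits inside a $(d-2)$-dimensional real affine subspace of coefficient space, not a $(2d-4)$-dimensional one; there is no extra room for your ``transverse quadratic corrections'' to fill. The paper exploits this linearity directly: it perturbs \emph{linearly in coefficients}, $g_\delta=f+\delta r$, with $r'$ self-dual and $r$ satisfying the $N$ tangentiality constraints. Then $f=\tfrac12(g_\delta+g_{-\delta})$ holds \emph{exactly}, and non-extremality is immediate once $g_{\pm\delta}\in(S^*_d)$ is checked. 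No Hessian rank computation enters; what you label ``Suffridge's 1972 computation'' is not the mechanism. Your angle parametrization is nonlinear in the coefficients and obscures this one-line conclusion.

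(A minor point: your ``angle coincidence'' case is vacuous, since for $f\in(S^*_d)$ all zeros of $f'$ are simple.)
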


\subsection{Suffridge polynomials in \texorpdfstring{$(\Sigma_d)$}{Sigma}} 
We need an analogous theory for conformal maps in the exterior unit disc $\Delta$. We denote by $(\Sigma_d)$ the set of functions of the form
$$f(z)=z +\frac{a_1}{z}+\dots +\frac{a_d}{z^d}$$
which are univalent in $\Delta=\widehat\CC\setminus\clos\DD$.

\bigskip
\begin{lemma} If $f\in (\Sigma_d)$, then $\Omega=f(\Delta)$ is an UQD of order $d$  with a single node at $\infty$. The boundary of $\Omega$ has at most $d+1$ cusps and at most $d-2$ double points. \end{lemma}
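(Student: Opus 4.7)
The proof will closely parallel that of Lemma \ref{lem:suff-BQD}, with the roles of $\DD$ and $\Delta$ interchanged. The first step is to observe that every $f\in(\Sigma_d)$ extends automatically to a meromorphic function on $\CC\setminus\{0\}$ with a single pole of order $d$ at the origin, coming from the $a_d/z^d$ term. This allows me to set
$$S(z)=\overline{f\bigl(1/\overline{f^{-1}(z)}\bigr)}\quad(z\in\Omega),\qquad S(z)=\bar z\quad(z\in\partial\Omega),$$
where $\Omega=f(\Delta)$. For $z\in\Omega$ one has $\zeta:=f^{-1}(z)\in\Delta$, so $1/\bar\zeta\in\clos\DD\setminus\{0\}$ (it vanishes only at $z=\infty$); on $\TT$ one has $\zeta=1/\bar\zeta$. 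Hence $S$ is continuous on $\clos\Omega$, meromorphic on $\Omega$, and equals $\bar z$ on $\partial\Omega$, so it is a Schwarz function of $\Omega$.

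The next step is the pole analysis. The only preimage of $0$ under $\zeta\mapsto 1/\bar\zeta$ in $\clos\Delta$ is $\zeta=\infty$, i.e.\ $z=\infty$. Since $f^{-1}(z)\sim z$ at infinity, $1/\overline{f^{-1}(z)}\sim 1/\bar z$, and therefore $f\bigl(1/\overline{f^{-1}(z)}\bigr)\sim a_d\bar z^d$, so $S(z)\sim\overline{a_d}\,z^d$. Thus $S$ has a single pole, at $\infty$, of order exactly $d$. By Lemma \ref{Schwarz-QD}, $\Omega$ is an UQD; writing $S(z)=r_\Omega(z)+\frac{1}{\pi}\int_{\Omega^c}\frac{\dd A(w)}{z-w}$, the integral term is $O(1/z)$ at infinity and holomorphic in $\Omega$, so the pole of $S$ at $\infty$ is inherited entirely by $r_\Omega$. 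Consequently $r_\Omega$ is a polynomial of degree $d$, and $\infty$ is the unique node of $\Omega$, of multiplicity $d$.

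It remains to count the singular boundary points. Cusps correspond to the zeros of $f'$ on $\TT$; multiplying by $z^{d+1}$ yields the polynomial
$$z^{d+1}f'(z)=z^{d+1}-a_1z^{d-1}-2a_2 z^{d-2}-\cdots-da_d$$
of degree $d+1$, which has at most $d+1$ zeros on the unit circle. For double points I plan to invoke the same Hele-Shaw strengthening of the connectivity bound used in the proof of Lemma \ref{lem:suff-BQD} (cf.\ \cite{1stpaper}): for a simply connected UQD of order $d$ with a single node, $\inte(\Omega^c)$ has at most $d-1$ components. A simply connected domain whose boundary has $j$ transverse self-intersections partitions $\inte(\Omega^c)$ into exactly $j+1$ components, which yields $j\le d-2$.

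The only nontrivial step is the last one: one needs to confirm that the Hele-Shaw monotonicity argument of \cite{1stpaper}, originally stated for a BQD with a single node, carries over to the UQD setting, where $\Omega^c$ is compact with no unbounded component. I expect this transfer to be routine, since the area-flow argument is local near the boundary and insensitive to whether the complement is compact or not.
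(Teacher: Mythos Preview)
Your proposal is correct and follows essentially the same route as the paper. The paper's proof is terser: it writes $S(f(z))=\overline{f(1/\bar z)}=1/z+\overline{a_1}z+\cdots+\overline{a_d}z^d$ to read off the unique pole of order $d$ at infinity, counts cusps via the $d+1$ zeros of $f'$, and for double points simply cites \eqref{eq-thma2} (with $n=1$) to bound the components of $(\clos\Omega)^c$ by $d-1$. Your explicit flagging of the Hele-Shaw strengthening needed to pass from $\conn\Omega$ to $\#\{\text{components of }\inte(\Omega^c)\}$ is warranted---the paper's proof leaves this step implicit, just as it did more explicitly in Lemma~\ref{lem:suff-BQD}---and your expectation that the argument transfers verbatim to the UQD setting is correct.
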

\begin{proof}
The proof is exactly parallel to that of Lemma \ref{lem:suff-BQD}.
Using the relation
\begin{equation}
S(f(z))=\overline{f(1/\overline z)}
=\frac{1}{z} + \overline a_1  z +\dots +\overline{a_d}^d  z^d,
\end{equation}
we see that the only pole of the Schwarz function is at the infinity and the multiplicity of the pole is $d$. This means that $\Omega$ is a UQD of order $d$ with a single node at $\infty$.  The boundary can have at most $d+1$ cusps because $f'(z)$ can have at most $d+1$ zeroes.  The number of double points must be {\em at least one} less than the number of components in $(\clos\Omega)^c$, which is bounded by $d-1$ according to \eqref{eq-thma2}.     
\end{proof}

\begin{defn} $f\in (\Sigma_d)$ is a {\em Suffridge polynomial} if the quadrature domain $f(\DD)$ has  $d+1$ cusps and $d-2$ double points.   The curve $f(\TT)$ is a {\em Suffridge curve}.
\end{defn}

{\bf Example}.
\begin{description}
\item[(a)] $d=2$: The only (up to rotation) Suffridge polynomial in $(\Sigma_2)$ is
$$f(z)=z-\frac1{2z^2}.$$
The corresponding curve is the deltoid, see Figure \ref{fig:card-in-deltoid}.
\item[(b)] $d=3$: A Talbot's curve \cite{Talbot} is given by the Suffridge polynomial:
$$f(z)=z+\frac2{3z}-\frac1{3z^3}.$$
\item[(c)] $d=4$: 
$$f(z)=z-\frac5{8z}-\frac5{16z^2}-\frac1{4z^4}.$$
\item[(d)] $d=5$: The curve shown in Figure \ref{fig:UQD} is given by the Suffridge polynomial
$$f(z)=z+\frac{2\sqrt2}{5z^2}-\frac1{5z^5}.$$
\item[(e)] The curve for $d=7$ in Figure \ref{fig:UQD} and the two curves in \ref{fig:UQD-num} are obtained numerically. 
\end{description}
\bigskip

\begin{figure}
\begin{center}
\begin{tikzpicture}
\begin{scope}[yshift=-3.5cm,xshift=-0.2cm]
\node{\includegraphics[width=3.3cm]{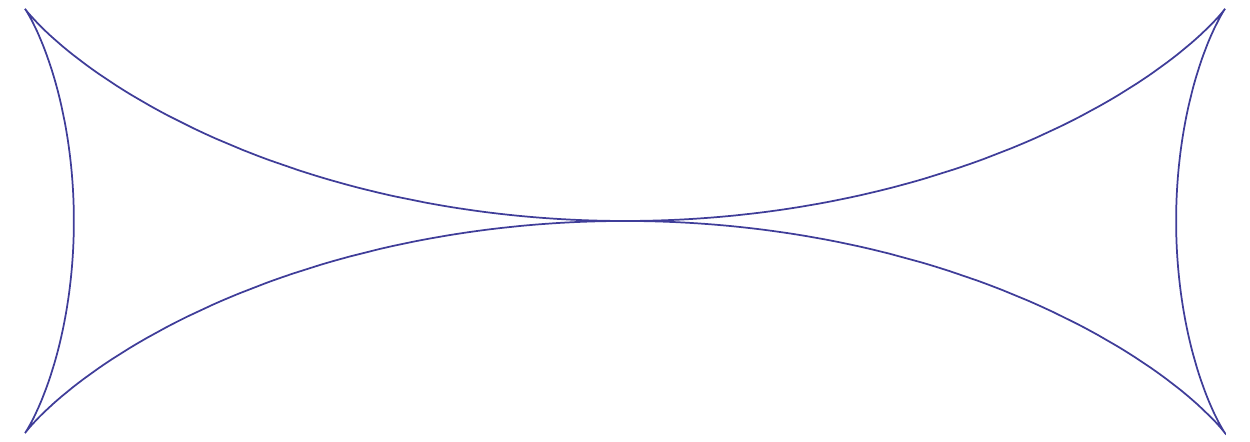}};
\end{scope}
\begin{scope}[yshift=-4.1cm,xshift=-0.2cm]
\node{$d=3$};
\end{scope}
\begin{scope}[yshift=-5.9cm]
\node{\includegraphics[width=4.5cm]{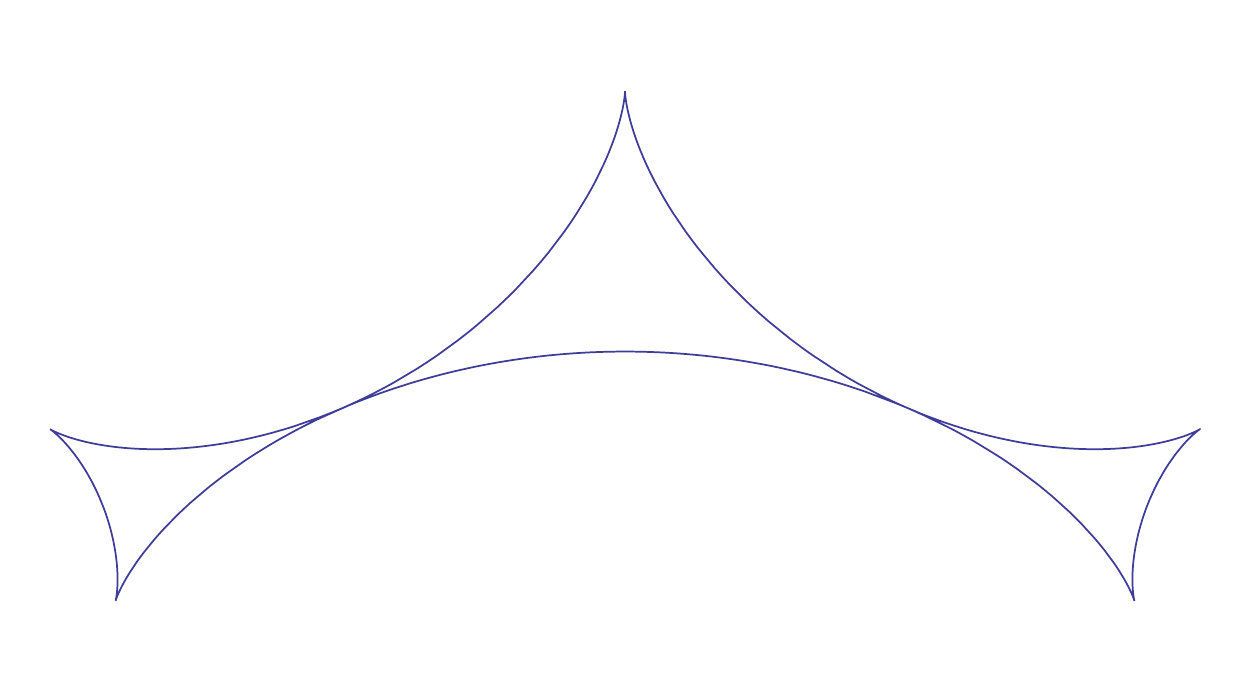}};
\end{scope}
\begin{scope}[yshift=-6.8cm]
\node{$d=4$};
\end{scope}
\begin{scope}[xshift=5.4cm,yshift=-4.5cm]
\node{\includegraphics[width=3.6cm]{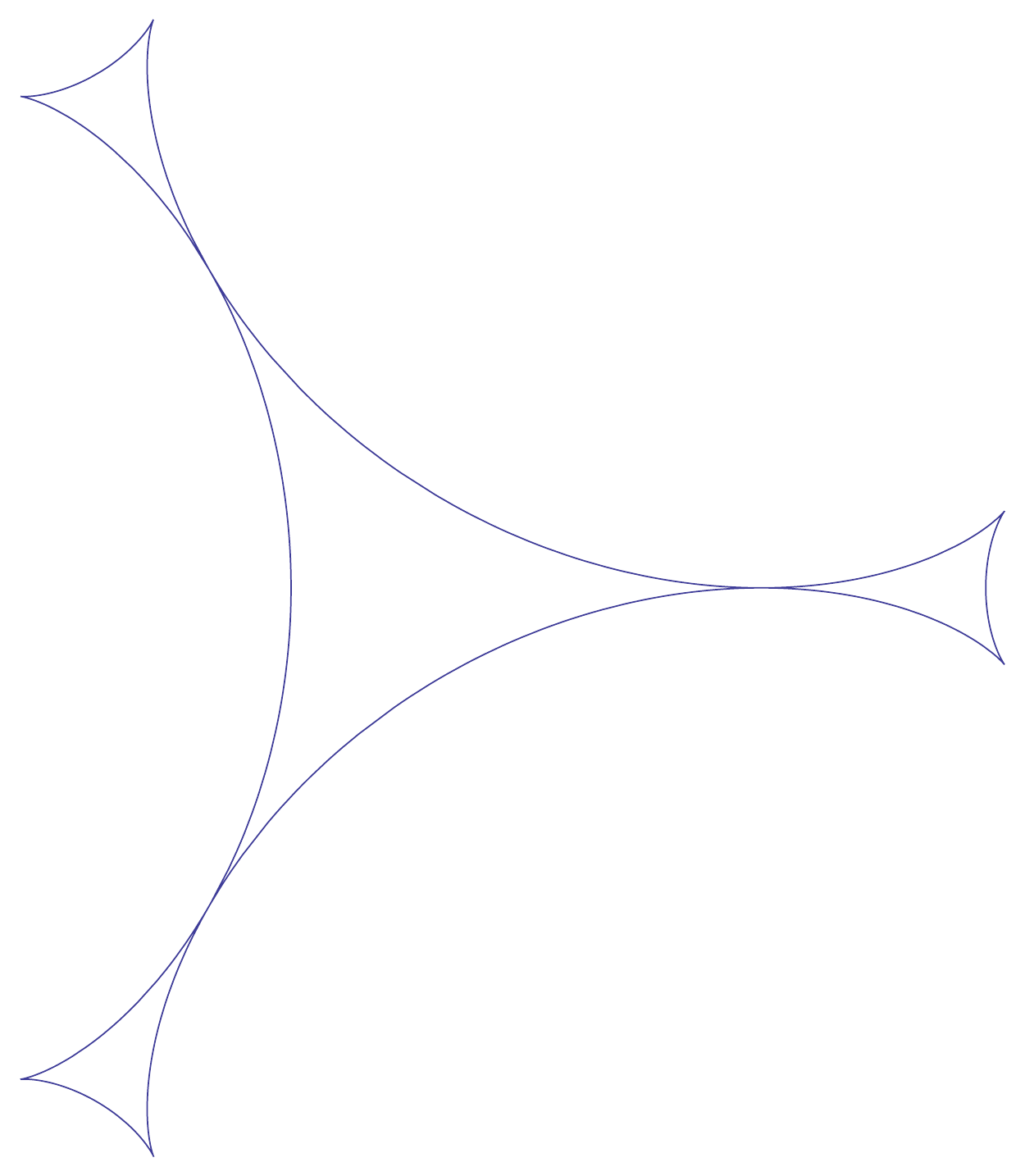}\!\!\!\!\!\!\!\!\!\!\!\!\!\!\!\!\!\!$d=5$};
\end{scope}
\begin{scope}[xshift=11cm,yshift=-4.5cm]
\node{\includegraphics[width=4.5cm]{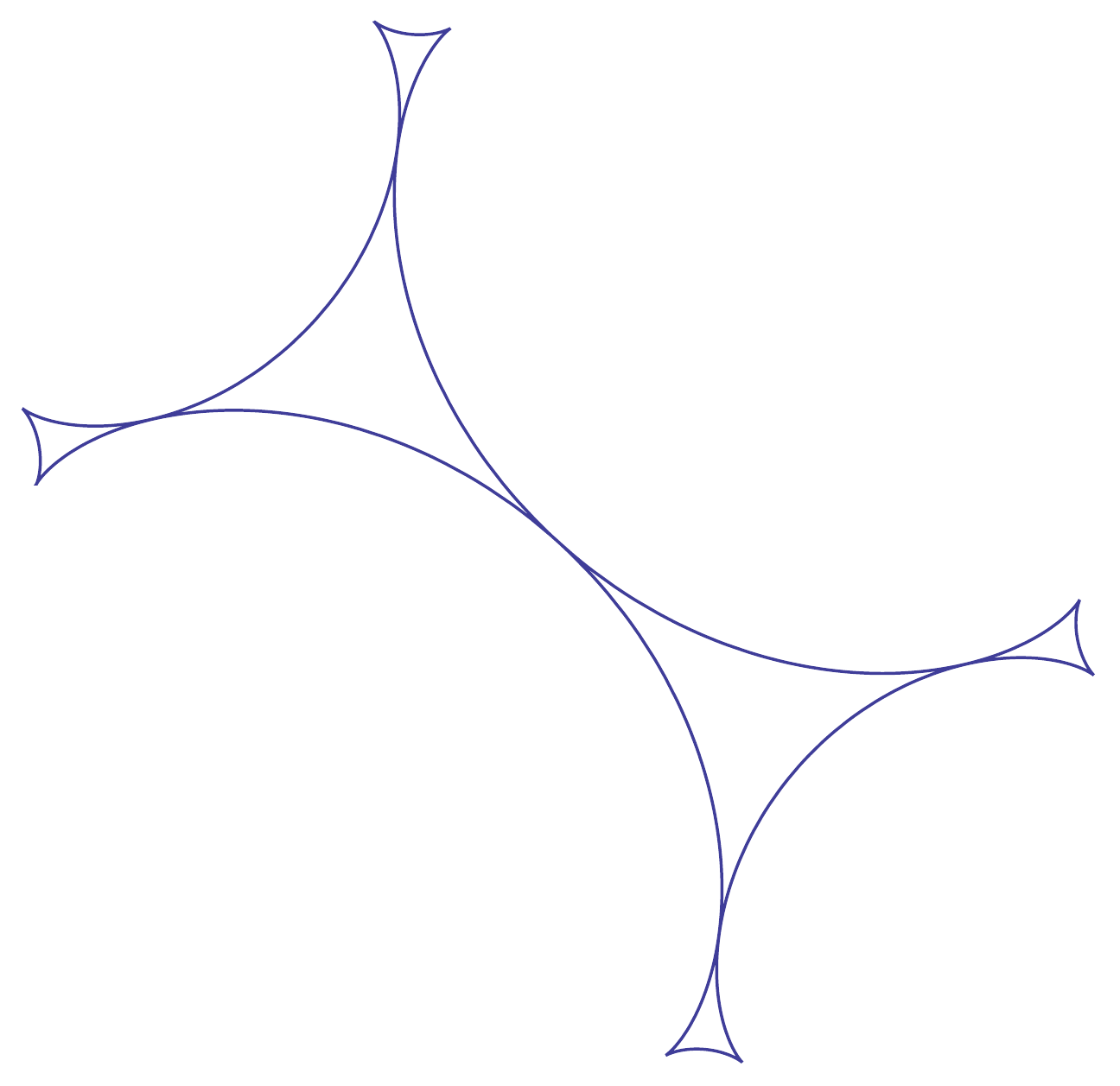}\!\!\!\!\!\!\!\!\!$d=7$};
\end{scope}
\end{tikzpicture}
\end{center}
\caption{\label{fig:UQD} Suffridge curves for $d=3,4,5$ and $7$}
\end{figure}

\begin{figure}
\begin{center}
\includegraphics[width=5.8cm]{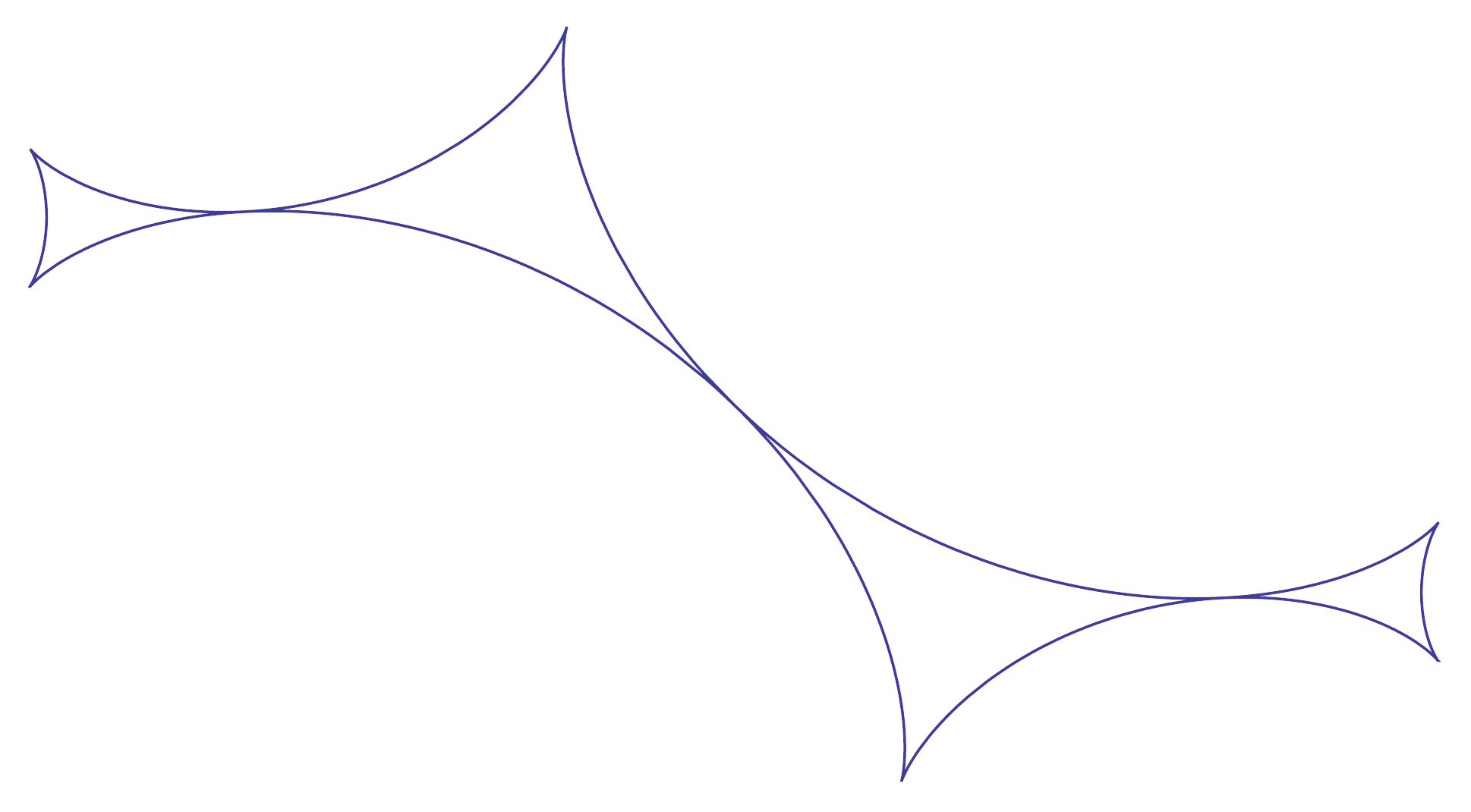}
\hspace{1cm}
\includegraphics[width=4cm]{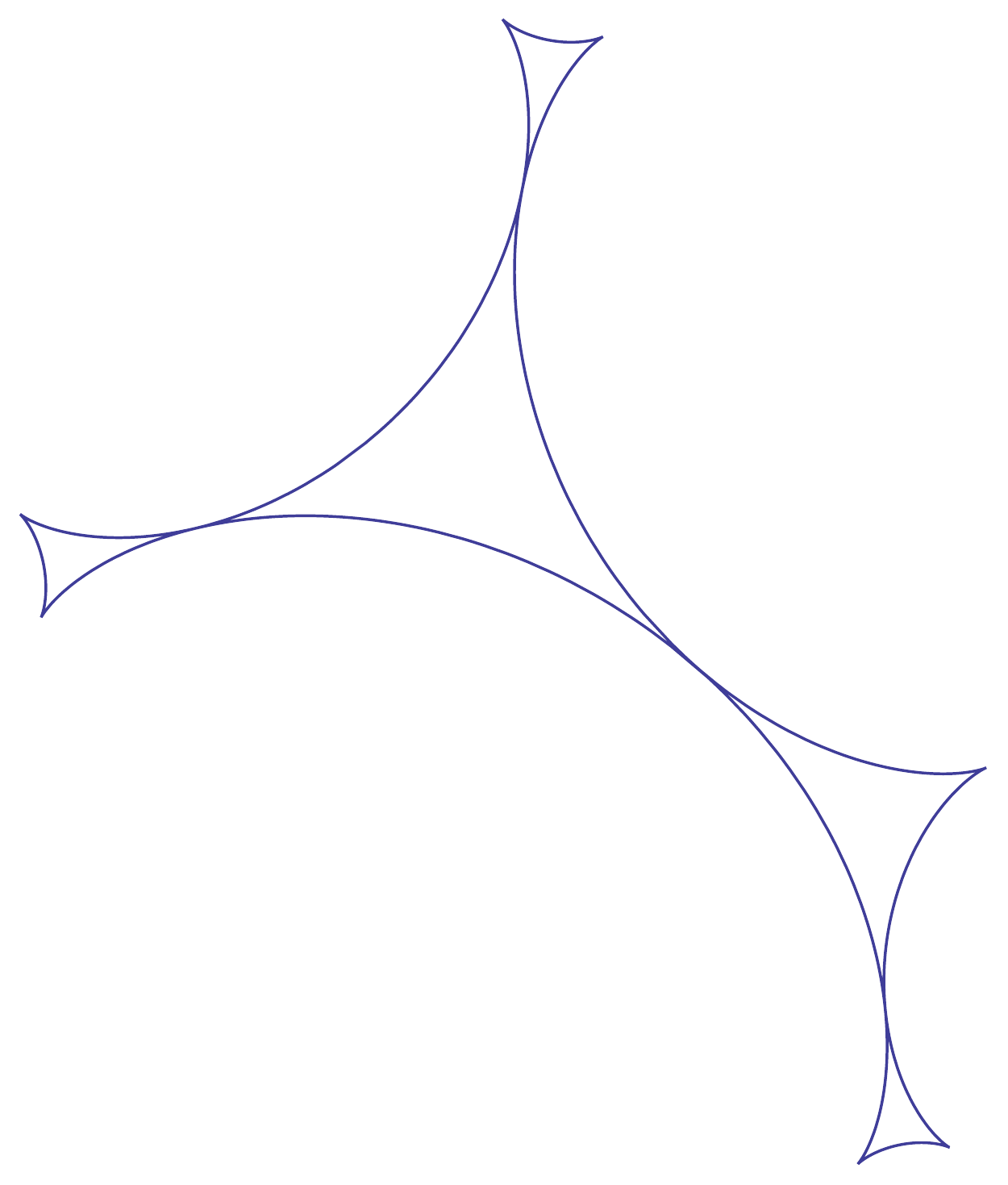}
\end{center}
\caption{\label{fig:UQD-num} Suffridge curves for $d=5$ and $d=6$}
\end{figure}

We will prove the following theorem in Section \ref{sec:extreme}. Denote
$$(\Sigma^*_d)=\{f\in (\Sigma_d):~ a_d=-1/d\}.$$

\begin{thm}\label{thm:Sigma} Extreme points of $(\Sigma^*_d)$ are Suffridge polynomials.\end{thm}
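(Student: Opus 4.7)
The plan is to mirror the variational argument behind Theorem~\ref{thm:Sd} (an adaptation of Suffridge's technique), now applied to the class $(\Sigma^*_d)$.

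First I would verify that $(\Sigma^*_d)$ is compact. The classical area theorem $\sum_{k=1}^d k|a_k|^2\le 1$ gives boundedness of the coefficients, and Hurwitz's theorem gives closedness (the normalization $a_d=-1/d$ rules out degeneration to a constant limit). Applying Krein--Milman to the closed convex hull of $(\Sigma^*_d)$ in the ambient finite-dimensional real space, and Milman's theorem, extreme points exist and lie in $(\Sigma^*_d)$ itself. Let $f$ be one such extreme point.

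A crucial preliminary observation is that \emph{every} $f\in(\Sigma^*_d)$ already has all $d+1$ zeros $\zeta_1,\ldots,\zeta_{d+1}$ of $f'$ on $\TT$. Indeed, the polynomial $p(z)=z^{d+1}f'(z)$ has degree $d+1$, leading coefficient $1$ and constant term $-d\,a_d=1$, so by Vieta the product of its roots has modulus $d|a_d|=1$. Univalence of $f$ in $\Delta$ forces every root to lie in $\clos\DD$, so equality in $\prod|\zeta_j|\le 1$ forces $|\zeta_j|=1$ for each $j$. Thus the cusp count is automatically $c=d+1$ for every $f\in(\Sigma^*_d)$, and only the number $m$ of double points is in question.

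I would then run the variational argument. Admissible perturbations are $h(z)=\sum_{k=1}^{d-1} c_k/z^k$, forming a real vector space $V_0$ of dimension $2(d-1)$. For $f\pm\varepsilon h$ to remain in $(\Sigma^*_d)$ for small $\varepsilon>0$, each cusp $\zeta_j$ imposes the real linear condition $\mathrm{Re}\bigl(\bar\zeta_j\,h'(\zeta_j)/f''(\zeta_j)\bigr)=0$ on $h$, so that the perturbed critical point stays on $\TT$ to first order; each tangential double point $(\eta_j,\eta'_j)$ imposes another real linear condition on $h$ for the touch to persist, since otherwise it would break into a transverse self-intersection on one side of $\varepsilon=0$, violating univalence in $\Delta$. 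A routine second-order check, parallel to the $(S^*_d)$ case, confirms that these first-order conditions are indeed sufficient to produce an actual perturbation. The dimension count then closes the argument: the invariants $e_1=0$ and $e_{d+1}=(-1)^{d+1}$ of $p$ are preserved by any $h\in V_0$, yielding automatic first-order relations $\sum\delta\zeta_j=0$ and $\sum\delta\zeta_j/\zeta_j=0$ on the perturbed critical points; since $\bar\zeta_j=1/\zeta_j$ on $\TT$, the real part of the second relation is exactly $\sum_j\mathrm{Re}(\bar\zeta_j\,\delta\zeta_j)$, i.e.\ the sum of the $d+1$ cusp conditions. This is one linear dependency, so the effective rank of the cusp constraints drops from $d+1$ to $d$. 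Extremality then requires $d+m\ge\dim V_0=2(d-1)$, so $m\ge d-2$; combined with $m\le d-2$, we conclude $m=d-2$, and $f$ is a Suffridge polynomial. The main obstacle, in my view, is the second-order verification: confirming the first-order linear conditions genuinely extend to full perturbations, especially handling degenerate configurations where cusps coalesce (higher-multiplicity zeros of $f'$) or where a double point coincides with a cusp, which requires the adjustments already present in Suffridge's original argument.
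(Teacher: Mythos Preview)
Your overall strategy---Krein--Milman plus showing that a non-Suffridge $f$ admits a two-sided linear perturbation inside $(\Sigma^*_d)$---matches the paper's. The preliminary observation that every $f\in(\Sigma^*_d)$ already has all $d+1$ critical points on $\TT$ is exactly Lemma~\ref{lem:d1d1}(ii), and those zeros are automatically simple, so the degenerate cusp-coalescence scenario you worry about at the end does not occur.

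The gap is your ``routine second-order check.'' The paper does \emph{not} work in the full $2(d-1)$-dimensional space $V_0$ with first-order cusp constraints. Instead it restricts from the outset to the real $(d-2)$-dimensional linear subspace of perturbations $r(z)=\sum_{j=1}^{d-1}a_j z^{-j}$ satisfying (\textbf{R1}): $r'(1/z)$ is self-dual in $\Pi_{d+1}$. Self-duality is a \emph{linear} condition on the coefficients, so $(f+\delta r)'(1/z)$ is self-dual for \emph{every} real $\delta$; since its roots are then invariant under $z\mapsto 1/\bar z$ and start out simple on $\TT$, they remain exactly on $\TT$ for all small $\delta$ (Lemma~\ref{lem:ZF}). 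Your first-order condition $\mathrm{Re}\bigl(\bar\zeta_j\,h'(\zeta_j)/f''(\zeta_j)\bigr)=0$ does not achieve this: a generic $h$ in your kernel moves critical points off $\TT$ at order $\varepsilon^2$, and no correction is available within a \emph{linear} perturbation $f\pm\varepsilon h$. Equally crucial, the actual univalence proof (Lemmas~\ref{lem:loc-simple} and~\ref{lem:global-simple}) is not a perturbative check at all: it uses that $g_\delta(\TT)$ has \emph{constant conformal curvature} $(1-d)/2$ away from cusps (Corollary~\ref{cor:kappa}(ii)), a rigidity that excludes small self-intersections and that depends on self-duality of $g'_\delta$. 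An arbitrary $h\in V_0$ destroys constant curvature, so the paper's geometric argument would not apply. In short, the missing idea is to perturb only within the self-dual subspace; once you do, the cusp conditions disappear as separate constraints (your count $2(d-1)-d=d-2$ collapses to the paper's $d-2$ from the start), only the $N<d-2$ double-point conditions (\textbf{R2}) remain, and the constant-curvature structure survives to carry the univalence argument.
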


\subsection{Constant conformal curvature}

Let $\Pi_k$ be the space of polynomials of $\deg\leq k$.
An {\em involution in $\Pi_k$} is defined by
\begin{equation}\nonumber
p(z)\mapsto	p^*(z):=z^{k}\,\overline{p(1/\overline z)}.
\end{equation}
A polynomial $p$ is {\em self-dual in $\Pi_k$} if $p=p^*$.  

\bigskip
\begin{lemma}\label{lem:d1d1} (i) If $f\in (S^*_d)$ then $f'$ is self-dual in $\Pi_{d-1}$.
\\
(ii)
If $f\in (\Sigma^*_d)$ then $f'(1/z)$ is self-dual in $\Pi_{d+1}$.
\end{lemma}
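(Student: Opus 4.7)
The plan is to treat both parts by the same mechanism: show that the polynomial in question has all of its roots on the unit circle, and then observe that such a polynomial, once its leading and constant coefficients are normalized to $1$, is automatically self-dual—the phase working out via Vieta's formula for the product of the roots.

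For part (i), I compute $f'(z) = 1 + 2a_2 z + \cdots + (d-1)a_{d-1} z^{d-2} + z^{d-1}$, noting that the normalization $a_d = 1/d$ gives leading coefficient $d a_d = 1$ while the constant term $f'(0) = 1$ is automatic. Univalence of $f$ in $\mathbb D$ forces $f'(z) \neq 0$ for $z \in \mathbb D$, so every root $\zeta_j$ of $f'$ satisfies $|\zeta_j| \geq 1$; but the product formula gives $\prod_j |\zeta_j| = 1$, pinning all the $|\zeta_j|$ to $1$. A direct factor-by-factor computation using $\bar\zeta_j = 1/\zeta_j$ then yields
\[
z^{d-1}\,\overline{f'(1/\bar z)} \;=\; \Bigl((-1)^{d-1}\,\overline{\textstyle\prod_j \zeta_j}\Bigr)\,f'(z),
\]
and the scalar prefactor equals $1$ because Vieta's formula gives $\prod_j \zeta_j = (-1)^{d-1}$.

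For part (ii) the natural object is $P(z) := z^{d+1}f'(z)$. Writing this out and using $a_d = -1/d$ shows that $P$ is a monic polynomial of degree $d+1$ with constant term $-d a_d = 1$. Univalence of $f$ on $\Delta$ implies $f' \neq 0$ on $\Delta$, and since $P(0) = 1 \neq 0$ the roots of $P$ coincide with the nonzero roots of $f'$, so they all lie in $\overline{\mathbb D}$; the product-of-moduli constraint again pushes them onto $\mathbb T$, and the identical factor-pairing argument shows $P$ is self-dual in $\Pi_{d+1}$. To pass from $P$ to $f'(1/z)$, I use the identity $f'(1/z) = z^{d+1}P(1/z)$; expanding $P(z) = \sum c_j z^j$ shows that the coefficient of $z^k$ in $f'(1/z)$ equals $c_{d+1-k}$, so the self-duality conditions $c_j = \bar c_{d+1-j}$ for $P$ translate verbatim into the self-duality conditions for $f'(1/z)$. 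The only delicate point in the whole proof is the sign bookkeeping—making sure that $(-1)^{d\pm 1}\cdot\overline{\prod_j \zeta_j}$ collapses to exactly $+1$—but this is forced by Vieta and is really the whole content of the argument.
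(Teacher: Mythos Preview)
Your argument is correct and follows essentially the same route as the paper: in both parts you use univalence to exclude zeros of $f'$ in the relevant region, then the Vieta constraint $\bigl|\prod \zeta_j\bigr|=1$ (coming from the normalization of the leading and constant coefficients) forces all roots onto $\mathbb{T}$, after which self-duality is a direct factorwise computation using $\bar\zeta_j=1/\zeta_j$. The only cosmetic difference is that for part~(ii) you pass through the auxiliary polynomial $P(z)=z^{d+1}f'(z)$ and then transfer self-duality to $f'(1/z)$, whereas the paper argues directly with the critical points of $f$; the two are the same computation.
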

\begin{proof}
(i) If $f\in (S_d^*)$, then the product of its $d-1$ critical points (zeros of $f'$) is $(-1)^{d-1}$ because $f'(z)=1+\cdots+z^{d-1}$.  This means that, if there is a critical point outside $\clos\DD$, there must be a critical point in $\DD$, which is impossible because $f$ is univalent in $\DD$.   Therefore, all the critical points must be on $\TT$.   Let $\{\zeta_j|j=1,\cdots,d-1\}$ be the critical points on $\TT$.  We have
$$  (f')^*(z)= z^{d-1}\overline{ (f')(1/\overline z)} = z^{d-1} \prod_{j=1}^{d-1}\overline{(1/\overline z- \zeta_j)}=\frac{(-1)^{d-1}}{\prod_{j=1}^{d-1}\zeta_j}\prod_{j=1}^{d-1}(z-\zeta_j)=f'(z), $$
where we used $1/\overline{\zeta_j}=\zeta_j$.
This implies that $f'$ is  {\em self-dual in $\Pi_{d-1}$}.
\\
(ii) If $f\in(\Sigma_d^*)$, then the product of its $d+1$ critical points is $(-1)^{d+1}$.  For $f$ to be univalent, all the critical points must be on $\TT$.  This makes the polynomial $f'(1/z)$ {\em self-dual in $\Pi_{d+1}$}.
\end{proof}
\begin{remark}
The proof also shows that the curve $f(\TT)$ has the maximal number of cusps, which correspond to the critical points of $f$. Note that all zeros of $f'$ are {\em simple} because $f$ is univalent.
\end{remark}
\bigskip
\begin{lemma}\label{lem:kappa}
(i) Let $f$ be a polynomial such that $f'$ be self-dual in $\Pi_k$.  Consider the curve $f(\TT)$ with parametrization $t\mapsto f(\ee^{it})$. The unit tangent vector $\tau$ of $f(\TT)$ at $f(\zeta)$, $\zeta\in\TT$, is given by the formula
$$\tau(\zeta)={\rm sgn}(A(\zeta))\, i\zeta^{(k+2)/2},\quad \zeta\in\TT\setminus\{\text{critical points of $f$}\},$$   
where $A:\TT\to\RR$ is a continuous function which changes its sign exactly at each critical point of $f$.
\\
(ii) Similarly, let $f$ be a rational function such that $f'(1/z)$ is a polynomial which is self-dual in $\Pi_k$. 
Then the unit tangent vector $\tau$ of $f(\TT)$ is given by $$\tau(\zeta)={\rm sgn} (A(\zeta))\, i\zeta^{(-k+2)/2},\quad \zeta\in\TT\setminus\{\text{critical points of $f$}\},$$
where $A:\TT\to\RR$ is a continuous function which changes its sign exactly at each critical point of $f$.
\end{lemma}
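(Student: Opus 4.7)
The plan is to express $\tau$ through the parametrization $t\mapsto f(e^{it})$ and then insert the self-duality identity to extract $\zeta^{(k+2)/2}$ as an explicit phase factor. Away from critical points,
\[
\tau(\zeta) \;=\; \frac{(d/dt)f(e^{it})}{|(d/dt)f(e^{it})|} \;=\; \frac{i\zeta\, f'(\zeta)}{|f'(\zeta)|},\qquad \zeta = e^{it}.
\]

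For part (i), self-duality $f'(z)=z^{k}\overline{f'(1/\bar z)}$, specialized to $\zeta\in\TT$ (where $1/\bar\zeta=\zeta$), gives $\overline{f'(\zeta)}=\zeta^{-k}f'(\zeta)$. I would define
\[
A(\zeta) \;:=\; e^{-ikt/2}\, f'(e^{it}).
\]
A one-line conjugation using the self-duality identity yields $\overline{A(\zeta)}=A(\zeta)$, so $A$ is real-valued (and real-analytic in $t$). Writing $f'(\zeta) = A(\zeta)\,\zeta^{k/2}$ and substituting into the formula for $\tau$,
\[
\tau(\zeta) \;=\; \frac{i\zeta\cdot A(\zeta)\,\zeta^{k/2}}{|A(\zeta)|} \;=\; \mathrm{sgn}(A(\zeta))\, i\zeta^{(k+2)/2},
\]
which is the claimed formula. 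The zeros of $A$ on $\TT$ coincide with the critical points of $f$ on $\TT$, and $A$ changes sign across each simple such zero (the relevant case in view of the remark following Lemma~\ref{lem:d1d1}).

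Part (ii) is completely parallel. Self-duality of the polynomial $g(z)=f'(1/z)$ in $\Pi_k$ yields, after the substitution $z=1/\zeta$, the relation $\overline{f'(\zeta)}=\zeta^{k}f'(\zeta)$ on $\TT$. Defining $A(\zeta):=e^{ikt/2}f'(e^{it})$ and performing the analogous computation produces $\tau(\zeta)=\mathrm{sgn}(A(\zeta))\, i\zeta^{(-k+2)/2}$.

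The one real obstacle is a bookkeeping subtlety when $k$ is odd: in that case neither $A$ nor $\zeta^{(k+2)/2}$ is literally single-valued on $\TT$, since each picks up a sign under $t\mapsto t+2\pi$. Their product $\tau$ is of course single-valued, which reduces to the parity statement that the number of critical points of $f$ on $\TT$ is congruent to $k\pmod2$ --- a consequence of the $z\mapsto 1/\bar z$ symmetry of the zero set of the self-dual polynomial. In practice one either works throughout with the parameter $t\in\RR$ as above, or picks a branch of $\zeta^{k/2}$ on $\TT\setminus\{-1\}$ and absorbs the compensating $\pm 1$ jump into $A$; this issue aside, every step is a direct substitution.
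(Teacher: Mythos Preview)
Your proof is correct and follows essentially the same route as the paper: define $A(\zeta)=\zeta^{\mp k/2}f'(\zeta)$, use self-duality to see that $A$ is real-valued on $\TT$, and then read off $\tau$ from $i\zeta f'(\zeta)/|f'(\zeta)|$. Your explicit discussion of the branch ambiguity when $k$ is odd is a point the paper leaves implicit, but otherwise the arguments coincide.
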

\begin{proof} For the case (i), the self-duality of $f'$ means that, for $\zeta\in\TT$,
\begin{equation*}
2i\,{\rm Im}\Big( \zeta^{-k/2}f'(\zeta)\Big)= \zeta^{-k/2}f'(\zeta) - \zeta^{k/2}\overline{f'(1/\overline\zeta)}=0.
\end{equation*}
This means that $f'(\zeta)=A(\zeta)\, \zeta^{k/2}$ for some continuous function $A:\TT\to\RR$.  Also, $A(\zeta)$ changes sign at each zero of $f'$ because the zeros are simple.
The tangent vector is given by 
\begin{equation*}
  \frac{\dd f(\ee^{i t})}{\dd t}=i \zeta f'(\zeta)\Big|_{\zeta=\ee^{i t}}=i A(\zeta) \,\zeta^{(k+2)/2},
\end{equation*}
and the {\em unit} tangent vector is as stated.

Similarly, for the case (ii), we have that $f'(1/\zeta)=A(\zeta)\,\zeta^{k/2}$ for some continuous function $A:\TT\to\RR$.  Then the unit tangent vector is given by
\begin{equation*}
\tau=\frac{1}{|A(\ee^{it})|}  \frac{\dd f(\ee^{i t})}{\dd t}=\frac{1}{|A(\zeta)|}i \zeta f'(\zeta)\Big|_{\zeta=\ee^{i t}}=i \frac{A(\zeta)}{|A(\zeta)|} \,\zeta^{(-k+2)/2}.
\end{equation*}
\vspace{-1.3cm}

\end{proof}

Recall that the {\em curvature} $k$ of the curve $t\mapsto f(\ee^{it})$ is given by the function
$$t\mapsto\frac{1}{|f'(\ee^{it})|}\frac{\dd\arg \tau(\ee^{it})}{\dd t}.$$
The curvature measures how fast the angle of the tangent vector changes as one moves along the curve with unit velocity.      The curvature is positive when the curve turns left of the tangent. 

In our case, it will be convenient to define {\it conformal curvature} at $\zeta=\ee^{it}\in \TT$ by
\begin{equation*}
\kappa(\zeta)=\frac{\dd \arg \tau(\ee^{i t})}{\dd t}.
\end{equation*}
This is the angular velocity of the tangent vector as one moves along the curve with the velocity $|f'|$. 
Note that $\kappa$ has the same sign as the usual curvature.

The last two lemmas (Lemma \ref{lem:d1d1} and Lemma \ref{lem:kappa}) have the following corollary, which says that Suffridge curves have {\em constant} conformal curvature (except for cusps).
\bigskip
\begin{cor}\label{cor:kappa} (i) If $f\in (S^*_d)$ then 
\begin{equation}\label{eq:kappa1}
\kappa(\zeta)=\frac{1+d}2,\qquad \zeta\in \TT\setminus\{\text{critical points of $f$}\}.\end{equation}
(ii)
If $f\in (\Sigma^*_d)$ then 
\begin{equation}\label{eq:kappa2}
  \kappa(\zeta)=\frac{1-d}2,\qquad \zeta\in \TT\setminus\{\text{critical points of $f$}\}.
\end{equation}
\end{cor}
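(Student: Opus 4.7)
The plan is to assemble the two previous lemmas and read off $\kappa$ by differentiating $\arg \tau(e^{it})$ with respect to $t$. By Lemma \ref{lem:d1d1}(i), a polynomial $f\in (S^*_d)$ has $f'$ self-dual in $\Pi_{k}$ with $k=d-1$, so Lemma \ref{lem:kappa}(i) applies and gives
$$\tau(e^{it})=\operatorname{sgn}(A(e^{it}))\, i\, e^{i(k+2)t/2}$$
on the complement of the critical points. Analogously, Lemma \ref{lem:d1d1}(ii) gives $k=d+1$ for $f\in(\Sigma_d^*)$, and Lemma \ref{lem:kappa}(ii) provides
$$\tau(e^{it})=\operatorname{sgn}(A(e^{it}))\, i\, e^{i(-k+2)t/2}.$$

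Next, I would fix any open arc $I\subset\TT$ disjoint from the critical points of $f$. On $I$ the continuous real-valued function $A$ has constant sign, so $\operatorname{sgn}(A(e^{it}))$ is a constant $\varepsilon\in\{\pm1\}$ on $I$. Consequently, in case (i),
$$\arg \tau(e^{it})=\frac{\pi}{2}+\frac{k+2}{2}\,t+\pi\,\mathbf{1}_{\{\varepsilon=-1\}}\pmod{2\pi},$$
which depends on $t$ only through the linear term. Differentiating gives
$$\kappa(e^{it})=\frac{\dd \arg \tau(e^{it})}{\dd t}=\frac{k+2}{2}=\frac{d+1}{2},$$
establishing \eqref{eq:kappa1}. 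The identical argument in case (ii) yields $\kappa=(-k+2)/2=(1-d)/2$, giving \eqref{eq:kappa2}.

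There is essentially no obstacle here: all the structural work has been done in Lemmas \ref{lem:d1d1} and \ref{lem:kappa}. The only point to be slightly careful about is that $\operatorname{sgn}(A)$ jumps at critical points, but since the corollary explicitly excludes those points and the arcs between them are the natural domains of continuity, the piecewise constant sign contributes nothing to the $t$-derivative. Thus the corollary reduces to a one-line calculation once $k$ is identified in each case.
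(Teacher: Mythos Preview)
Your proof is correct and follows exactly the approach intended by the paper: the corollary is stated there as an immediate consequence of Lemmas \ref{lem:d1d1} and \ref{lem:kappa}, with no separate proof given. You have simply made explicit the identification $k=d-1$ (resp.\ $k=d+1$) and the one-line differentiation of $\arg\tau$, which is precisely what the paper leaves to the reader.
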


\subsection{Geometry of Suffridge curves}

Since $\kappa$ is the rate of change for the angle of the tangent, and since the angle jumps by $\pi$ (i.e. the tangent vector changes the direction by $\pi$ or, equivalently, $A(\zeta)$ in Lemma \ref{lem:kappa} changes the sign) at the cusps, we have
$$\int_0^{2\pi}\kappa(\ee^{\ii t})\,\dd t + \pi \cdot \#\{\text{cusps}\}=0 \mod 2\pi.$$

Similarly, given a double point, $f(\ee^{\ii t^+})=f(\ee^{\ii t^-})$ for $t^-<t^+<t^-+2\pi$, we have
$$ \int_{t^-}^{t^+}\kappa(\ee^{\ii t})\,\dd t  + \pi\cdot \#\{\text{cusps in $(t^-,t^+)$}\} = \pi \mod 2\pi,$$
because the two tangent vectors at the double point have opposite directions.

Combining the last equation with Corrollary \ref{cor:kappa} we get the following useful relation for a double point:
\begin{equation}\label{eq:one}
 \ee^{\ii (d\pm 1)(t^+-t^-)}=1,\quad \text{$+$ for $(S^*_d)$ and $-$ for $(\Sigma^*_d)$.}
 \end{equation}

\bigskip
\begin{lemma}\label{lem:3} Let $\Gamma\subset\CC$ be a Jordan curve with counterclockwise orientation. Suppose $\Gamma$ is smooth except for $N$ cusps and has negative curvature at all regular points. Then $N\ge 3$. \end{lemma}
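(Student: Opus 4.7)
My plan is to apply the total-turning formula (Hopf's Umlaufsatz, in its piecewise smooth version) to $\Gamma$. For a counterclockwise oriented Jordan curve with finitely many singular points, this formula reads
$$\int_{\Gamma_{\rm reg}}\kappa\,ds+\sum_{j=1}^{N}\alpha_j=2\pi,$$
where $\Gamma_{\rm reg}$ denotes the regular (smooth) part of $\Gamma$ and $\alpha_j$ is the signed exterior angle at the $j$-th singular point. At a cusp the unit tangent simply reverses direction, so $\alpha_j\in\{+\pi,-\pi\}$; in particular $\alpha_j\le\pi$.

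The argument from there is a one-line inequality. Since $N$ is finite, $\Gamma_{\rm reg}$ has positive arc length, and the hypothesis $\kappa<0$ on $\Gamma_{\rm reg}$ forces
$$\int_{\Gamma_{\rm reg}}\kappa\,ds<0.$$
Rearranging the Umlaufsatz gives $\sum_{j=1}^{N}\alpha_j>2\pi$, and combining with $\alpha_j\le\pi$ yields $N\pi>2\pi$, i.e.\ $N\ge 3$.

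The only place where real care is needed is the cusp contribution in Hopf's formula, because the unit tangent fails to be continuous at a cusp. I would justify this by a standard smoothing: replace each cusp by a short smooth arc connecting the two tangent directions to obtain a smooth Jordan curve $\Gamma_\varepsilon$, apply the classical Umlaufsatz to $\Gamma_\varepsilon$, and let $\varepsilon\to 0$. The total rotation of the tangent along each shrinking arc tends to $\pm\pi$ (the sign depending on which side of the cusp the smoothing is performed), which matches the signed jump of the tangent angle at the cusp and justifies the value $\alpha_j\in\{+\pi,-\pi\}$. This is the only genuinely technical step; everything else is a pure counting inequality.
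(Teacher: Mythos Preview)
Your proof is correct and follows essentially the same route as the paper's own argument: both invoke the total-turning identity for a counterclockwise Jordan curve (the paper obtains it by smoothing near each cusp and applying the classical Umlaufsatz, exactly as you outline), then use negativity of $\int\kappa\,ds$ to force the cusp contributions to exceed $2\pi$. The only cosmetic difference is that the paper asserts each cusp contributes exactly $+\pi$ (arguing that the interior angle at a cusp is zero), whereas you use the weaker bound $\alpha_j\le\pi$; your version is if anything slightly more robust, since the inequality $N\pi\ge\sum\alpha_j>2\pi$ goes through without needing to pin down the sign at each cusp.
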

\begin{proof} 
The Jordan domain bounded by $\Gamma$ has zero angles at the cusp points. Let us approximate $\Gamma$ by a smooth Jordan curve $\Gamma_\epsilon$ so that the curves coincide outside the $\epsilon$-neighborhoods of the cusps.
Note that as $\epsilon$ goes to zerp, the angle of the tangent of $\Gamma_\epsilon$ changes by $+\pi$ across the $\epsilon$-neighborhood around each cusp (note the counterclockwise orientation on the curve).  Since the total variation of the angle of the tangent $\Gamma_\epsilon$ is $2\pi$ and, for the curve $\Gamma$ we get (in the limit)
\begin{equation}\label{eq:fullrotate}
\int_\Gamma k\, \dd s+\pi \#\{\text{cusps}\}=2\pi,
\end{equation}
where $k$ is the usual curvature and $\dd s$ is the arclength.  parametrization in counterclockwise direction.
The lemma follows because the integral in \eqref{eq:fullrotate} is negative. .\end{proof}

\begin{thm}\label{thm:geom2} Let $f$ be a Suffridge polynomial in $(\Sigma_d)$ and let  $U_j$'s, ($1\le j\le d-1$), be the complementary components of ${\CC}\setminus f(\overline {\Delta})$. Then the boundary of each domain $U_j$ has exactly three singular points.\end{thm}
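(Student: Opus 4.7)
The plan is to combine a per-component lower bound from negative conformal curvature with a global count of singular incidences on $\bigcup_j\partial U_j$. After rescaling so that $f\in(\Sigma_d^*)$, Corollary \ref{cor:kappa}(ii) gives constant conformal curvature $(1-d)/2$ along $f(\TT)$, which is strictly negative for $d\ge2$, so the ordinary curvature of $f(\TT)$ is strictly negative at every regular point. Writing $N_j$ for the number of singular points (cusps and double points) on $\partial U_j$, the goal is to show $N_j\ge 3$ for each $j$ and $\sum_j N_j=3(d-1)$; since there are exactly $d-1$ components (the extremal connectivity \eqref{eq-thma2} is attained for a Suffridge $f$), equality in the lower bound is forced component by component.

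First I would establish the lower bound $N_j\ge3$. The parametrization $t\mapsto f(e^{\ii t})$ traces $\partial\Omega$ with $\Omega=f(\Delta)$ on the right, hence traces $\partial U_j$ with $U_j$ on the left, i.e.\ counterclockwise. Because $\Omega$ is simply connected (the univalent image of the simply connected $\Delta$), no double point can have both of its complementary local sectors belonging to the same $U_j$; consequently each $\partial U_j$ is a genuine Jordan curve. The Gauss--Bonnet identity for $U_j$ reads
$$\int_{\partial U_j}k\,\dd s+\sum_i(\pi-\alpha_i)=2\pi,$$
where $\alpha_i\in[0,\pi)$ are the interior angles at the singular boundary points ($\alpha_i=0$ at cusps, $\alpha_i\in(0,\pi)$ at transverse double-point corners). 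Since the integral is negative and each summand is strictly less than $\pi$, we get $N_j\cdot\pi>2\pi$, hence $N_j\ge3$. This is exactly the corner-augmented version of Lemma \ref{lem:3}.

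Next I would carry out the global count. The Suffridge curve $f(\TT)$ has exactly $d+1$ cusps and $d-2$ double points. Each cusp is a boundary point of a unique $U_j$ and contributes $1$ to $\sum_j N_j$. At each double point the univalence of $f$ on $\Delta$ combined with smoothness of $f$ at the two distinct preimages on $\TT$ shows that two smooth arcs cross transversally and that $\Omega$ fills two opposite local sectors, while the other two sectors are in $\Omega^c$. By the previous paragraph these two sectors lie in two \emph{distinct} $U_j$'s, so each double point contributes $2$ to $\sum_j N_j$. Therefore
$$\sum_{j=1}^{d-1} N_j=(d+1)+2(d-2)=3(d-1),$$
and combining with $N_j\ge 3$ for each $j$ forces $N_j=3$ throughout.

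The step I expect to require the most care is the claim that at every double point the two complementary sectors lie in \emph{different} components $U_j$, because without it the Gauss--Bonnet application to $U_j$ and the counting of contributions at double points would both become delicate. The argument should proceed by observing that the dual combinatorial structure --- vertices = components of $\CC\setminus f(\clos\Delta)$, edges = double points joining the two incident complementary sectors --- is a forest on $d-1$ vertices with $d-2$ edges, since a self-loop or a cycle in this graph would produce a nontrivial loop in $\Omega$, contradicting simple connectivity of $\Omega=f(\Delta)$. Once this no-self-touching assertion is in hand, the remainder is the elementary bookkeeping above.
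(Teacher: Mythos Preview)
Your proposal is correct and follows essentially the same approach as the paper: the paper's proof also combines the per-component lower bound $d_j+c_j\ge 3$ from Lemma~\ref{lem:3} (which is precisely your Gauss--Bonnet inequality, stated there only for cusps but trivially extended to corners with angle $<\pi$) with the global count $\sum d_j+\sum c_j=(2d-4)+(d+1)=3(d-1)$ to force equality. You are more careful than the paper on two points it treats implicitly---that the corners from double points still give turning $<\pi$, and that each double point borders two \emph{distinct} $U_j$ (your forest argument)---but the underlying strategy is identical.
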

\begin{proof}  Let $d_j$ and $c_j$ be the number of double points and cusp points in $\partial U_j$.   By the definition of Suffridge polynomials in $(\Sigma_d)$ we have
$$\sum d_j=2d-4,\qquad \sum c_j=d+1.$$
Adding up, we have 
$$\sum_{j=1}^{d-1} (d_j+c_j)=3d-3,\quad \text{i.e.} \quad \sum_{j=1}^{d-1} (d_j+c_j-3)=0. $$
By Lemma \ref{lem:3} we have $d_j+c_j-3\geq0$ and therefore we have $d_j+c_j-3=0$.\end{proof}

\begin{thm}\label{thm:geom1} Let $f$ be a Suffridge polynomial in $(S_d)$ and let $U_\infty$ and $U_j$'s ($1\le j\le d-2$) be the unbounded and bounded complementary components of $\CC\setminus f(\overline \DD)$. Then $\partial U_\infty$ has a single singular point (a cusp if $d=2$ and a double point if $d>2$), and each $\partial U_j$ has exactly three singular points.\end{thm}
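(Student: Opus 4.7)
The strategy follows the pattern of Theorem~\ref{thm:geom2}, with an extra ingredient needed to handle the unbounded component $U_\infty$. Let $c_j,d_j$ (resp.\ $c_\infty,d_\infty$) denote the numbers of cusps and double points on $\partial U_j$ (resp.\ $\partial U_\infty$). By Lemma~\ref{lem:suff-BQD}, $f(\TT)$ has $d-1$ cusps and $d-2$ double points; each cusp lies on exactly one complementary boundary, and each double point --- being a tangential self-intersection with anti-parallel branches forced by \eqref{eq:one} --- locally separates exactly two complementary components. Hence
$$\sum_{j=1}^{d-2}(c_j+d_j)+(c_\infty+d_\infty)=(d-1)+2(d-2)=3d-5.$$

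For each bounded $U_j$, the boundary $\partial U_j$ is a Jordan curve whose counterclockwise orientation (with $U_j$ on the left) reverses the natural direction of $f(\TT)$, giving \emph{negative} curvature at regular points by Corollary~\ref{cor:kappa}(i). The cusps of $f(\TT)$ point into $\Omega$ and hence outward from $U_j$; the pinches produced at double points likewise open away from $U_j$ (into the neighbouring complementary component) because the two tangent vectors are anti-parallel by \eqref{eq:one}. Both types of singular points therefore contribute $+\pi$ to the exterior-angle sum, and Lemma~\ref{lem:3} (extended as in its proof to treat double-point pinches like cusps) gives $c_j+d_j\ge 3$.

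For $\partial U_\infty$, the counterclockwise orientation (with the bounded interior $V_\infty\supset\Omega$ on the left) coincides with the natural $f(\TT)$ direction, so the curvature is positive and each singular point, pointing into $V_\infty$, contributes $-\pi$ in Gauss--Bonnet. Since $\partial U_\infty$ is a Jordan curve built from arcs of $f(\TT)$, it must pass through at least one singular point when $d\ge 2$, so $c_\infty+d_\infty\ge 1$. Combining with $\sum(c_j+d_j)\ge 3(d-2)$ and the total $3d-5$, all inequalities become equalities: $c_j+d_j=3$ for every bounded $j$, and $c_\infty+d_\infty=1$. To determine the type of the single singularity on $\partial U_\infty$, a further Gauss--Bonnet computation on $\partial U_\infty$ (constant conformal curvature $(d+1)/2$ on arcs of total $t$-length $T_\infty$, single singularity contributing $-\pi$) yields $T_\infty=6\pi/(d+1)$. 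For $d=2$ this equals $2\pi$, so $\partial U_\infty=f(\TT)$ closes at the unique cusp; for $d>2$ one has $T_\infty<2\pi$, so the arc must close at two \emph{distinct} preimages of the same point --- impossible for a cusp, forcing $c_\infty=0$ and $d_\infty=1$.

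The main obstacle is keeping the signs in the Gauss--Bonnet/Lemma~\ref{lem:3} bookkeeping consistent under the two opposite orientations used for bounded versus unbounded complementary components. This in particular requires a careful local analysis at the tangential double points: the anti-parallel branches forced by \eqref{eq:one} make each pinch behave like a cusp whose orientation --- inward or outward with respect to a given $U$ --- is determined by which local region the adjacent complementary component occupies.
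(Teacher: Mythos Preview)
Your counting argument is correct and matches the paper's: both arrive at
\[
(c_\infty+d_\infty-1)+\sum_{j=1}^{d-2}(c_j+d_j-3)=0
\]
and use Lemma~\ref{lem:3} (extended to double-point pinches) to force every term to vanish.

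The difference lies in how you identify the type of the single singular point on $\partial U_\infty$ when $d>2$. The paper does \emph{not} compute $T_\infty$ via Gauss--Bonnet; it simply observes that $d_\infty\ge 1$ directly, because if $\partial U_\infty$ carried no double point then the connected curve $f(\TT)$ could never branch off from the outer boundary, and there would be no bounded complementary components --- impossible since $d-2\ge 1$. Once $d_\infty\ge 1$ is known, the equality $c_\infty+d_\infty=1$ immediately gives $d_\infty=1$, $c_\infty=0$, with no further computation. Your route through $T_\infty=6\pi/(d+1)<2\pi$ and the injectivity of $f|_\TT$ at cusps is valid but longer; what it buys you is that the same Gauss--Bonnet machinery handles both the lower bound and the type determination uniformly.

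One small point: your assertion that $\partial U_\infty$ ``must pass through at least one singular point when $d\ge 2$'' is stated without justification. It is true, but it needs a sentence: if $\partial U_\infty$ had no singular points it would be a smooth closed subcurve of $f(\TT)$ with no double points, hence the image of a single arc $I\subset\TT$; closedness forces $I=\TT$, but then $\partial U_\infty=f(\TT)$ carries all $d-1\ge 1$ cusps, a contradiction. (Alternatively, the paper's observation $d_\infty\ge 1$ for $d>2$, together with the trivial case $d=2$, covers this at once.)
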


\begin{proof} Let $d_\infty$ and $d_j$ denote the number of double points on the boundary of the corresponding $U$'s. Similarly, let $c_\infty$ and $c_j$ be the number of cusps. By the definition of Suffridge polynomials we have
$$d_\infty+\sum d_j=2d-4,\qquad c_\infty+\sum c_j=d-1.$$
Adding up, we get
$$d_\infty+c_\infty +\sum_{j=1}^{d-2} (d_j+c_j) = 3(d-2) + 1,\quad \text{i.e.} \quad (d_\infty+c_\infty-1) +\sum_{j=1}^{d-2} (d_j+c_j-3)=0.$$
For $d>2$, we have $d_\infty\geq 1$ (because otherwise there cannot be any bounded $U_j$).  By Lemma \ref{lem:3} all the terms  in the right equation are non-negative and hence must be zero.  For $d=2$, we have $d_\infty=0$ and therefore $c_\infty=1$.
\end{proof}

\section{Existence of Suffridge polynomials} \label{sec:extreme}

In this section we will prove Theorems \ref{thm:Sd} and \ref{thm:Sigma} (and therefore Theorem \ref{thm:D} in Introduction). At the end of the section we will comment on Suffridge's paper \cite{Suffridge72}.

\subsection{Construction}

First we note that Theorem \ref{thm:Sd} is equivalent to the following statement.\bigskip

\begin{thm}\label{thm:f1f2} Suppose $f\in (S^*_d)$ and $N$ be the number of double points in $f(\TT)$.  If $N<d-2$ then there exist $f_1\ne  f_2$ in $(S^*_d)$
such that
$$f=\frac12(f_1+f_2).$$\end{thm}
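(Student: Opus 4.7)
My plan is to build $f_1,f_2$ as $f \pm tg$ for a carefully chosen nonzero polynomial $g$ in $V := \mathrm{span}_{\CC}\{z^2,\ldots,z^{d-1}\}$ and a small $t>0$. The conditions $a_1 = 1$ and $a_d = 1/d$ are then automatic (since $g$ has no $z$ or $z^d$ term), so the task reduces to ensuring that $f + tg$ and $f - tg$ are both univalent on $\DD$. I identify $g$ by imposing two families of linear constraints and doing a dimension count.

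First, for any $h = z + \cdots + z^d/d$ univalent on $\DD$, the $d-1$ zeros of $h'$ must all lie on $\TT$: none can lie in $\DD$ (by univalence) and none outside $\clos\DD$ (because the product of zeros of $h'$ equals $(-1)^{d-1}$, so the moduli multiply to $1$). By the reasoning in Lemma~\ref{lem:d1d1} this is equivalent to $h'$ being self-dual in $\Pi_{d-1}$. Applied to $h = f + tg$, and using that $f'$ is already self-dual, the condition amounts to $g'$ being self-dual in $\Pi_{d-1}$, i.e.\ the linear pairing $k\,b_k = (d+1-k)\,\overline{b_{d+1-k}}$ on the coefficients of $g$. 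The resulting real subspace $V' \subset V$ has real dimension $d-2$.

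Second, at each double point pair $(\zeta_i^+,\zeta_i^-)$ of $f$ the tangent vectors $\tau_i^\pm := i\zeta_i^\pm f'(\zeta_i^\pm)$ must satisfy $\tau_i^+/\tau_i^- \in \RR_{<0}$: otherwise a first-order analysis of the images under $f$ of the two small half-disks of $\DD$ attached to $\zeta_i^\pm$ produces two half-planes bounded by distinct lines, which necessarily overlap in a sector, contradicting the univalence of $f$. So the two branches of $f(\TT)$ at every double point are tangent, sharing a common tangent line $\RR\cdot\tau_i^+$. Parametrizing a perturbation of the double point by $\widetilde\zeta_i^\pm(t) = \zeta_i^\pm(1 + it\alpha_i^\pm)$ with $\alpha_i^\pm \in \RR$ and linearizing $(f+tg)(\widetilde\zeta_i^+) = (f+tg)(\widetilde\zeta_i^-)$ in $t$ gives
\[
   g(\zeta_i^+) - g(\zeta_i^-) \;=\; \tau_i^-\alpha_i^- - \tau_i^+\alpha_i^+ \;\in\; \RR\cdot\tau_i^+,
\]
a \emph{single} real linear condition on $g$ — the component of $g(\zeta_i^+) - g(\zeta_i^-)$ perpendicular to the common tangent line must vanish. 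Imposing this for each of the $N$ double points inside $V'$ leaves a real linear subspace $W$ of dimension at least $(d-2) - N \geq 1$. Pick any nonzero $g \in W$ and set $f_{1,2} := f \pm tg$ for small $t > 0$: the normalizations are preserved, self-duality keeps all critical points of $f_{1,2}$ on $\TT$, the persistence condition together with the implicit function theorem supplies nearby pairs $\widetilde\zeta_i^\pm(t) \in \TT$ where $f_{1,2}$ take equal values, and uniform injectivity of $f$ on compact subsets of $\DD$ excludes new identifications for small $|t|$. Hence $f_1,f_2 \in (S^*_d)$ and $f = \tfrac12(f_1 + f_2)$ with $f_1 \neq f_2$.

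\emph{Main obstacle.} The delicate step is upgrading first-order persistence to actual univalence of $f \pm tg$ for small nonzero $t$. A tangential double point bifurcates under a generic perturbation either into two separated arcs (univalence preserved) or into a transversal crossing \emph{inside} $\DD$ (univalence destroyed). One must verify that the linear persistence constraint, coupled with self-duality, rules out the bad bifurcation for \emph{both} signs of $t$ simultaneously; this requires either a careful second-order local analysis at each double point or a continuity argument via the structure of the univalence region inside $V'$.
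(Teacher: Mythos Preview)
Your setup, choice of linear conditions, and dimension count are exactly those of the paper: the perturbation $f\pm tg$ with $g\in\mathrm{span}\{z^2,\dots,z^{d-1}\}$, the self-duality condition on $g'$ (your $V'$, the paper's (\textbf{R1})), and the tangent-direction condition at each double point (your $g(\zeta_i^+)-g(\zeta_i^-)\in\RR\tau_i^+$, the paper's (\textbf{R2})) all match, and the count $(d-2)-N\ge1$ is the same.

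The gap is precisely the one you flag as ``Main obstacle,'' and your proposed route to it is not quite right. You appeal to the implicit function theorem to obtain nearby $\widetilde\zeta_i^\pm(t)\in\TT$ with $(f+tg)(\widetilde\zeta_i^+)=(f+tg)(\widetilde\zeta_i^-)$, but the linearization you wrote down is degenerate: since $\tau_i^+$ and $\tau_i^-$ are parallel, the derivative of $(\zeta^+,\zeta^-)\mapsto f(\zeta^+)-f(\zeta^-)$ restricted to $\TT\times\TT$ has rank~$1$, not~$2$, so the IFT does not apply. In fact the double points need \emph{not} persist (the paper's Figure~\ref{fig:split} shows them splitting), and persistence is not what is needed for univalence anyway. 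What matters is that the two arcs through $g_\delta(\zeta_i^+)$ and $g_\delta(\zeta_i^-)$ do not cross.

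The paper closes the gap with a geometric mechanism you have not used: self-duality of $(f+tg)'$ forces the curve $(f+tg)(\TT)$ to have \emph{constant conformal curvature} $(d+1)/2$ away from cusps (Corollary~\ref{cor:kappa}). This yields (i) local injectivity of $(f+tg)$ on short arcs of $\TT$ regardless of $t$ (Lemma~\ref{lem:loc-simple}: a short constant-curvature arc with at most one cusp cannot self-intersect), and (ii) the correct reading of (\textbf{R2}): it says the tangent \emph{lines} at $(f+tg)(\zeta_i^+)$ and $(f+tg)(\zeta_i^-)$ coincide for all $t$, so by the curvature bound the two short arcs stay in opposite closed half-planes determined by that common line (Lemma~\ref{lem:global-simple}), hence meet in at most one point. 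A finite covering of $\TT$ by short arcs plus a winding-number argument then globalizes to univalence on $\DD$ for all small $|t|$, simultaneously for both signs. Your second-order local analysis, without the constant-curvature rigidity, would have to rule out the crossing scenario sign-by-sign and double-point-by-double-point, which is considerably harder.
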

To construct $f_1$ and $f_2$, let us denote by
$$\{\zeta^\pm_j=\ee^{it_j^\pm}~|~j=1,\cdots,N\}\subset \TT$$
the set of preimages of the double points of $f(\TT)$ (with any particular choice of $\pm$).   We will need to find a  non-trivial polynomial
 $$r(z)=\sum_{j=2}^{d-1} a_jz^j$$ 
with the following properties:
\begin{itemize}
\item[({\bf R1})]\label{item:R1} $r'(z)$ is self-dual in $\Pi_{d-1}$; 
\item[({\bf R2})]\label{item:R2} $\displaystyle{\rm Re}\bigg(\frac{r(\zeta^+_j)-r(\zeta^-_j)}{(\zeta_j^+)^{(d+1)/2}}\bigg)=0$ for $j=1,\cdots,N$.
\end{itemize}
By Lemma \ref{lem:kappa} the geometric meaning of the second condition states that the line through the points $r(\zeta^+_j)$ and $r(\zeta^-_j)$ is parallel to the tangent line of $f(\TT)$ at the double point $f(\zeta^\pm_j)$. 

There are infinitely many polynomials $r$ satisfying the two conditions because ({\bf R2}) gives us $N<d-2$ homogeneous linear equations in $\RR^{d-2}$ while the (linear) space of polynomials satisfying ({\bf R1}) has $(d-2)$ real dimension. To write the equations explicitly, we can represent, for real numbers $c_\ell$'s,
$$r=\sum_{\ell=1}^{d-2} c_\ell\, r_\ell$$
where $\{r_\ell\}_{\ell=1,\cdots,d-2}$ is some basis in the real linear space,
 $$\{r(z)=\sum_{j=2}^{d-1}a_j z^j|\text{$r'$ is self-dual in $\Pi_{d-1}$}\}.$$ 
 For instance we can consider the basis
$$ \bigg\{\frac{z^2}{2}+\frac{z^{d-1}}{d-1},\quad i\left(\frac{z^2}{2}-\frac{z^{d-1}}{d-1}\right), \quad
\frac{z^3}{3}+\frac{z^{d-2}}{d-2},\quad i\left(\frac{z^3}{3}-\frac{z^{d-2}}{d-2}\right),\quad \dots\bigg\}.$$

For example, for $d=3$, the basis is simply $\{z^2\}$, and for $d=4$, the basis is
$$\bigg\{\frac{z^2}{2}+\frac{z^{3}}{3},i\left(\frac{z^2}{2}-\frac{z^{3}}{3}\right)\bigg\}.$$
For $d=4$, we consider $r=c_1 r_1+c_2 r_2$ with $c_{1,2}\in\RR$.  Assuming there is at most one ($<d-2$) double point, ({\bf R2}) gives at most one homogeneous linear equation for $c_1$ and $c_2$.  Obviously, there exists a nontrivial solution.

\begin{figure}\begin{center}
\includegraphics[width=0.7\textwidth]{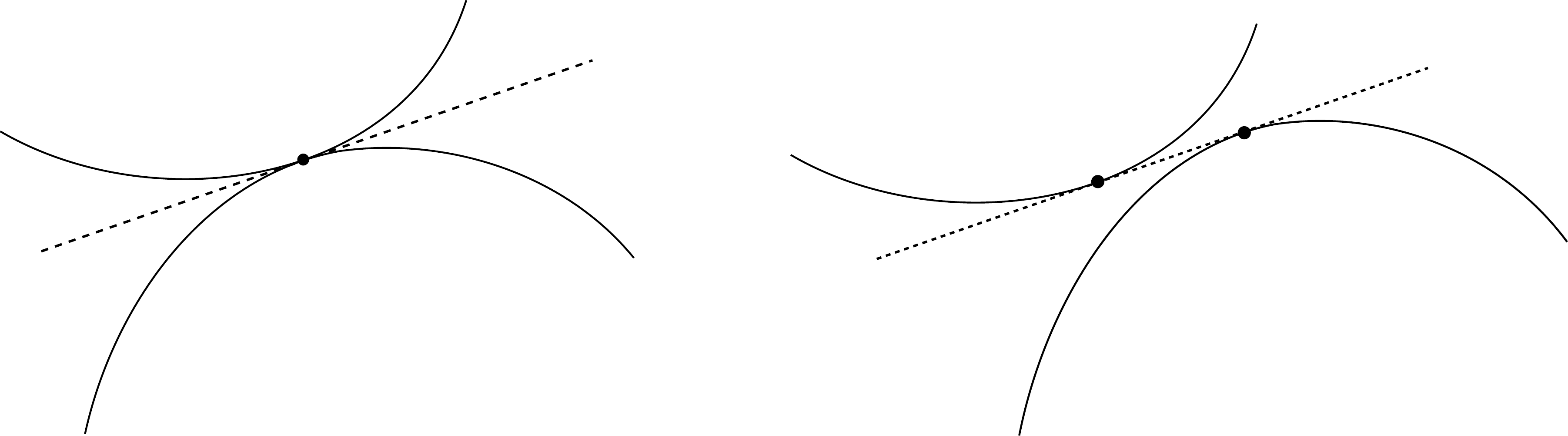}
\caption{\label{fig:split}$g_\delta(\TT)$ near a double point when $\delta=0$ (left) and $\delta\neq 0$ (right). The dots represent $g_\delta(\zeta_j^\pm)$.}
\end{center}\end{figure}
Let us define
$$g_\delta(z)=f(z)+\delta \, r(z)\quad (\delta\in \RR).$$
To prove Theorem \ref{thm:f1f2} it is enough to prove that there exists $\delta_0>0$ such that $g_\delta$ is {\em univalent} for all $\delta\in[-\delta_0,\delta_0]$. 
This will be done in the next subsection.

\subsection{Univalency}

First we describe how cusps of $g_\delta(\TT)$ move under the variation of $\delta$.
\bigskip
\begin{lemma}[Preservation of cusps]\label{lem:ZF} 
There exists $\delta_1>0$ such that, for all $\delta\in[-\delta_1,\delta_1]$, all the $d-1$ zeros of $g'_\delta$ are simple, on $\TT$, and continuously moving under the variation of $\delta$.
\end{lemma}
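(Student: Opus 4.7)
The plan is to combine continuous dependence of roots on coefficients with the self-duality of $g'_\delta$ inherited from $f'$ and $r'$.

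First, I would observe that $g'_\delta = f' + \delta r'$ is a polynomial of degree exactly $d-1$ for every $\delta\in\RR$: since $r$ lies in the span of $\{z^2,\dots,z^{d-1}\}$, the derivative $r'$ has degree at most $d-2$, so the leading term $z^{d-1}$ of $g'_\delta$ comes from $f'$ alone (recall $a_d=1/d$ gives leading coefficient $1$). Moreover, $g'_\delta$ is self-dual in $\Pi_{d-1}$: self-duality is $\RR$-linear, and $f'$ is self-dual by Lemma \ref{lem:d1d1}(i), while $r'$ is self-dual by property (R1).

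Next, since $f\in (S^*_d)$, the critical points $\zeta_1,\dots,\zeta_{d-1}$ of $f$ are distinct, lie on $\TT$, and are simple zeros of $g'_0=f'$ (by the Remark following Lemma \ref{lem:d1d1}). For each $j$, the implicit function theorem applied to the analytic equation $g'_\delta(\zeta)=0$ at the simple root $\zeta_j$ produces an analytic curve $\delta\mapsto \zeta_j(\delta)$ defined for $|\delta|\le\delta_1$ (some $\delta_1>0$), with $\zeta_j(0)=\zeta_j$, and $\zeta_j(\delta)$ remaining a simple root by continuity of $g''_\delta(\zeta_j(\delta))$. Shrinking $\delta_1$ if needed so that the $d-1$ small disks around the $\zeta_j$'s are pairwise disjoint, a Rouché argument shows that $g'_\delta$ has a single zero in each disk and no others; since $\deg g'_\delta=d-1$, these are all the zeros.

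Finally, I would deduce that $\zeta_j(\delta)\in\TT$ from self-duality. The relation $g'_\delta(z)=z^{d-1}\overline{g'_\delta(1/\bar z)}$ shows that the multiset of roots of $g'_\delta$ is invariant under the involution $\zeta\mapsto 1/\overline{\zeta}$. Hence $1/\overline{\zeta_j(\delta)}$ is also a root of $g'_\delta$, and for small $\delta$ it lies close to $1/\overline{\zeta_j}=\zeta_j$. But the only root of $g'_\delta$ in the chosen small disk around $\zeta_j$ is $\zeta_j(\delta)$ itself, so $1/\overline{\zeta_j(\delta)}=\zeta_j(\delta)$, i.e.\ $\zeta_j(\delta)\in\TT$.

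I do not expect any serious obstacle: the key conceptual point is simply that self-duality forces off-circle roots to come in pairs $\{\zeta,1/\overline{\zeta}\}$, which is incompatible with a simple on-circle root deforming continuously. The only care needed is verifying that the leading coefficient does not degenerate (immediate from the specific form of $r$) and that self-duality is preserved under the linear combination $f'+\delta r'$.
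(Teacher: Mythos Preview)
Your proof is correct and follows essentially the same approach as the paper: self-duality of $g'_\delta$ forces the root set to be invariant under $\zeta\mapsto 1/\overline{\zeta}$, and continuous dependence of simple roots on $\delta$ then pins each root to $\TT$. The paper's version is terser (it invokes Hurwitz's theorem rather than the implicit function theorem plus Rouch\'e, and does not spell out the non-degeneracy of the leading coefficient), but the underlying argument is the same.
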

\begin{proof}
By ({\bf R1}), $g'_\delta(z)$ is self-dual in $\Pi_{d-1}$ for all $\delta$ and, therefore, the zeros of $g'_\delta$ are symmetric under the circular inversion with respect to $\TT$ (i.e. under $z\mapsto 1/z$).   Since the zeros move continuously over $\delta$ by Hurwitz's theorem, and since all the zeros of $g'_{\delta=0}=f'$ are simple and on $\TT$, we obtain the lemma.
\end{proof}
The next lemma describes what happens to the double points of $f(\TT)$ under the variation of $\delta$. 
\bigskip

\begin{lemma}[Splitting of double points]\label{lem:split-double} There exists $\delta_2>0$ such that, for all $\delta\in[-\delta_2,\delta_2]$ and for all $j=1,\cdots,N$, the tangent line at $g_\delta(\zeta^+_j)$ coincides with the tangent line at $g_\delta(\zeta^-_j)$.
\end{lemma}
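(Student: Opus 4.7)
The plan is to see this lemma as an essentially algebraic consequence of $(\textbf{R1})$ and $(\textbf{R2})$ combined with the double-point relation \eqref{eq:one} for the Suffridge curve $f(\TT)$; the restriction $|\delta|\le\delta_2$ serves only to keep the tangent lines at $\zeta_j^\pm$ well-defined.

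First I would use $(\textbf{R1})$ to observe that $g_\delta'=f'+\delta\,r'$ is self-dual in $\Pi_{d-1}$ for every $\delta\in\RR$. By Lemma \ref{lem:ZF} there exists $\delta_1>0$ such that for $|\delta|\le\delta_1$ all critical points of $g_\delta$ lie on $\TT$ and depend continuously on $\delta$. Since the preimages $\zeta_j^\pm$ of the double points of $f(\TT)$ are not critical points of $f$, shrinking to some $\delta_2\in(0,\delta_1]$ ensures that $\zeta_j^\pm$ remain non-critical for every $g_\delta$ with $|\delta|\le\delta_2$. Lemma \ref{lem:kappa}(i) then applies, yielding that the unit tangent vector to $g_\delta(\TT)$ at $g_\delta(\zeta_j^\pm)$ is a nonzero real multiple of $i(\zeta_j^\pm)^{(d+1)/2}$.

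Next I would verify that these two tangent directions are parallel, a fact that depends only on $f$. The double-point relation \eqref{eq:one} applied to $(S^*_d)$ yields $(\zeta_j^+/\zeta_j^-)^{d+1}=1$ and hence $(\zeta_j^+)^{(d+1)/2}=\pm(\zeta_j^-)^{(d+1)/2}$. The two tangent directions therefore differ by a real factor, so the tangent lines at $g_\delta(\zeta_j^+)$ and $g_\delta(\zeta_j^-)$ are parallel for every $\delta\in[-\delta_2,\delta_2]$. Two parallel lines coincide iff the vector joining a point on one to a point on the other lies along the common direction, so it remains to check this condition for the displacement $g_\delta(\zeta_j^+)-g_\delta(\zeta_j^-)$.

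Since $f(\zeta_j^+)=f(\zeta_j^-)$, we have
\[
g_\delta(\zeta_j^+)-g_\delta(\zeta_j^-)=\delta\bigl(r(\zeta_j^+)-r(\zeta_j^-)\bigr),
\]
and this vector is a real multiple of $i(\zeta_j^+)^{(d+1)/2}$ precisely when $(r(\zeta_j^+)-r(\zeta_j^-))/(\zeta_j^+)^{(d+1)/2}$ is purely imaginary, which is exactly the content of $(\textbf{R2})$. Thus the two tangent lines coincide for every $\delta\in[-\delta_2,\delta_2]$. The only mild bookkeeping point is that $(d+1)/2$ is a half-integer when $d$ is even, but parallelism depends only on the squared ratio, which is single-valued, so the choice of branch is irrelevant; this is why the argument is not so much an obstacle as a direct unpacking of the conditions we imposed on $r$.
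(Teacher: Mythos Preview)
Your proof is correct and follows essentially the same route as the paper's: use $(\textbf{R1})$ and Lemma~\ref{lem:kappa} to identify the tangent direction at $g_\delta(\zeta_j^\pm)$ as $\pm i(\zeta_j^\pm)^{(d+1)/2}$, invoke \eqref{eq:one} to see these are parallel, then use $(\textbf{R2})$ together with $g_\delta(\zeta_j^+)-g_\delta(\zeta_j^-)=\delta(r(\zeta_j^+)-r(\zeta_j^-))$ to conclude the two parallel lines actually coincide, with Lemma~\ref{lem:ZF} guaranteeing $\zeta_j^\pm$ stay non-critical for small $|\delta|$. Your exposition is slightly more explicit (separating parallelism from coincidence, and flagging the half-integer branch issue), but the argument is the same.
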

\begin{proof} 
By ({\bf R1}), $g'_\delta$ is self-dual in $\Pi_{d-1}$, and Lemma \ref{lem:kappa} implies that the unit tangent vector of $g_\delta(\TT)$ at $g_\delta(\zeta)$ is given by $\pm i \zeta^{(d+1)/2}$ when $\zeta$ is {\em not} a critical point of $g_\delta$.  We note that, by \eqref{eq:one}, $(\zeta_j^+)^{(d+1)/2}=\pm(\zeta_j^-)^{(d+1)/2}$ for some sign.
Then the condition ({\bf R2}) means that the tangent lines of the curve $g_\delta(\TT)$ at two points, $g_\delta(\zeta_j^+)$ and $g_\delta(\zeta_j^-)$, coincide because the difference, $g_\delta(\zeta_j^+)-g_\delta(\zeta_j^-)=\delta (r(\zeta_j^+)-r(\zeta_j^+))$, is parallel to the tangent line at $g_\delta(\zeta^\pm_j)$ (see Figure \ref{fig:split} for an illustration). 

Using Lemma \ref{lem:ZF}, $\zeta^\pm_j$'s are distinct from all the critical points of $g_\delta$ for $\delta$ small enough, and the lemma follows.
\end{proof}

Next we show that the curve $g_\delta(\TT)$ is ``locally simple''.  The main idea is that $g_\delta(\TT)$ has a constant conformal curvature (due to Lemma \ref{lem:kappa} and the self-dual property of $g'_\delta$).  Such rigidity makes it impossible to create a ``small'' self-intersecting arc.

\bigskip
\begin{lemma}[Local injectivity]\label{lem:loc-simple} Let $g$ be a polynomial  such that $g(\TT)$ has a constant conformal curvature $\kappa>0$ away from the cusps. There exists $\epsilon_1>0$ such that, if $I\subset\TT$ is an arc with $|I|<\epsilon_1$ then $g(I)$ is a simple arc.
\end{lemma}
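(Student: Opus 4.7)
The plan is to recast injectivity as the nonvanishing of an explicit oscillatory integral, and then verify this by a trigonometric sign argument. By Lemma \ref{lem:kappa} the constant conformal curvature hypothesis ($\kappa>0$) forces the velocity of $t\mapsto g(\ee^{\ii t})$ to have the form
\[
\frac{\dd}{\dd t}\,g(\ee^{\ii t})=\ii\,\beta(t)\,\ee^{\ii\kappa t},
\]
where $\beta:\RR\to\RR$ is continuous, $2\pi$-periodic, and vanishes precisely at the cusps of $g(\TT)$, with a sign change across each of them (the cusps being simple zeros of $g'$). Integrating, the local injectivity problem reduces to showing
\[
F(t_1,t_2)\;:=\;\int_{t_1}^{t_2}\beta(t)\,\ee^{\ii\kappa t}\,\dd t\;\neq\;0\qquad\text{whenever }0<t_2-t_1<\epsilon_1.
\]

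I would take $\epsilon_1<\min(\pi/\kappa,\,\delta_{\mathrm{cusp}})$, where $\delta_{\mathrm{cusp}}$ is the minimum arclength gap between consecutive cusps on $\TT$, so that any arc of length less than $\epsilon_1$ contains at most one cusp. \emph{Case 1 (no cusp in $(t_1,t_2)$).} Then $\beta$ has a constant nonzero sign on this interval. Setting $m=(t_1+t_2)/2$, $\ell=t_2-t_1$ and substituting $u=t-m$,
\[
\ee^{-\ii\kappa m}F(t_1,t_2)=\int_{-\ell/2}^{\ell/2}\beta(m+u)\,\ee^{\ii\kappa u}\,\dd u.
\]
Its real part has integrand $\beta(m+u)\cos(\kappa u)$ of one strict sign (because $|\kappa u|<\pi/2$), hence is nonzero. \emph{Case 2 (an interior cusp $t_*\in(t_1,t_2)$).} Centering at $t_*$ and taking instead the \emph{imaginary} part,
\[
{\rm Im}\bigl(\ee^{-\ii\kappa t_*}F(t_1,t_2)\bigr)=\int_{t_1-t_*}^{t_2-t_*}\beta(t_*+u)\sin(\kappa u)\,\dd u.
\]
Both $\beta(t_*+u)$ and $\sin(\kappa u)$ change sign at $u=0$ and nowhere else in $(-\epsilon_1,\epsilon_1)$ (by the choice of $\epsilon_1$), so their product is of one strict sign on $(t_1-t_*,t_2-t_*)\setminus\{0\}$ and the integral is nonzero. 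The degenerate sub-case in which a cusp coincides with an endpoint of the arc reduces directly to Case 1.

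The only conceptual step is the opening observation that constant conformal curvature writes the velocity as a real-valued amplitude $\beta(t)$ times the uniformly rotating phase $\ee^{\ii\kappa t}$; everything that follows is pure trigonometry. The delicate point is Case 2: the naive real-part estimate used in Case 1 fails because $\beta$ crosses zero at the cusp, and the remedy is to use the imaginary part, exploiting the fact that the sign changes of $\beta$ and of $\sin(\kappa u)$ both occur at $u=0$ so that their product keeps a definite sign on each side.
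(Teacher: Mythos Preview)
Your proof is correct and takes a genuinely different route from the paper's.

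The paper argues geometrically by contradiction: assuming a self-intersection, it extracts a simple closed subarc $g(I_0)$ with at most one cusp, and then uses the bound on total tangent rotation ($\le\pi$ on an arc of length $<\pi/\kappa$) to argue that the curve cannot cross back over a suitably chosen tangent line, contradicting closure. Your approach instead computes the displacement $g(\ee^{\ii t_2})-g(\ee^{\ii t_1})$ directly as the oscillatory integral $\int\beta(t)\ee^{\ii\kappa t}\,\dd t$ and proves nonvanishing by a phase rotation followed by a sign analysis of the real part (no cusp) or imaginary part (one cusp). The key insight---that the sign change of $\beta$ at the cusp aligns with the sign change of $\sin(\kappa u)$, so the product keeps a definite sign---is a clean analytic substitute for the paper's tangent-line barrier. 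Your argument is shorter and arguably more transparent, while the paper's version makes the geometric mechanism (the curve cannot bend back fast enough) more visible.

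One small inaccuracy: $\beta$ need not be $2\pi$-periodic. From $g'(\ee^{\ii t})=\beta(t)\ee^{\ii(\kappa-1)t}$ with $\kappa=(k+2)/2$, one has $\beta(t+2\pi)=(-1)^{k}\beta(t)$, so $\beta$ is only anti-periodic when $k$ is odd. This is harmless for your argument since you only use $\beta$ on an interval of length $<\epsilon_1<2\pi$, but the periodicity claim should be dropped or corrected.
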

\begin{proof}
Since there are a finite number of cusps, one can choose $\epsilon_1\in(0,\pi/\kappa)$ so small that $g(I)$ with $|I|<\epsilon_1$ can have at most single cusp.

If $g(I)$ is not a simple curve, there exists $\zeta_1\neq \zeta_2$ in $I$ such that $g(\zeta_1)=g(\zeta_2)$. Let $I_0\subset I$ be the arc with the endpoints at $\zeta_1$ and $\zeta_2$.  We may assume that $g(I_0)$ is a simple closed curve; if not one can choose different $\zeta_1$ and $\zeta_2$.   Let us denote $P=g(\zeta_1)=g(\zeta_2)$.

There are two possibilities: either $g(I_0)$ has i) a single cusp singularity or ii) none.
For the case i), denoting the cusp point by $Q$, $g(I_0)$ consists of {\em two smooth arcs} that share the two endpoints $P$ and $Q$ (leftside in Figure \ref{fig:proof}).   For the case ii) there is only one smooth arc that starts and ends at a single point, $P$ (rightside in Figure \ref{fig:proof}).
Below, we show that neither case is possible.

Case i): Let us denote the two smooth arcs of $g(I_0)$ by $A_1$ and $A_2$ (see Figure \ref{fig:proof}):
$$g(I_0)=A_1\cup A_2$$ such that $A_1\cap A_2=\{Q,P\}$.  
As one moves along $A_1$ or $A_2$ the angle of the tangent changes monotonically.
Since $|I_0|\leq \pi/\kappa$ the angle of the tangent cannot vary more than $\pi$.
Therefore the tangent direction is always directed {\em away} from the straight line (e.g. the dashed line in Figure \ref{fig:proof}) that passes through the cusp point towards the direction of the cusp.  This means that the whole curve $A_1$ cannot intersect the straight line except at the cusp point.
The same holds for $A_2$ and this is a contradiction (i.e. the illustrated situation cannot happen).

Case ii): As one moves along $g(I_0)$ to either direction from $P$ the angle of the tangent direction changes linearly.
Since $|I_0|\leq \pi/\kappa$ the angle of the tangent cannot vary more than $\pi$.
Therefore the tangent direction is always directed {\em away} from the straight line (e.g. the dashed line in Figure \ref{fig:proof}) that is tangent to the initial direction at $P$.  Therefore the distance from the straight line increases monotonically along the curve  $g(I_0)$ and this is a contradiction.
\begin{figure}
\begin{center}
\includegraphics[width=0.5\textwidth]{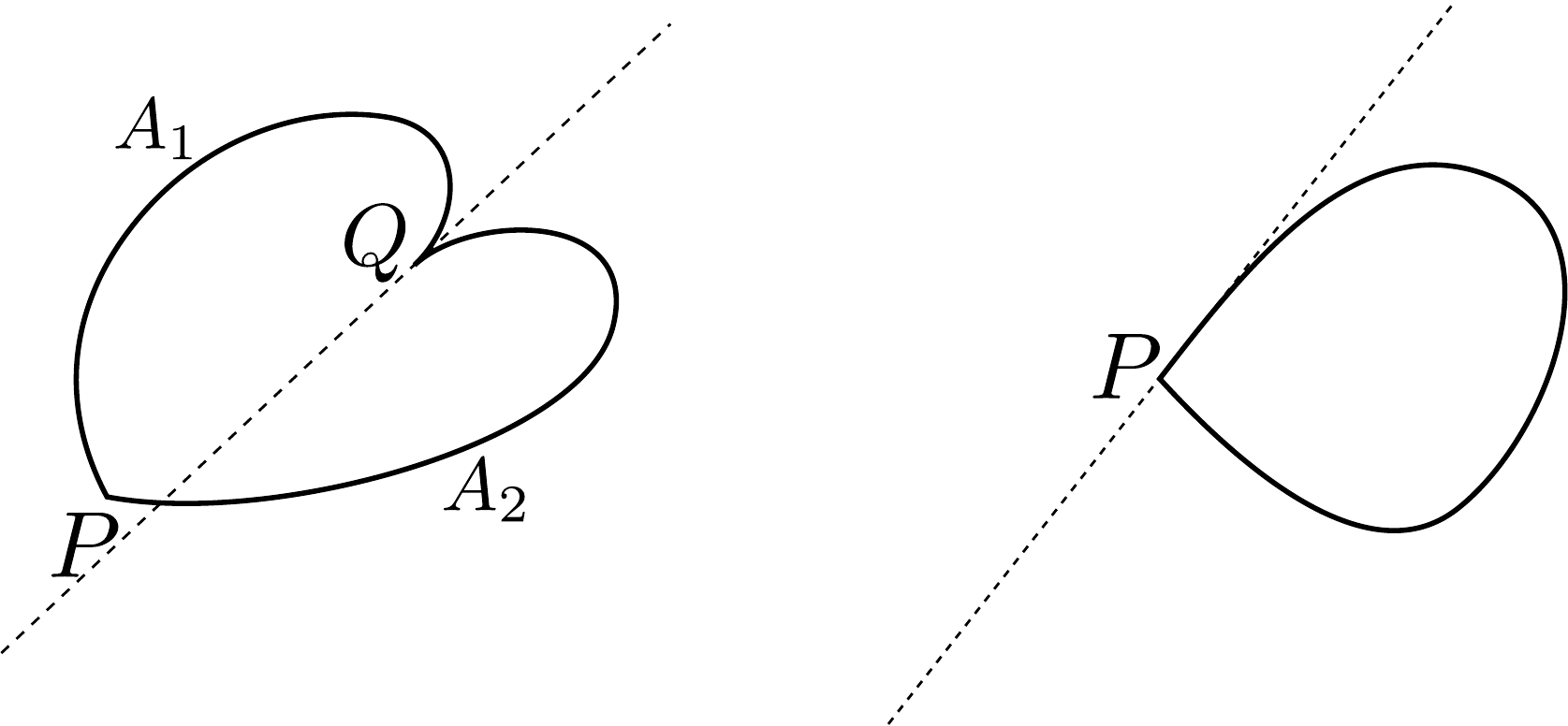}
\caption{\label{fig:proof} Both cases are impossible as they require a large curvature}
\end{center}
\end{figure}
\end{proof}
Using the similar argument and Lemma \ref{lem:split-double}, we show that, near $g_\delta(\zeta_j^\pm)$, $\zeta^\pm_j=\exp({i t^\pm_j})$, the only possible self-intersection is a single double point.
\bigskip
\begin{lemma}[Self-intersection near double points]\label{lem:global-simple} There are sufficiently small $\epsilon_2>0$ and $\delta_2>0$, such that, for all $\delta\in[-\delta_2,\delta_2]$, the possible intersection between $g_\delta(I^+_j)$ and $g_\delta(I^-_j)$ is at most a single double point (for each $j$) where  
$$I^\pm_j=\{\ee^{i t}~ |~ t\in[t^\pm_j-\frac{\epsilon_0}{2},t^\pm_j+\frac{\epsilon_0}{2}]\}\subset\TT.$$
\end{lemma}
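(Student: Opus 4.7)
The plan is to use the constant positive conformal curvature of $g_\delta$ to force the two arcs onto opposite sides of their common tangent line; since each arc can touch that tangent line only at its own midpoint, the intersection is at most one point.

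First I would fix $\epsilon_2\in(0,2\pi/(d+1))$ small enough that Lemma \ref{lem:loc-simple} applies to arcs of length $\epsilon_2$, the closed arcs $I_j^\pm$ are pairwise disjoint, and each $I_j^\pm$ contains no critical point of $f$. Since $g_\delta'=f'+\delta r'$ is self-dual in $\Pi_{d-1}$ by property (\textbf{R1}) together with Lemma \ref{lem:d1d1}, Lemma \ref{lem:kappa} still gives $g_\delta(\TT)$ the same constant conformal curvature $(d+1)/2$. By Lemma \ref{lem:ZF}, after shrinking $\delta_2$ the critical points of $g_\delta$ also stay out of each $I_j^\pm$, so $g_\delta$ is smooth there and, by Lemma \ref{lem:loc-simple} applied to $g_\delta$, each $C_\pm:=g_\delta(I_j^\pm)$ is a simple arc.

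Next I would use Lemma \ref{lem:split-double} to put the common tangent line to $g_\delta(\TT)$ at $M_\pm:=g_\delta(\zeta_j^\pm)$ along the real axis. The crucial refinement beyond Lemma \ref{lem:split-double} is that the tangent \emph{vectors} (not just lines) at $M_+$ and $M_-$ point in opposite directions. At $\delta=0$ this is exactly the input used in deriving equation \eqref{eq:one} (``the two tangent vectors at the double point have opposite directions''), and it follows from \eqref{eq:one} combined with the parity of the sign-changes of $A(\zeta)$ between $\zeta_j^-$ and $\zeta_j^+$. Since $g_\delta'(\zeta_j^\pm)$ is continuous in $\delta$ and the antiparallel relation is an open condition once the common tangent line is fixed, it persists on $[-\delta_2,\delta_2]$ after a further shrinkage of $\delta_2$.

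I then exploit the rigidity. On $C_+$, with coordinates chosen so that the tangent angle at $M_+$ is $0$, Lemma \ref{lem:kappa} makes the tangent angle $\theta(t)=(d+1)(t-t_j^+)/2$, which varies in $(-(d+1)\epsilon_2/4,(d+1)\epsilon_2/4)\subset(-\pi/2,\pi/2)$. The identity $y(t)-y(t_j^+)=\int_{t_j^+}^{t}|\gamma'(s)|\sin\theta(s)\,\dd s$ then shows $y(t)>0$ for $t\neq t_j^+$, so $C_+$ lies in the closed upper half-plane and meets the real axis only at $M_+$. The same argument applied to $C_-$, whose tangent angle at $M_-$ is $\pi$, places $C_-$ in the closed lower half-plane meeting the real axis only at $M_-$. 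Consequently $C_+\cap C_-\subseteq\{M_+\}\cap\{M_-\}$, which has at most one point.

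The main obstacle is the bookkeeping for the tangent-vector direction at $M_\pm$: Lemma \ref{lem:split-double} alone yields only the common tangent line, and one must extract from the derivation of \eqref{eq:one}, together with the sign pattern of $A(\zeta)$ at cusps, that $\tau(M_+)=-\tau(M_-)$ for the specific double points under consideration, and then transport this antiparallel property to $g_\delta$ by continuity. Once that discrete sign is pinned down, the remaining steps are elementary plane geometry enforced by the constant conformal curvature.
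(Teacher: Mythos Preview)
Your proposal is correct and follows essentially the same route as the paper: use Lemma~\ref{lem:split-double} to get a common tangent line at $g_\delta(\zeta_j^\pm)$, then exploit the constant conformal curvature $(d+1)/2$ to confine each short arc to a closed half-plane bounded by that line, meeting it only at its midpoint. The paper's proof is terser and simply asserts that the two arcs sit in \emph{separate} half-planes, whereas you isolate and justify the extra ingredient this requires---that the unit tangent vectors at $M_+$ and $M_-$ are antiparallel, inherited from the double-point geometry at $\delta=0$ and preserved by continuity since $\zeta_j^\pm$ avoid the critical points of $g_\delta$. This is a genuine point the paper glosses over, so your version is a cleaner write-up of the same argument rather than a different proof.
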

\begin{proof} 
We choose $\delta_2$ by the one in Lemma \ref{lem:split-double}.  By choosing $\epsilon_2<\pi/(2\kappa)$ small enough we can make $I^+_j\setminus\{g_\delta(\zeta^+_j)\}$ and $I^-_j\setminus\{g_\delta(\zeta^-_j)\}$ to be cusp-free and to be confined in the separate half-planes divided by the straight line that is simultaneously tangent at $g_\delta(\zeta^+_j)$ and $g_\delta(\zeta^-_j)$.     The confinement in a half-plane follows from the same argument used in Lemma \ref{lem:loc-simple} (that is, by the constraint on the curvature, the line segment $I^\pm_j$ cannot turn itself to intersect the straight line except at $g_\delta(\zeta_j^\pm)$).
\end{proof}

\begin{proof}[Proof of Theorem \ref{thm:f1f2}]
We choose $N_0 \geq 4\pi/\min\{\epsilon_1,\epsilon_2\}$ where $\epsilon_1$ and $\epsilon_2$ are from Lemma \ref{lem:loc-simple} and Lemma \ref{lem:global-simple}.  We also choose $\delta_0\leq\min\{\delta_1,\delta_2\}$ where $\delta_1$ and $\delta_2$ are from Lemma \ref{lem:ZF} and Lemma \ref{lem:split-double}.

From Lemma \ref{lem:loc-simple}, $g_\delta(J_\ell)$ is a simple arc for
$$J_\ell:=\{ \ee^{i t} ~|~ t\in \Big[2\pi\frac{ \ell}{N_0},2\pi\frac{\ell+1}{N_0}\Big]\}\subset\TT,\quad \ell=0,1,\cdots,N_0-1.$$
For $\delta=0$, there are three possibilities for a given pair $(J_\ell,J_k)$ of arcs: $g_0(J_\ell)\cap g_0(J_k)=\emptyset$; $\ell$ and $k$ are {consecutive} (in such case, we get $g_\delta(J_{k-1})\cap g_\delta(J_k)=\{g_\delta(\ee^{2\pi i k/N_0})\}$); or $g_\delta(J_\ell)$ and $g_\delta(J_k)$ share a single double point.   For the pair $(J_\ell,J_k)$ in the first case, by the continuity of $g_\delta$ over $\delta$, we have
$g_\delta(J_\ell)\cap g_\delta(J_k)=\emptyset$ for $\delta<\delta_0$ for a sufficiently small $\delta_0$.

So we proved that, there exists $\delta_0>0$ such that, for all $\delta\in[-\delta_0,\delta_0]$, the only possible self-intersections of $g_{\delta}(\TT)$ are a finite number of double points.

Let us denote by $\Omega_\delta$ the simply-connected bounded domain enclosed by $g_\delta(\TT)$.  At $\delta=0$, from the univalency of the map $f=g_{0}$, for any point $p\in\Omega_{0}$ the loop $g_{0}(\TT)$ has the winding number one, and winding number zero for any point $p\notin\clos\Omega_{0}$.

Since the only self-intersections of $g_\delta(\TT)$ are double points, there is no ``crossing'' of the boundary $g_\delta(\TT)$ as one varies $\delta$ in $[-\delta_0,\delta_0]$.  
It implies that the winding numbers of $g_\delta(\TT)$ at the regions $\Omega_\delta$ and $\CC\setminus\clos\Omega_\delta$ are preserved under the variation of $\delta$; they are fixed to one and zero respectively.
By the argument principle, it means that $g_\delta:\DD\to \Omega_\delta$ is univalent, and this proves Theorem \ref{thm:f1f2}.
\end{proof}

\subsection{Proof in the case of \texorpdfstring{$(\Sigma_d)$}{Sigmad}}

Since the proof of Theorem \ref{thm:Sigma} is similar to the proof of Theorem \ref{thm:Sd}, we will briefly outline the argument.
We need to verify the following statement:
\vspace{0.2cm}
\begin{thm}
Suppose $f\in (\Sigma_d^*)$ has exactly $N<d-2$ double points.  Then there exist $f_1\neq f_2$ in $(\Sigma_d^*)$ such that  $$f=\frac{f_1+f_2}2.$$
\end{thm}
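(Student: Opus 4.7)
The plan is to parallel the proof of Theorem~\ref{thm:f1f2}, with modifications tailored to the exterior-disc setting. Let $\zeta_j^\pm=\ee^{\ii t_j^\pm}\in\TT$, $j=1,\dots,N$, be the preimages of the double points of $f(\TT)$. A perturbation $g_\delta:=f+\delta r$ lies in $(\Sigma_d^*)$ precisely when $r$ preserves both the leading coefficient $1$ of $z$ and the normalization $a_d=-1/d$, which forces the Laurent form
\[
r(z)=\sum_{j=1}^{d-1}\frac{b_j}{z^j}.
\]
I will then impose the analogues of (\textbf{R1})--(\textbf{R2}): (\textbf{R1$'$}) $r'(1/z)$ is self-dual in $\Pi_{d+1}$, and (\textbf{R2$'$}) $\mathrm{Re}\bigl((r(\zeta_j^+)-r(\zeta_j^-))/(\zeta_j^+)^{(1-d)/2}\bigr)=0$ for $j=1,\dots,N$. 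By Lemma~\ref{lem:d1d1}(ii), $f'(1/z)$ is already self-dual in $\Pi_{d+1}$, so (R1$'$) makes $g_\delta'(1/z)$ self-dual; combined with Lemma~\ref{lem:kappa}(ii) it pins the tangent direction of $g_\delta(\TT)$ to $\pm \ii \zeta^{(1-d)/2}$ away from cusps. Condition (R2$'$) then says exactly that the chord $r(\zeta_j^+)-r(\zeta_j^-)$ is parallel to the common tangent line at the double point, so that the tangential coincidence persists to first order in $\delta$.

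Next I verify that (R1$'$)--(R2$'$) admits a nontrivial solution. Writing $q(z):=r'(1/z)=\sum_{m=2}^{d}(-(m-1)b_{m-1})z^m$, the self-duality relation $q_j=\overline{q_{d+1-j}}$ at $j=1$ forces $b_{d-1}=0$; for $m=2,\dots,\lfloor d/2\rfloor$ the pairing $q_m=\overline{q_{d+1-m}}$ couples $b_{m-1}$ with $b_{d-m}$ via $b_{d-m}=\tfrac{m-1}{d-m}\,\overline{b_{m-1}}$, and when $d$ is odd the middle coefficient $b_{(d-1)/2}$ is constrained to be real. A direct parity-by-parity count yields a real subspace of dimension $d-2$. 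Since $N<d-2$, the $N$ real-linear equations (R2$'$) still admit a nontrivial $r$.

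With this $r$, the analogues of Lemmas~\ref{lem:ZF}--\ref{lem:global-simple} carry over essentially verbatim: cusp preservation uses (R1$'$) and Hurwitz's theorem; double-point splitting uses (R2$'$) together with the identity $(\zeta_j^+)^{(1-d)/2}=\pm (\zeta_j^-)^{(1-d)/2}$ following from~\eqref{eq:one}; local injectivity holds with the absolute conformal curvature $|\kappa|=(d-1)/2$ in place of $\kappa$ in Lemma~\ref{lem:loc-simple}, whose proof uses only the magnitude of $\kappa$ on a short arc; and control of self-intersections near double points is unchanged. To conclude univalence on $\Delta$, I apply the argument principle on $\DD$: at $\delta=0$ and for any $p\in f(\Delta)$ the winding number of $f(\TT)$ about $p$ equals
\[
\#\{\text{zeros of }f-p\text{ in }\DD\}-\#\{\text{poles of }f\text{ in }\DD\}=d-d=0,
\]
which persists under small $\delta$ because the only crossings of $g_\delta(\TT)$ are the finitely many surviving double points; hence each such $p$ has $(d+1)-d=1$ preimage in $\Delta$, proving univalence of $g_\delta$ on $\Delta$.

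The principal obstacle is the argument-principle bookkeeping in the last step: unlike in the interior-disc case, $g_\delta$ has a genuine pole of order $d$ at the origin, so univalence on $\Delta$ must be extracted from a winding-number identity involving both zeros and poles of $g_\delta-p$ in $\DD$. The dimension count, though more delicate than its interior counterpart (since $b_{d-1}=0$ is forced rather than imposed), is otherwise a routine parity-by-parity calculation.
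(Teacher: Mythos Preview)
Your proposal is correct and follows the paper's Section~3.3 outline (which merely says ``the proofs are same''); you supply the details the paper omits, including the dimension count and---addressing your own stated concern---the argument-principle step with the order-$d$ pole at $0$, which is indeed the only genuinely new bookkeeping relative to the $(S_d^*)$ case. Minor note: your (\textbf{R2$'$}) written in terms of $r$ is the correct condition and the paper's version with $r'$ is a typo, since the tangential-coincidence requirement concerns the displacement $g_\delta(\zeta_j^+)-g_\delta(\zeta_j^-)=\delta\bigl(r(\zeta_j^+)-r(\zeta_j^-)\bigr)$.
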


Let $f(\zeta_j^\pm)$'s ($j=1,\cdots,N$) be the double points of the curve $f(\TT)$.
Since $f'(1/z)$ is self-dual in $\Pi_{d+1}$, we consider the perturbation $g_\delta=f+\delta r$ ($\delta\in\RR$) of $f$ such that
 $$r(z)=\sum_{j=1}^{d-1} a_j/z^j$$ 
satisfies:

({\bf R1}) $r'(1/z)$ is self-dual in $\Pi_{d+1}$ (hence $a_{d-1}=0$);

({\bf R2}) $\displaystyle{\rm Re}\frac{r'(\zeta_j^+)-r'(\zeta_j^-)}{(\zeta_j^+)^{(1-d)/2}}=0$, for $j=1,\cdots,N$.

The latter condition says, by Lemma \ref{lem:kappa}, that the image of $\zeta^\pm_j$ under $g_\delta$ is such that their tangent lines coincide (see Figure \ref{fig:split}).
Since the (real-)dimension of the space of $r$ that satisfies ({\bf R1}) is $d-2$, there exists (infinitly many) $r$ that satisfies ({\bf R2}).

Then we show that $g_\delta$ is univalent on $(\clos\DD)^c$ for sufficiently small $\delta$.  All the statements (Lemma \ref{lem:ZF}, Lemma \ref{lem:split-double}, Lemma \ref{lem:loc-simple}, Lemma \ref{lem:global-simple}) are true and the proofs are same.

\subsection{Remark}\label{app:Suffridge}

If $q=q(z)$ is analytic in $\DD$ and $\zeta\in\TT$ then we define the function
\begin{equation*}
	(D_\zeta q)(z)=\frac{q(\zeta z)-q(\overline \zeta z)}{z(\zeta-\overline \zeta)},\qquad z\in\DD.
\end{equation*}
Given $n>2$, the Suffridge class $(P_n)$ consists of polynomials 
\begin{equation*}
	q(z)=z+b_2z^2+\cdots+b_nz^n
\end{equation*}
such that 
\begin{equation*}
 \zeta^{2n+2}=1,\quad \zeta\neq \pm1\quad\Rightarrow\quad D_\zeta q \text{ does not vanish in $\DD$.}
\end{equation*}

{\bf Claim} { (Theorem 1 + Theorem 3 + Theorem 6 in \cite{Suffridge72}) If $p$ is an extreme point of $(P_n)\subset\Pi_n$, then the polynomial
\begin{equation*}
  f(z)=\frac{n+1}{n} p(z)-\frac{1}{n} z\,p'(z)
\end{equation*}
is univalent in $\DD$ and $f(\DD)$ has $n-2$ double points.}

The most difficult part of this combination of three theorems is Theorem 3 in \cite{Suffridge72}, which can be regarded as a univalency criterion for $f$.
In terms of $f$, Theorem 3 can be restated as follows.

{\em Let a polynomial
$$f(z) = 1+ a_2 z^2+\cdots + \frac{1}{n}z^n $$
be such that $f'$ is self-dual in $\Pi_{n-1}$.  Then $f$ is univalent in $\DD$ if and only if 
\begin{equation}\label{eq:univ-crit}
	\zeta^{2n+2}=1,\quad \zeta\neq \pm1\quad\Rightarrow\quad D_\zeta f \text{ does not vanish in $\DD$.}
\end{equation}
}
For polynomials, this result is an ambitious improvement over Dieudonn\'e's univalence criterion \cite{Dieudonne} which states that {\it an analytic function $f$ in $\DD$ is univalent if and only if $D_\zeta f$ does not vanish in $\DD$ for all $\zeta\in\TT$}.

Unfortunately, Suffridge's proof in \cite{Suffridge72} is somewhat sketchy and we have been unable to follow and verify some of the arguments.  Perhaps some similar arguments are better explained in his previous papers.  Anyway, we presented in this Section a more direct self-contained proof of the existence of Suffridge polynomials which uses neither the univalence criterion \eqref{eq:univ-crit} nor the Suffridge classes $(P_n)$. At the same time, we want to emphasize that the main ideas (the use of constant curvature and Krein-Milman theorem) are from Suffridge's paper.

\section{Inscribed cardioid theorem}\label{sec:packing}

\subsection{Deltoid-like and cardioid-like curves}\label{sec:inscribed-card}

\begin{defn}
A  Jordan curve in the plane is {\it cardioid-like} if it is smooth and has positive curvature with respect to counterclockwise orientation except for a single (inward) cusp singularity.
\end{defn}

\bigskip\begin{defn}
A Jordan curve is {\it deltoid-like} if it is smooth except for three outward cusps and if (at least) one of the arcs between the cusp points has negative curvature and the total variation of the tangent direction along this arc is less than $\pi$.  It $T$ is a deltoid-like curve, then we will denote such an arc by $ T_\text{conc}$. 
\end{defn}

Sometimes, we will call {\it cardioid-like} or {\it deltoid-like} for the corresponding (bounded) Jordan domains.

\bigskip
\begin{prop}\label{prop} (i) Let $\Omega$ be an unbounded extreme quadrature domain of order $d\geq 2$.  The interior of $\Omega^c$ has $d-1$ components bounded by deltoid-like curves.

(ii) Let $\Omega$ be a bounded extreme quadrature domain of order $d\geq 2$. The interior of $\Omega^c$ has $d-1$ components. The boundary of the unbounded component is cardioid-like, and  the boundaries of the bounded components are deltoid-like. 
\end{prop}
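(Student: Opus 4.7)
The proposition combines three ingredients already developed in the paper: the count of singular points on each boundary component (Theorems \ref{thm:geom1} and \ref{thm:geom2}), the constant conformal curvature of Suffridge curves (Corollary \ref{cor:kappa}), and the tangency of the two branches of $f(\TT)$ at every double point (Lemma \ref{lem:kappa} together with \eqref{eq:one}). By Theorem \ref{thm:D}, every extreme quadrature domain is the image of a Suffridge polynomial, so I may take $\Omega=f(\Delta)$ with $f\in(\Sigma_d^*)$ in case (i), and $\Omega=f(\DD)$ with $f\in(S_d^*)$ in case (ii).

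The first step is to track orientations and curvature signs. Because $f$ is orientation-preserving and conformal, the Suffridge parametrization $t\mapsto f(\ee^{\ii t})$ traces $f(\TT)$ counterclockwise around the bounded side of the curve. For each bounded component $U_j\subset\inte(\Omega^c)$, the counterclockwise orientation of $\partial U_j$ therefore agrees with the Suffridge parametrization in case (i) and is reversed in case (ii); in either case, Corollary \ref{cor:kappa} yields a negative constant conformal curvature along $\partial U_j$ counterclockwise. For $U_\infty$ in case (ii), counterclockwise around the bounded side of $\partial U_\infty$ coincides with Suffridge, so the curvature is $(1+d)/2>0$. Finally, at every cusp $\zeta_0$ of $f(\TT)$ the local expansion $f(\zeta)-f(\zeta_0)\approx c_2(\zeta-\zeta_0)^2$ (with $c_2\neq0$) and the fact that $(\zeta-\zeta_0)^2$ covers a full $2\pi$ angular range as $\zeta$ runs through the $f$-domain side of $\TT$ show that $\Omega$ fills a punctured neighborhood of $f(\zeta_0)$; hence every $f(\TT)$-cusp points into $\Omega$, so it is an outward Jordan cusp of any bounded $U_j$ it borders and an inward Jordan cusp of $\partial U_\infty$.

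The crucial geometric input is that by \eqref{eq:one} and Lemma \ref{lem:kappa} every double point of $f(\TT)$ is a tacnode. As a Jordan curve $\partial U_j$ passes through such a tacnode, the tangent direction either jumps by $\pi$, producing a Jordan cusp, or remains continuous, leaving a smooth point, depending on whether the two branch tangents at the preimages are anti-parallel or parallel. To rule out the smooth alternative everywhere, I would apply the oriented form of the total-rotation identity from the proof of Lemma \ref{lem:3},
\begin{equation*}
\int_{\partial U_j}k\,\dd s \;+\; \pi\bigl(N_j^{\text{out}}-N_j^{\text{in}}\bigr) \;=\; 2\pi,
\end{equation*}
to each Jordan boundary, where $N_j^{\text{out}}$ and $N_j^{\text{in}}$ count outward and inward Jordan cusps. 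For a bounded $U_j$, $\int k\,\dd s<0$ forces $N_j^{\text{out}}-N_j^{\text{in}}\ge 3$; combined with $c_j+d_j=3$ from Theorems \ref{thm:geom1} and \ref{thm:geom2} bounding the total number of $f(\TT)$-singularities on $\partial U_j$, this gives $N_j^{\text{out}}=3$, $N_j^{\text{in}}=0$, and every tacnode on $\partial U_j$ is of the cusp-producing type. For $U_\infty$ in case (ii), the same identity with positive curvature and a single singular point forces $N_\infty^{\text{in}}=1$, $N_\infty^{\text{out}}=0$, so this singular point is an inward Jordan cusp.

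The proposition then follows. Each bounded $U_j$ has a Jordan boundary with exactly three outward cusps, smooth in between with constant negative curvature; the identity gives $\int k\,\dd s=-\pi$ split among the three smooth arcs, so by pigeonhole at least one arc has total tangent variation at most $\pi/3<\pi$, verifying the definition of \emph{deltoid-like}. The unbounded $U_\infty$ in case (ii) has a Jordan boundary with one inward cusp and constant positive curvature, matching \emph{cardioid-like}. The main obstacle is the rotation argument: it is what converts the a priori possibility of a tangential self-contact remaining smooth on an adjacent Jordan curve into an actual cusp, and the delicate step is matching the geometric count of $f(\TT)$-singularities with the count required by the identity.
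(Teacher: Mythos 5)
Your overall route is the paper's own (the published proof is essentially a two-line appeal to Corollary \ref{cor:kappa} and Theorems \ref{thm:geom1}, \ref{thm:geom2}, plus the remark that $\int|k|\,\dd s=|\int k\,\dd s|=\pi$ forces every smooth arc to have tangent variation $<\pi$), and most of what you add correctly fills in what the paper leaves implicit: the orientation and curvature-sign bookkeeping, the local model at a critical point of $f$ on $\TT$, the count $N_j^{\text{out}}-N_j^{\text{in}}\ge 3$ combined with $c_j+d_j=3$ for the bounded components, and the pigeonhole step at the end.

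There is, however, a genuine gap in your treatment of $U_\infty$ in case (ii) when $d>2$, where by Theorem \ref{thm:geom1} the unique singular point of $\partial U_\infty$ is a \emph{double point} of $f(\TT)$, not a critical value. You assert that the turning identity ``with positive curvature and a single singular point forces $N_\infty^{\text{in}}=1$, $N_\infty^{\text{out}}=0$.'' It does not: with one singular point the identity permits $\int k\,\dd s=\pi$ (outward cusp), $2\pi$ (smooth point), or $3\pi$ (inward cusp), and all three are compatible with $\int k\,\dd s>0$; relation \eqref{eq:one} only gives $\int k\,\dd s=\tfrac{d+1}{2}(t^+-t^-)\in\pi\ZZ$, which does not single out $3\pi$. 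So the cusp-versus-smooth dichotomy you set up is left unresolved exactly where your counting argument has no slack. The fix is local rather than global: since $f$ is injective on $\DD$, the two images $f(\DD\cap B(\zeta_j^{\pm},\epsilon))$ are disjoint one-sided neighborhoods of the two tangent branches at the double point, so $\Omega$ occupies the two outer sides and $\inte(\Omega^c)$ near the double point consists of exactly the two zero-angle wedges pinched between the branches. Hence every adjacent complementary component has interior angle $0$ there; for a bounded $U_j$ this is an outward Jordan cusp, while for $U_\infty$ (whose Jordan interior contains $\Omega$ and has angle $2\pi$ there) it is an inward Jordan cusp. This local fact also eliminates your ``smooth alternative'' for the bounded components outright, turning the rotation-count step into a consistency check rather than the load-bearing argument. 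A minor point: the representation $\Omega=f(\DD)$ or $f(\Delta)$ comes from the converse part of Lemma \ref{lem:suff-BQD} and its $(\Sigma_d)$ analogue, not from Theorem \ref{thm:D}, which is an existence statement.
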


\begin{proof} 
All facts follow from Corollary \ref{cor:kappa} and Theorems \ref{thm:geom2} and \ref{thm:geom1}.

To show that, for a bounded component of $(\clos\Omega)^c$, at least one of the arc has variation of the tangent direction less than $\pi$, we use the equation \eqref{eq:fullrotate}: the total variation of the unit tangent vector over all smooth arcs of a deltoid-like curve is 
$$\int| k|ds=\left|\int kds\right|=\pi,$$
so not only one but all three arcs of a deltoid-like curve have angular variation less than $\pi$.
\end{proof}

The following is the main result of this section.

\bigskip
\begin{thm}\label{thm:card-in-delt} Let $C$ be  a cardioid-like curve and $T$ a deltoid-like curve.  
Then we can inscribe $C$ in $T$ so that $C$ intersects all three sides of $T$ and intersects $T_\text{conc}$ twice. \end{thm}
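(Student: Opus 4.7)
The plan is a variational maximal-dilation argument, with the contact pattern forced by the curvature and cusp hypotheses on $C$ and $T$.

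\textbf{Variational setup.} Let $\Omega_C, \Omega_T$ be the bounded Jordan domains enclosed by $C$ and $T$, and let $G$ be the four-dimensional group of orientation-preserving similarities of $\CC$ with dilation $\lambda \colon G \to \RR_+$. The admissible set
\[
\mathcal{A} = \{g \in G : g(\overline{\Omega_C}) \subseteq \overline{\Omega_T}\}
\]
is non-empty and closed, and $\lambda|_\mathcal{A}$ is bounded above by a ratio of diameters. A maximizing sequence for $\lambda$ has bounded translation (since all images lie in the bounded set $\Omega_T$), so extraction yields a maximizer $g^* \in \mathcal{A}$; its image $C^* = g^*(C)$ meets $T$ tangentially at a finite set $\Sigma$.

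\textbf{Contact count.} At each $P \in \Sigma$ the constraint $g(\overline{\Omega_C}) \subseteq \overline{\Omega_T}$ gives a blocking covector $\omega_P \in T_{g^*}^* G \cong \RR^4$ expressing ``no infinitesimal motion pushes $C^*$ past $T$ at $P$.'' Farkas' lemma at the maximizer forces $d\lambda$ to lie in the positive conic hull of $\{\omega_P : P \in \Sigma\}$, and a dimension count in $\RR^4$ shows that at an isolated maximum one must have $|\Sigma| \geq 4$.

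\textbf{Distribution of contacts.} The curvature data now dictate how the contacts distribute among the three arcs of $T$. Each of the two arcs of $T$ other than $T_\text{conc}$ has positive curvature with respect to $\Omega_T$ (by Corollary \ref{cor:kappa} and Proposition \ref{prop}) and supports at most one tangential contact with the positively curved curve $C^*$: a second tangential contact on such a convex arc would force $C^*$ to exit $\Omega_T$ by a standard curvature-comparison argument. The arc $T_\text{conc}$, having total tangent variation strictly less than $\pi$, is contained in an open half-plane; one can then bound the number of tangential contacts of the convex-except-cusp curve $C^*$ with $T_\text{conc}$ by two via a Gauss--Bonnet-type rotation index on the lens regions between consecutive contacts. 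Finally, the inward cusp of $C^*$ is geometrically incompatible with any of the outward cusps of $T$, excluding cusp-on-cusp tangencies. Combining these bounds with $|\Sigma| \geq 4$ forces exactly two contacts on $T_\text{conc}$ and at least one on each of the other two arcs, as required.

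\textbf{Main obstacle.} The most delicate step is the upper bound of two contacts on $T_\text{conc}$, where the tangent-variation hypothesis $<\pi$ is essential. I expect to carry this out by an integrated curvature comparison: given three hypothetical consecutive contacts on $T_\text{conc}$, the two $C^*$-arcs connecting them, together with the intervening pieces of $T_\text{conc}$, enclose lens regions whose boundaries have total tangent rotation $\pm 2\pi$; computing this rotation in terms of the (positive) $C^*$-curvature contributions and the (negative) $T_\text{conc}$-contributions, and comparing with the strict bound $<\pi$ on the tangent variation of $T_\text{conc}$, yields the contradiction. A secondary subtlety is to exclude degenerate maximizers with contacts falling at the three outward cusps of $T$, which is handled by the local incompatibility between an inward cusp of $C^*$ and an outward cusp of $T$ together with a short transversality argument.
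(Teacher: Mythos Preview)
Your variational setup is natural, but the distribution-of-contacts step rests on a false premise. You assert that ``each of the two arcs of $T$ other than $T_\text{conc}$ has positive curvature with respect to $\Omega_T$,'' citing Corollary~\ref{cor:kappa} and Proposition~\ref{prop}. This is not true. The definition of \emph{deltoid-like} requires only that \emph{at least one} arc be concave with tangent variation $<\pi$; the other two arcs are unconstrained. More to the point, in the extreme-quadrature-domain case that motivates the theorem, \emph{all three} arcs are concave: Corollary~\ref{cor:kappa} gives the conformal curvature of $f(\TT)$ as $(1+d)/2>0$ when viewed from $f(\DD)$, which means the curvature seen from the complementary component $U_j$ is \emph{negative} on every arc (this is exactly how Lemma~\ref{lem:3} is applied in Theorems~\ref{thm:geom2} and~\ref{thm:geom1}, and the proof of Proposition~\ref{prop} notes that all three arcs have angular variation $<\pi$). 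So your ``at most one contact on a convex arc'' argument has no convex arc to act on.

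Once that step fails, nothing forces the $\ge 4$ contacts to spread over all three sides. If every arc is concave with variation $<\pi$, your own Gauss--Bonnet reasoning bounds each arc by two contacts, and the pattern $(2,2,0)$ is perfectly compatible with $|\Sigma|\ge 4$. Secondary gaps: the dimension count ``$|\Sigma|\ge 4$ at an isolated maximum'' needs more than Farkas (you have not shown the maximum is isolated, and KKT only puts $d\lambda$ in the positive cone of the $\omega_P$, which can happen with a single $\omega_P$ in the right direction); and you must also handle the case where a smooth point of $C^*$ lands on an outward cusp of $T$, not just cusp-on-cusp.

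The paper avoids all of this by not maximizing over the full similarity group. Instead it parametrizes candidate cardioids by a pair $(p,q)\in T_\text{conc}\times T_\text{conc}$ via Lemma~\ref{lem:2tangents} (the unique cardioid tangent to $T_\text{conc}$ at $p$ and $q$), so two contacts on $T_\text{conc}$ are built in from the start. It then looks at the set $Z_\text{out}$ of pairs for which the cardioid pokes outside $T$; by Lemma~\ref{lem:touch}, on $\partial Z_\text{out}$ the cardioid must touch one of the other two sides. A topological separation lemma shows $\partial Z_\text{out}$ contains a connected piece joining the two endpoints of $T_\text{conc}$, and a closed-cover argument along that continuum produces a pair where both remaining sides are touched. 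No curvature hypothesis on the other two arcs is used.
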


The proof of the theorem is in the next two subsections.

\subsection{Three lemmas}

\begin{lemma}[Separation Lemma]\label{lem:sep}
Let $\Omega$ be a Jordan domain and $A\subset\clos\Omega$ be a closed connected subset such that $\Gamma=A\cap\partial \Omega$ is a simple arc with the endpoints at $P$ and $Q$.  We further assume that any point in $\Gamma_o=\Gamma\setminus\{P,Q\}$ has a neighborhood that does not intersect $\clos\Omega\setminus A$. 
Then $\partial A\cap\Omega$ has a connected component whose closure contains $P$ and $Q$.
\end{lemma}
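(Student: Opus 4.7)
My plan is to exhibit a connected subset of $(\partial A\cap\Omega)\cup\{P,Q\}$ that contains both $P$ and $Q$; any component of $\partial A\cap\Omega$ meeting such a subset will automatically have $P$ and $Q$ in its closure, yielding the lemma. After reducing to $\Omega=\DD$ via a homeomorphism (the Carathéodory extension of a Riemann map, since $\Omega$ is a Jordan domain), I would write $\Gamma'_o:=\partial\Omega\setminus\Gamma$ and single out the unique connected component $V$ of $U:=\Omega\setminus A$ whose closure contains $\Gamma'_o$. Such a $V$ exists and is unique because each point of $\Gamma'_o$ admits a half-disk neighborhood in $U$ (as $A$ is closed with $A\cap\Gamma'_o=\emptyset$) and $\Gamma'_o$ is connected.

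The next step would be to establish two key properties of $V$. First, $V$ is simply connected in $S^2=\widehat\CC$: the set $A\cup\partial\Omega$ is a continuum, each other complementary component of $S^2\setminus(A\cup\partial\Omega)$ has boundary inside $A\cup\partial\Omega$, and hence $S^2\setminus V$ is connected. Second, the hypothesis that $A$ is a neighborhood of $\Gamma_o$ in $\clos\Omega$ yields $\partial V\cap\Omega\subset\partial A\cap\Omega$ together with $\clos(\partial V\cap\Omega)\cap\partial\Omega\subset\{P,Q\}$, and in fact both $P$ and $Q$ belong to this closure: otherwise $V$ would locally fill a whole half-disk neighborhood of one endpoint, contradicting the presence of the $\Gamma_o$-collar of $A$ extending down to that endpoint.

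The heart of the argument is to show that $\partial V\setminus\Gamma'_o=(\partial V\cap\Omega)\cup\{P,Q\}$ is connected. I plan to apply Carathéodory's prime-end theory: since $V$ is simply connected in $S^2$, its prime ends form a topological circle. Each $y\in\Gamma'_o$ corresponds to a single prime end with singleton impression $\{y\}$ (because $V$ touches $\Gamma'_o$ only from one side through a tangent half-disk), so these prime ends form an open arc $J$ on the prime-end circle, with complement the closed arc $J^c$ whose two endpoints represent the ``first'' prime ends of $V$ at $P$ and $Q$ respectively. The union of the impressions of the prime ends in $J^c$ is exactly $\partial V\setminus\Gamma'_o$, and by the standard fact that an upper-semicontinuous family of continua over a connected parameter space has connected union (proved by checking that the closed graph, with connected fibers over a connected base, is itself connected), this union is connected.

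The main obstacle is this last connectedness step; an elementary but more tedious alternative would be to exhaust $V$ by Jordan subdomains via a sequence of crosscuts accumulating at $\Gamma'_o$ and pass to a limit. Once the connected set $\partial V\setminus\Gamma'_o\subset(\partial A\cap\Omega)\cup\{P,Q\}$ has been produced with both $P$ and $Q$ as members, the component of $\partial A\cap\Omega$ that meets it has both $P$ and $Q$ in its closure, completing the proof.
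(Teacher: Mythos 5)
Your proposal follows essentially the same route as the paper's proof: both isolate the complementary component of $A$ adjacent to $\partial\Omega\setminus\Gamma$ (your $V$, the paper's $B\subset\clos\Omega\setminus A$), observe it is simply connected and that its boundary inside $\Omega$ lies in $\partial A\cap\Omega$, and then extract from $\partial V$ a connected set joining $P$ to $Q$ through $\Omega$. The only difference is one of detail: where the paper simply asserts that the boundary of the simply connected set $B$ is ``doubly-connected'' and hence contains a second path from $P$ to $Q$ besides $\partial\Omega\setminus\Gamma$, you substantiate this via prime ends and upper semicontinuity of impressions, which is a legitimate (if heavier) way to fill in the same step.
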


\begin{proof} 

Since $A$ is connected, $(\clos\Omega)\setminus A$ has only simply-connected components.
Let $B$ be the simply connected component of $(\clos\Omega)\setminus A$ that contains $\partial\Omega\setminus\Gamma$.
By this definition $\partial B$ contains $P$ and $Q$.  
Observe that $\partial B$ does not intersect $\Gamma_o$ because of the assumption that any point in $\Gamma_o$ has a neighborhood in $$(\clos\Omega\setminus A)^c=(\clos\Omega)^c \sqcup A,$$
which lies in the complement of $B$.  

Now consider the following decomposition:
$$ \partial B = (\partial B\cap\Omega)~\sqcup~(\partial\Omega\setminus\Gamma)~ \sqcup~\{P,Q\}.$$
Since $B$ is simply-connected, $\partial B$ is doubly-connected.  That is, if $\{P,Q\}\subset \partial B$ and $P\neq Q$ then there are two separate paths that connect $P$ and $Q$.  We know that $\partial\Omega\setminus\Gamma$ gives one of them.  Therefore, the other path must exist in $\partial B\cap\Omega$.  It will be enough to show that $\partial B\cap\Omega\subset\partial A\cap\Omega$.

From the definition of $B$, we have
$$\partial B ~\subset~ \partial (\clos\Omega\setminus A)~\subset~ \partial\Omega \cup \partial A,$$
and we get
$  \partial B\cap\Omega ~\subset~ \partial A\cap\Omega$. 
\end{proof} 

\begin{remark}
Below we will consider the relative position of a geometric object (e.g. cardioid) with respect to the other geometric object (e.g. $\Gamma$).   The expression ``unique cardioid'' in the lemma implies that, fixing the latter geometric object in space, there exists a {\em unique transformation} of a cardioid where the transformation consists of {\em rotation, translation and uniform scaling}.
\end{remark}
\bigskip

\begin{lemma}\label{lem:2tangents}  Let $\Gamma$ be a simple smooth arc with non-vanishing curvature and with the total variation of the angle of tangents being less than $\pi$.     Given two points $p$ and $q$ on $\Gamma$ there exists a unique cardioid that tangentially intersects $\Gamma$ at $p$ and $q$, such that the two arcs that connects $p$ and $q$ respectively to the cusp of the cardioid and the subarc of $\Gamma$ with its two end points at $p$ and $q$ form a deltoid-like curve with only concave arcs.
\end{lemma}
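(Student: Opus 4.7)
The proof proceeds by explicit parametrization and reduction to a single real transcendental equation. After applying a Euclidean similarity we may assume $p = 0$ with $\Gamma$'s tangent direction at $p$ along the positive real axis. Any cardioid can be written in the form $F(\zeta) = A + B(\zeta+1)^2$ with $A \in \CC$ the cusp position $F(-1)$ and $B \in \CC^*$ encoding scale and orientation (four real parameters in total). The requirement of tangential intersection at $p$ and $q$ amounts to producing parameters $\zeta_p = e^{it_p},\zeta_q = e^{it_q} \in \TT$ satisfying $F(\zeta_p) = p$ and $F(\zeta_q) = q$, together with parallelism of the velocity vectors $i\zeta F'(\zeta)$ at those parameters to the tangent vectors $T_p, T_q$ of $\Gamma$; this is a square system of six real equations in the six real unknowns $(A, B, t_p, t_q)$.

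The cardioid's rigidity enters through Corollary \ref{cor:kappa}(i): a cardioid has constant conformal curvature $3/2$. Subtracting the two tangent-matching equations yields
\begin{equation*}
\tfrac{3}{2}(t_q - t_p) \equiv \alpha \pmod{\pi}, \qquad \alpha := \arg T_q - \arg T_p,
\end{equation*}
where $\alpha$ is the signed total turning of $\Gamma$ between $p$ and $q$, with $|\alpha| < \pi$ by hypothesis. This determines $t_q - t_p$ up to addition of multiples of $2\pi/3$, giving three candidate values. The requirement that the cardioid's intrinsic cusp at $\zeta = -1$ appear as a cusp of the composite (so that the composite has exactly three cusps), together with the demand that all three arcs of the composite be concave (Lemma \ref{lem:3}), singles out the unique geometrically admissible branch.

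Fixing $u = (t_q - t_p)/2$ from this branch and setting $s = (t_p + t_q)/2$, the identity $(\zeta + 1)^2 = 4\cos^2(t/2)\,e^{it}$ combined with the position equation $F(\zeta_q) - F(\zeta_p) = q - p$ determines $B$ as an explicit function of $s$ (up to a positive real scalar). Substituting back into the tangent condition at $p$ collapses the remaining system to a single real equation of the form
\begin{equation*}
\arg\bigl(1 + (\cos u)\, e^{is}\bigr) \equiv \tfrac{s}{2} + \mathrm{const} \pmod{\pi}.
\end{equation*}
The left-hand side is bounded in absolute value by $\arcsin|\cos u| < \pi/2$, while the right-hand side is strictly increasing in $s$ with slope $1/2$; on the admissible range of $s$ (of total length less than $2\pi$) a direct monotonicity and intermediate value argument produces a unique solution, which in turn determines the cardioid $(A, B)$ and thus the cusp position.

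The main obstacle is not the existence and uniqueness of this parameter but the verification that the cardioid so obtained actually produces a valid deltoid-like composite. Specifically, one must check that the two cardioid arcs joining the cusp to $p$ and $q$ do not re-meet $\Gamma$ outside of these endpoints, that all three arcs of the composite carry the correct sign of curvature (concave with respect to the composite's orientation), and that the composite is a Jordan curve. These properties should follow from the bounded total turning hypothesis on $\Gamma$, the positive curvature of smooth cardioid arcs (which becomes concavity once one reverses the orientation to match the deltoid), and a separation argument in the spirit of Lemma \ref{lem:sep}, but the geometric bookkeeping is delicate.
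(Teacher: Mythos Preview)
Your approach is genuinely different from the paper's and is in principle workable, but it is incomplete in places you yourself flag, and in one place you do not.

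The paper's proof is purely synthetic. Its key reduction is that the only data needed from $\Gamma$ are the two angles $\theta_0,\theta_1$ between the tangent lines at $p,q$ and the chord $\overline{pq}$; since $\theta_0+\theta_1<\pi$, those tangent lines meet on the convex side of $\Gamma$. One then works on a \emph{fixed} standard cardioid: pick $P$ on the cardioid, draw from $P$ the ray making angle $\theta_0$ with the tangent there (on the exterior side), and let $Q$ be the first point where this ray re-enters the cardioid. The angle $\theta_Q$ at $Q$ varies continuously and strictly monotonically from $0$ to $\pi-\theta_0$ as $P$ slides toward the cusp, so IVT gives a unique $(P,Q)$ with $\theta_Q=\theta_1$. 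A similarity carrying $(P,Q)\mapsto(p,q)$ finishes. The orientation (triangle outside the cardioid, cusp on the correct side) is built into this one-parameter sweep from the start, so no branch selection is needed after the fact.

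Your parametric route also lands on a one-variable monotonicity argument, and your final equation is correct: writing $F(\zeta)=A+B(\zeta+1)^2$ and using $(e^{it}+1)^2=4\cos^2(t/2)e^{it}$, the position difference indeed factors as $4B\,ie^{is}\sin u\,(1+e^{is}\cos u)$, and the derivative computation shows $s\mapsto \arg(1+(\cos u)e^{is})-s/2$ is strictly monotone since $|\cos u|<1$. What is missing is:
\begin{itemize}
\item You assert that the concavity/cusp requirements ``single out the unique geometrically admissible branch'' among the three values of $u$, but you give no argument. In the paper's setup this issue never arises because the outward-triangle condition is imposed at the outset of the sweep; in your setup you must actually prove that exactly one of the three branches yields the cusp between the two contact points with the correct orientation.
\item Your IVT step establishes at most one solution, but existence requires checking that the target value actually lies in the range of $g$ over the admissible interval for $s$ (which is strictly smaller than a full period). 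You do not identify the endpoints or their images.
\item As you acknowledge, the verification that the composite is a Jordan curve with three concave arcs is left open. The paper's geometric picture makes this relatively transparent (the cardioid sits in the region $\Sigma_{pq}$ cut out by the tangent lines, the $\Gamma$-arc is on the opposite side of $\overline{pq}$), whereas from your algebraic solution this would require additional work.
\end{itemize}
In short, the paper's reduction to the two angles $(\theta_0,\theta_1)$ and the single geometric sweep avoids the branch ambiguity and makes the endpoint/existence check immediate; your algebraic approach can be completed, but as written it is a sketch with real gaps at the branch selection and the final geometric verification.
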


\begin{figure}[ht]
\begin{center}
\includegraphics[width=0.9\textwidth]{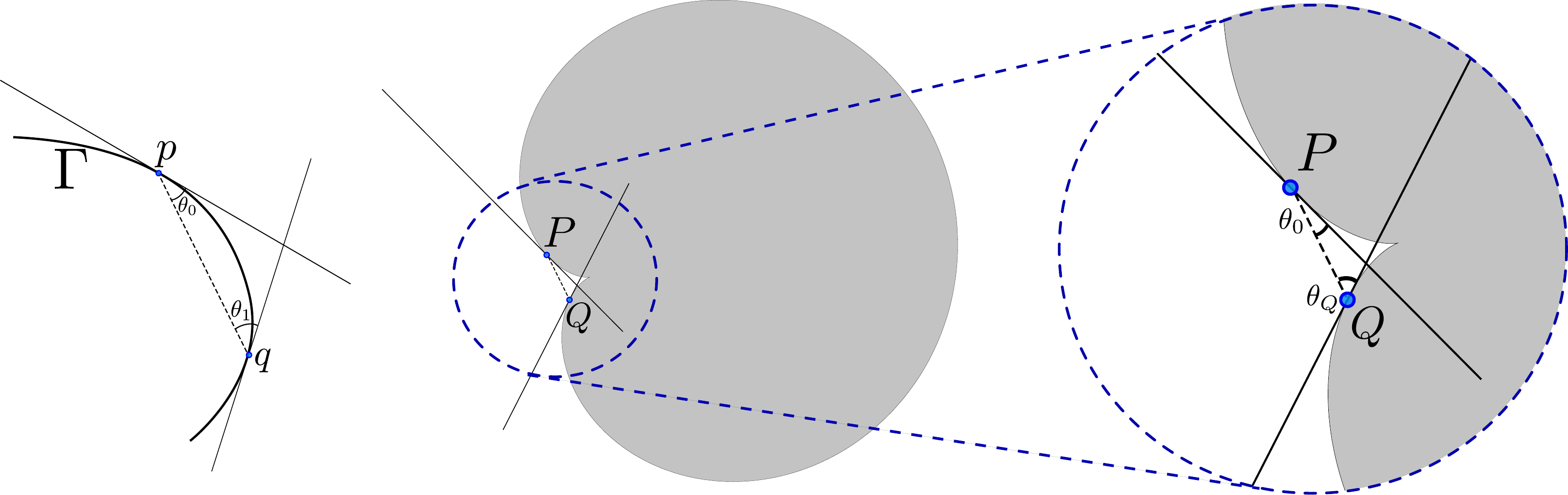}
\caption{\label{fig:2tangents} Proof of Lemma \ref{lem:2tangents}.  The rightmost picture shows the zoom-in.  }
\end{center}
\end{figure}

\begin{proof}
Let $p$ and $q$ are two points on $\Gamma$.  Let $\overline{pq}$ be the straight line segment between $p$ and $q$. We denote the angle between the tangent line at $p$ and $\overline{pq}$ by $\theta_0$, and the angle between the tangent line at $q$ and $\overline{pq}$ by $\theta_1$, see Figure \ref{fig:2tangents}.  By the definition of $\Gamma$, we have $\theta_0+\theta_1<\pi$ and, therefore, the two tangent lines at $p$ and $q$ meet on the convex side of $\Gamma$.  Note that the tangent lines at $p$ and $q$ divides the plane into four regions and $\Gamma$ sits in (the closure of) one of them, that we denote by $\Sigma_{pq}$.

We claim that, for any positive angles $\theta_0$ and $\theta_1$ such that $\theta_0+\theta_1<\pi$, there exists two points, say $P$ and $Q$, on the cardioid such that the triangle formed by the two tangent lines at $P$ and $Q$ and $\overline{PQ}$ has the following two properties: i) the triangle sits {\em outside} the cardioid, ii) the triangle has the angles $\theta_0$ and $\theta_1$ respectively at its vertices $P$ and $Q$.  Then, the existence part of the lemma follows because one can simply match $P$ and $Q$ to the point $p$ and $q$ by some ``rigid transformation + uniform scaling'' of the cardioid.

We show the above claim geometrically.  Pick a point $P$ on the cardioid.  Consider two lines that pass through $P$: one is the tangent line at $P$, and the other line is rotated by the angle $\theta_0$ (the dashed line in the rightmost picture in Figure \ref{fig:2tangents}).  Let $Q$ be the {\em first} intersection of the latter line with the cardioid such that $\overline{PQ}$ sits outside the cardioid.  Such $Q$ may not exist depending on the position of $P$.   When $Q$ exists, we denote the angle between the tangent at $Q$ and $\overline{PQ}$ by $\theta_Q$, see Figure \ref{fig:2tangents}.

As one moves $P$ along the cardioid towards the cusp point, one can obverse that the angle $\theta_Q$ varies over $[0,\pi-\theta_0)$; the angle is $0$ when $\overline{PQ}$ is tangential at $Q$, and approaching $\pi-\theta_0$ when $P$ approaches the cusp point.  Therefore, there exists $P$ such that $\theta_Q=\theta_1$, and the claim is proven.

The uniqueness comes from the observation that $\theta_Q$ is monotonic from $0$ to $\pi-\theta_0$ as $P$ moves towards the cusp point.  The monotonicity can be obtained from the following argument.  As $P$ moves towards the cusp, $Q$ also moves towards the cusp, which means that the tangent line at $Q$ has the slope that changes monotonically.  By the positivity of the curvature, the slope of the ``dashed line'' (the tangent line at $P$ rotated by $\theta_0$) also changes monotonically.  Above two ``monotonicity'' gives the monotonicity of the angle $\theta_Q$.      
\end{proof}

\begin{lemma}\label{lem:touch} Let $T$ be a deltoid-like curve.  Given $p,q\in T_\text{conc}$, Lemma \ref{lem:2tangents} says that there is a unique cardioid that tangentially intersect $T_\text{conc}$ at $p$ and $q$. Denote that cardioid by $C_{pq}$.  Define 
$$Z_\text{out}=\{(p,q): p,q\in[\partial T]_\text{conc}; {\rm Int}\,C_{pq}\cap {\rm Ext}\,T\neq \emptyset\}.$$  
(Geometrically, this set corresponds to the case where the cardioid does {\em not} sit inside the deltoid-like curve.)
If $(p,q)\in\partial Z_\text{out}$ then $C_{pq}$ tangentially intersects at least one of the sides other than $ T_\text{conc}$.
\end{lemma}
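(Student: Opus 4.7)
The plan is to combine the openness of $Z_\text{out}$ with a limiting argument. As $(p,q)\in\partial Z_\text{out}$ is approached from inside $Z_\text{out}$, the small poke-out regions of the approximating cardioids must collapse onto tangential contact points of $C_{pq}$ with $\partial T$, and I will argue that at least one of these contacts lies on a side other than $T_\text{conc}$.

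First I would show that $Z_\text{out}$ is an open subset of $T_\text{conc}\times T_\text{conc}$: the cardioid $C_{pq}$ depends continuously on $(p,q)$ by the explicit construction in Lemma \ref{lem:2tangents}, and the condition ${\rm Int}\,C_{pq}\cap{\rm Ext}\,T\neq\emptyset$ is therefore open. Hence $(p,q)\in\partial Z_\text{out}$ forces $(p,q)\notin Z_\text{out}$, so ${\rm Int}\,C_{pq}\subseteq\overline{{\rm Int}\,T}$.

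Now I would pick an approximating sequence $(p_n,q_n)\to(p,q)$ with $(p_n,q_n)\in Z_\text{out}$ and, for each $n$, a connected component $U_n$ of ${\rm Int}\,C_{p_nq_n}\cap{\rm Ext}\,T$. Each $\partial U_n$ is a Jordan curve made of arcs of $C_{p_nq_n}$ and arcs of $\partial T$ meeting at transverse crossings. After passing to subsequences these crossings accumulate on $C_{pq}\cap\partial T$; a transverse limiting crossing would force ${\rm Int}\,C_{pq}$ to contain a region of ${\rm Ext}\,T$, contradicting $(p,q)\notin Z_\text{out}$, so the limiting contacts are tangential. A local second-order curvature comparison at $p,q$---the cardioid has positive curvature while $T_\text{conc}$ has negative curvature---shows $C_{pq}$ lies strictly on the ${\rm Int}\,T$ side of $T_\text{conc}$ in a neighborhood of $p$ and $q$, so $U_n$ stays bounded away from $\{p,q\}$ for $n$ large, and the new tangencies are genuinely distinct from $p$ and $q$.

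The decisive step, and the main obstacle, is showing that at least one of these new tangencies lies on $\partial T\setminus T_\text{conc}$. Here I would exploit the deltoid-like combination from Lemma \ref{lem:2tangents}: the through-cusp arc of $C_{pq}$, together with the subarc of $T_\text{conc}$ from $p$ to $q$, bounds the deltoid-like region inside ${\rm Int}\,T$, and the cardioid's cusp sits deep inside ${\rm Int}\,T$ at the greatest distance from $T_\text{conc}$. The first obstruction to $C_{pq}$ fitting inside $T$ should therefore arise when the through-cusp portion collides with $\partial T$ on a side other than $T_\text{conc}$. The delicate point is ruling out the pathological scenario in which the obstruction is produced only by an extra tangency of $C_{pq}$ with $T_\text{conc}$ itself; handling this requires combining the strict second-order curvature comparison at such a putative extra tangency with the global shape restriction imposed on $C_{pq}$ by the deltoid-like combination, in order to force the through-cusp portion to also touch one of the other two sides, yielding the required tangency off $T_\text{conc}$.
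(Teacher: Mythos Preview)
Your first two steps coincide with the paper: $Z_\text{out}$ is open by continuity of $(p,q)\mapsto C_{pq}$, so any boundary point satisfies ${\rm Int}\,C_{pq}\cap{\rm Ext}\,T=\emptyset$. From there on you take a detour through limits of poke-out regions $U_n$ that leaves you with the very gap you flag in your final paragraph; you have not closed it.

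The paper sidesteps this entirely with a \emph{second} openness argument, run in the contrapositive. Suppose $C_{pq}\cap(T\setminus T_\text{conc})=\emptyset$. By continuity of the map $(p,q)\mapsto C_{pq}$, this condition also holds on a neighborhood $V$ of $(p,q)$. The paper then observes that on $V$ one still has ${\rm Int}\,C_{p'q'}\cap{\rm Ext}\,T=\emptyset$, so $V\cap Z_\text{out}=\emptyset$, contradicting $(p,q)\in\partial Z_\text{out}$. The only point to check is that the cardioid cannot start poking out through $T_\text{conc}$ under a small perturbation; but every contact of $C_{p'q'}$ with $T_\text{conc}$ is a tangency from the convex side (this is built into the construction of Lemma~\ref{lem:2tangents}), and at such a contact the cardioid has positive curvature while $T_\text{conc}$ has negative curvature, so the two curves separate to second order and no crossing can appear. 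The tangentiality of the resulting contact with a side other than $T_\text{conc}$ is then automatic from ${\rm Int}\,C_{pq}\cap{\rm Ext}\,T=\emptyset$.

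So the curvature comparison you reach for in your last paragraph is indeed the right local ingredient---but in the paper's framing it is used once, to say ``tangencies on $T_\text{conc}$ are stable under perturbation,'' and that finishes the argument. There is no need for the approximating sequence, the components $U_n$, or the analysis of accumulating crossing points; the contrapositive route makes the ``pathological scenario'' you worry about disappear.
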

\begin{proof}  
$Z_\text{out}$ is an open set, because $C_{pq}$ is continuous in $(p,q)\in\partial T_\text{conc}\times\partial T_\text{conc}$ and, therefore, if ${\rm Int}\,C_{pq}\cap{\rm Ext}\,T\neq\emptyset$ then there exists a neighborhood of $(p,q)$ such that the intersection remains non-empty.
Therefore, if $(p,q)\in\partial Z_\text{out}$, then we have
$${\rm Int}\,C_{pq}\cap {\rm Ext}\,T = \emptyset.$$
Also, if $(p,q)\in\partial Z_\text{out}$, we also claim that
$$ C_{pq}~\cap~(T\setminus  T_\text{conc}) ~\neq~ \emptyset.$$  
If this is not true, then there exists an open neighborhood of $(p,q)\in T_\text{conc}\times T_\text{conc}$ such that the equation continues to not hold (again by the continuity of the maping $(p,q)\mapsto C_{pq}$). In such neighborhood ${\rm Int}\,C_{pq}\cap{\rm Ext}\,T$ stays empty and this contradicts the fact that $(p,q)\in\partial Z_\text{out}$. 
\end{proof}

\subsection{Proof of Theorem }

Let the two end points of $T_\text{conc}$ be $p_L$ and $p_R$.  Let us consider a parametrization of $T_\text{conc}$ by $\gamma:[0,1]\to T_\text{conc}$ such that $p_L=\gamma(0)$ and $p_R=\gamma(1)$ so that one can say $p<q$ or $[p,q]$ in terms of the corresponding preimages on $[0,1]$ through the bijection $\gamma$. 
Then one can observe that the set $Z_\text{out}$ contains $\{p_L\}\times (p_L,p_R]$  and $[p_L,p_R)\times\{p_R\}$.    And $Z_\text{out}$ does not include the diagonal point $(p,p)$ for all $p\in T_\text{conc}$.  Applying Lemma \ref{lem:sep} with 
$$ \clos\Omega= \{(p,q)\in T_\text{conc}\times T_\text{conc}~|~p<q\} \quad\text{and}\quad A=\clos Z_\text{out},
$$ 
we obtain that $\partial Z_\text{out}$ connects $(p_L,p_L)$ and $(p_R,p_R)$. 

Let us denote the other two sides of $T$ other than $T_\text{conc}$ by $T_1$ and $T_2$ such that $p_L\in T_1$ and $p_R\in T_2$. We define
$$
A_{j}=\left\{(p,q)\in \partial Z_\text{out} \,\big|\, C_{pq}\cap T_j\neq\emptyset\right\} \text{ ~for~ j=1,2}.
$$
Both $A_1$ and $A_2$ are closed subsets of $\partial Z_\text{out}$ and $A_1\cup A_2=\partial Z_\text{out}$ by Lemma \ref{lem:touch}.  On the other hand, $A_1$ contains $(p_L, p_L)$ while $A_2$ contains $(p_R,p_R)$.  Since $\partial Z_\text{out}$ connects $(p_L,p_L)$ and $(p_R,p_R)$, there must exist a point in $\partial Z_\text{out}$ that belongs to $A_1\cap A_2$.  This proves Theorem \ref{thm:card-in-delt}.

\subsection{Inscribed circles}

We will also inscribe circles in deltoid-like curves.
\bigskip

\begin{thm}\label{thm:circ-in-delt} Let  $T$ be a  deltoid-like curve. Then we can inscribe a circle in $T$ so that the circle intersects all three sides of $T$. \end{thm}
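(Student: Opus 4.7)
The argument is a circle version of the inscribed cardioid proof of Theorem~\ref{thm:card-in-delt}. Since a circle has only three real parameters (one fewer than a cardioid), I cannot parametrize the relevant one-parameter family by a pair of tangent points on $T_\text{conc}$; instead I parametrize inscribed tangent disks by a single tangent point $p\in T_\text{conc}$ together with the radius $r>0$.

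Let $\Omega$ be the Jordan domain bounded by $T$, with $p_L,p_R$ the endpoints of $T_\text{conc}$, and denote by $T_1$ and $T_2$ the other two arcs of $T$, meeting $T_\text{conc}$ at the cusps $p_L$ and $p_R$ respectively. For $p\in T_\text{conc}$ let $N(p)$ be the inward unit normal to $T_\text{conc}$, and for $r>0$ let $\mathcal D(p,r)$ be the closed disk of radius $r$ centered at $p+rN(p)$. Strict concavity of $T_\text{conc}$ implies $\mathcal D(p,r)\subset\clos\Omega$ for all sufficiently small $r>0$, and the function
\begin{equation*}
r(p):=\sup\{r>0 : \mathcal D(p,r)\subset\clos\Omega\},\qquad \mathcal D(p):=\mathcal D(p,r(p)),
\end{equation*}
is continuous on $T_\text{conc}$ with $r(p_L)=r(p_R)=0$; by maximality each $\mathcal D(p)$ touches $\partial\Omega$ at some point distinct from $p$. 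Split $T_\text{conc}$ using the closed sets
\begin{equation*}
A_j=\{p\in T_\text{conc}:\mathcal D(p)\cap T_j\neq\emptyset\},\quad j=1,2,\qquad A_0=\{p:\mathcal D(p)\cap T_\text{conc}\neq\{p\}\},
\end{equation*}
so that $A_0\cup A_1\cup A_2=T_\text{conc}$. A cusp-pinching estimate shows that a neighborhood of $p_L$ in $T_\text{conc}$ lies in $A_1$ (the maximal disk near the cusp $p_L$ is forced to touch $T_1$), and symmetrically a neighborhood of $p_R$ lies in $A_2$.

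If $A_1\cap A_2\neq\emptyset$, any $p^\star$ in the intersection yields a disk $\mathcal D(p^\star)$ tangent to all three arcs of $T$, which proves the theorem. Otherwise $A_1,A_2$ are disjoint nonempty closed subsets of the connected arc $T_\text{conc}$, separated by some open sub-arc $U\subset T_\text{conc}\setminus(A_1\cup A_2)\subset A_0$. For $p\in U$ the disk $\mathcal D(p)$ is bitangent to $T_\text{conc}$ at $p$ and a second point $\sigma(p)\in T_\text{conc}$, and since $\mathcal D(p)=\mathcal D(\sigma(p))$, the map $\sigma$ is a continuous involution on $T_\text{conc}\setminus(A_1\cup A_2)$. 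Because $\mathcal D(p)$ has center on the $\Omega$-side of $T_\text{conc}$ while the osculating circle of $T_\text{conc}$ at $p$ has center on the opposite (concave) side, no osculating fit is possible, so $\sigma$ has no fixed points. A continuous fixed-point-free involution on a connected open interval is impossible by the intermediate value theorem, so $U$ cannot be a single interval; the multi-component case is handled by tracking the pairing $\sigma$ imposes on the components and its endpoint limits in $A_1\cup A_2$, yielding the required contradiction.

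The main obstacle is precisely this bitangent-involution analysis, particularly when the gap $T_\text{conc}\setminus(A_1\cup A_2)$ is disconnected: then $\sigma$ can pair distinct components rather than act on a single interval, so the naive fixed-point argument does not immediately apply. Ruling out such pairings requires a careful endpoint-limit analysis, using that as $p$ approaches the $A_j$-end of a component the bitangent disk limits to a disk additionally tangent to $T_j$, and combining this with the disjointness of $A_1$ and $A_2$ on the ordered arc $T_\text{conc}$. Continuity of $\sigma$, uniqueness of the second tangent point, and the cusp-pinching estimate are routine given the local geometry from Proposition~\ref{prop}.
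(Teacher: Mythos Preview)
Your approach is the same as the paper's: parametrize inscribed tangent disks by a single point $p\in T_\text{conc}$, take the maximal such disk, and run a connectedness argument on the closed sets $A_1,A_2\subset T_\text{conc}$ where the maximal disk touches $T_1$ or $T_2$. The paper's proof is exactly this, and stops there.

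The extra set $A_0$ and the bitangent involution $\sigma$ are unnecessary, because in fact $A_0=\emptyset$. This is the geometric point you missed. Since $T_\text{conc}$ has negative curvature and total tangent variation less than $\pi$ (this is part of the definition of ``deltoid-like''), the entire arc $T_\text{conc}$ lies in the closed half-plane on the \emph{exterior} side of the tangent line at any $p\in T_\text{conc}$. Indeed, choose coordinates so that the tangent at $p=\gamma(s_0)$ is the $x$-axis and the inward normal is $+y$; then the tangent angle $\alpha(s)$ is strictly monotone with $|\alpha(s)|<\pi$, so $y'(s)=\sin\alpha(s)$ has a fixed sign on each side of $s_0$, forcing $y(s)<0$ for $s\neq s_0$. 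On the other hand $\mathcal D(p,r)\subset\{y\ge0\}$ for every $r>0$. Hence $\mathcal D(p,r)\cap T_\text{conc}=\{p\}$ for every $r$, and the maximal disk is forced to touch $T_1$ or $T_2$. You actually use the local version of this observation (``the osculating circle has center on the opposite side'') to argue $\sigma$ has no fixed points; the tangent-variation bound upgrades it to a global separation.

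With $A_0=\emptyset$, your argument collapses to: $A_1,A_2$ are closed, cover the connected arc $T_\text{conc}$, and are each nonempty (by your cusp-pinching observation near $p_L,p_R$), so $A_1\cap A_2\neq\emptyset$. This is precisely the paper's proof. Your involution detour for the hypothetical multi-component case is, as you yourself note, the ``main obstacle'' and is left incomplete; fortunately it never arises.
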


\begin{proof}
The details of the proof are parallel to (and simpler than) the proof of Theorem \ref{thm:card-in-delt}, which will follow.  First, given a deltoid-like curve $T$, there exists $T_\text{conc}$ by the definition.  Given a point $p$ on $T_\text{conc}$ and a positive number $r$, there exists a unique circle of radius $r$ that intersects at $p$ such that the circle is on the {\em convex} side of $T_\text{conc}$ (this corresponds to Lemma \ref{lem:2tangents} for the case of Theorem \ref{thm:card-in-delt}).  For each $p$ there exists the minimal radius, say $r_0(p)$, such that the above circle of radius $r_0(p)$ intersecting at $p$ intersects at least one of sides of $T$ other than $T_\text{conc}$ -- the set $\{(p,r_0(p))\}$ corresponds to $\partial Z_\text{out}$ where $Z_\text{out}$ is defined in Lemma \ref{lem:touch}.  Then each point $p\in T_\text{conc}$ can be identified with the either of the two sides (other than $T_\text{conc}$) that the corresponding circle intersects; let us define $A_1$ and $A_2$ to be the set of points $p$ such that the corresponding circle intersects with the side one and the other, respectively.  Since $A_1$ and $A_2$ are both closed sets in $T_\text{conc}$, and $T_\text{conc}=A_1\cup A_2$, there exists an element in $A_1\cap A_2$.  This proves our theorem.   
\end{proof}

\bigskip

\section{Proof of the sharpness results}\label{sec:main-proof}

In this section we will prove Theorems \ref{thm:UQD} and \ref{thm:BQD}. In each case we  construct a union of disjoint quadrature domains such that the interior of their complement has the required number of connectivity components. This number will be equal to the connectivity of a single quadrature domain if we slightly perturb the picture by means of a Hele-Shaw flow. The perturbation procedure is explained in detail in Section 5 of \cite{1stpaper}. It is crucial that the location and multiplicities of the poles do not change as a result of the perturbation.

\subsection{Unbounded quadrature domains}

We first construct unbounded quadrature domains $\Omega$ such that all nodes are finite (the first part of Theorem \ref{thm:UQD}).

The case $n=d$ was covered in \cite{1stpaper}, Lemma 5.1.
We will consider the case $n<d$ and find $\Omega$ such that
$$\conn(\Omega) =d+n-1.$$ 
Given a partition 
$$d=\mu_1+ \cdots+\mu_n$$
we assume, without loss of generality, that $\mu_1\ge 2$. 
Then we choose an extreme BQD $\Omega_1$ of order $\mu_1$, and we inscribe $\Omega_1$ in an open round disc $U_1$ so that
$$\# (\partial U_1\cap \partial \Omega_1)\ge2,$$
see the first configuration in Figure \ref{fig:initialUQD} for the case $\mu_1=2$ where $\Omega_1$ is just a cardioid.  By Theorem \ref{thm:geom1} there are at least $\mu_1$ components in the interior of the complement of the (disjoint) union of the quadrature domains $\Omega_1$ and $\Omega_\infty:= \inte U^c_1$.   

We are done if $n=1$: a slight Hele-Shaw perturbation transforms $\Omega_1\sqcup\Omega_\infty$ into a UQD $\Omega$ with a single node  such that the order of $\Omega$ is given by $\mu_1=d$ and $\conn \Omega\ge \mu_1=d$.

\begin{figure}[ht]\begin{center}
\includegraphics[width=0.6\textwidth]{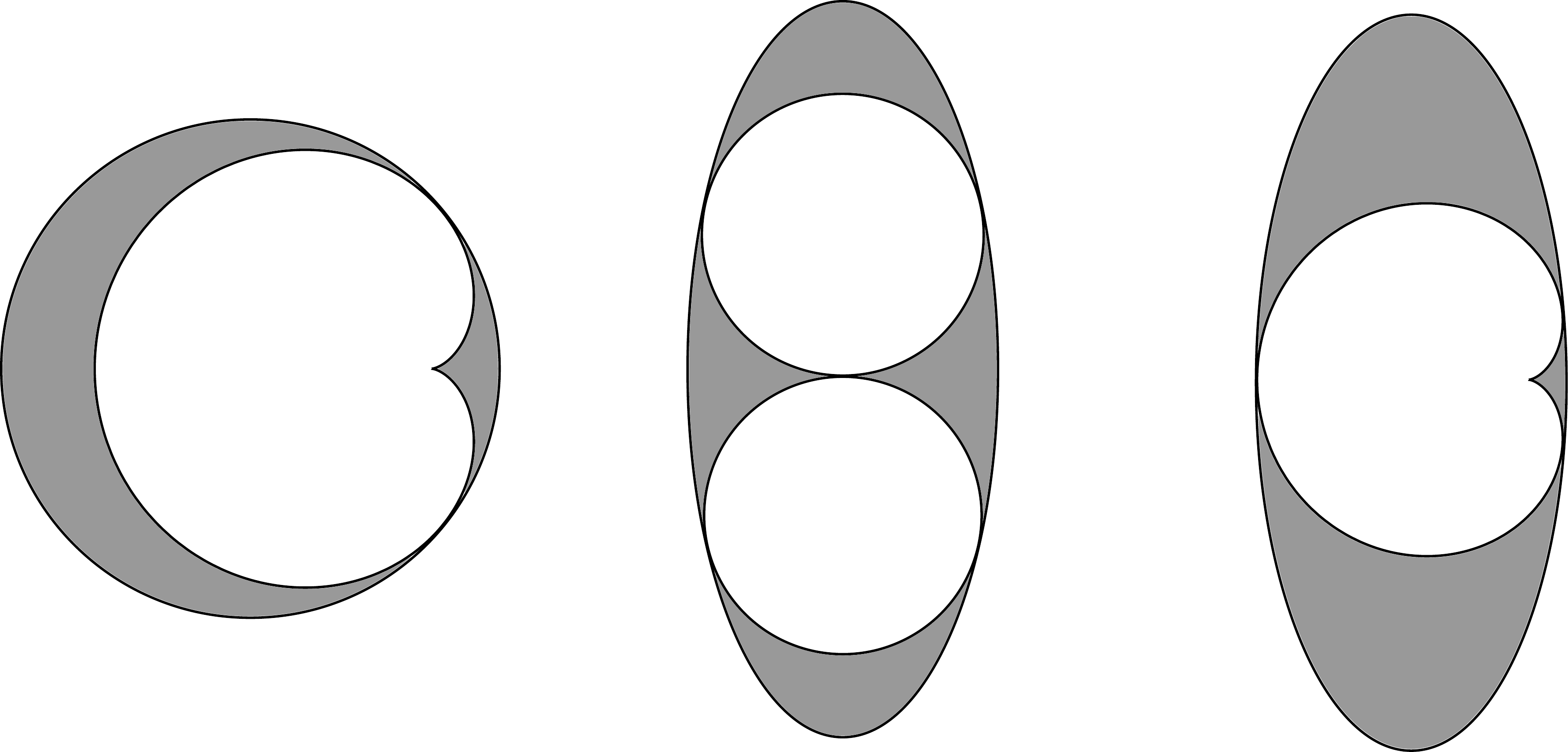}
\caption{Initial configurations for UQD\label{fig:initialUQD}}
\end{center}\end{figure}

If  $n>1$ then we proceed as follows.
By Proposition \ref{prop}, at least one of the components of the open set 
$U_1\setminus \clos \Omega_1$ is bounded by a deltoid-like curve.  (For example, the cusp of the cardioid-like boundary of $\Omega_1$ and the two intersections, $\partial\Omega_1\cap\partial U_1$, make the three cusps of the deltoid-like curve.)
Let us choose such a component and call it $U_2$. If $\mu_2=1$ then by Theorem \ref{thm:circ-in-delt} we can find an open disc $\Omega_2$ inside $U_2$ such that
$$\#(\partial U_2\cap \partial \Omega_2)\ge3.$$
If $\mu_2>1$, then  we use Theorem \ref{thm:card-in-delt} and find an extreme BQD $\Omega_2$ of order $\mu_2$
inside $U_2$ such that
$$\#(\partial U_2\cap \partial \Omega_2)\ge4.$$
In either cases we get disjoint quadrature domains $\Omega_1,\Omega_2$ and $\Omega_\infty$ such that the interior of $(\Omega_1\sqcup\Omega_2\sqcup\Omega_\infty)^c$ has at least $\mu_1+\mu_2+1$ components, and at least one of these components
has deltoid-like boundary.

After  we repeat the same process $n-2$ times more, we end up with $n$ disjoint BQDs $\Omega_j$, $j=1,\cdots,n$, such that each $\Omega_j$ has a single node of multiplicity $\mu_j$ and the number of components in the interior of
$$\bigg(\Omega_\infty\cup \bigsqcup_{j=1}^n \Omega_j\bigg)^c$$
is at least 
$$\mu_1+\sum_{j=2}^n (\mu_j+1)=d+n-1.$$
A slight Hele-Shaw perturbation will give us an unbounded quadrature domain $\Omega$ with all the required properties.

Let us now turn to the second part of Theorem \ref{thm:UQD}.  We denote the multiplicity of the node at $\infty$ by $\mu_n$.  The multiplcities of the other nodes are $\mu_1,\cdots,\mu_{n-1}$.

First we consider $\mu_n\ge 2$.  We choose an extreme UQD $\Omega_n$ of order $\mu_n$. By Proposition \ref{prop} there exist $\mu_n-1$ number of components in $(\clos\Omega_n)^c$ with deltoid-like boundary.   If $n=1$ then we are done.  A slight Hele-Shaw flow will give $\Omega$ with connectivity $\mu_n-1=d-1$.
If $n>1$ then we proceed as in the previous case with only finite nodes.  We end up with $n-1$ disjoint BQDs $\Omega_1,\cdots,\Omega_{n-1}$ with multiplcities $\mu_1,\cdots,\mu_{n-1}$ such that the number of components in the interior of 
$$\bigg( \bigsqcup_{j=1}^{n} \Omega_j\bigg)^c$$
is at least 
\begin{equation*}
  \mu_n-1 + \sum_{j=1}^{n-1} (\mu_1+1)=d+n-2.
\end{equation*}
A slight Hele-Shaw perturbation will give us an unbounded quadrature domain $\Omega$ with all the required properties.

Next we consider $\mu_n=1$.  We choose an elliptical UQD $\Omega_n$ with a large eccentricity.   The reason for the latter condition can be explained by Figure \ref{fig:initialUQD}.  In the second and the third pictures, we show the inscription of two disks and a cardioid in an ellipse.   Such configurations are possible if the ellipse is ``thin'' enough.   Moreover, the third configuration is possible for any cardioid-like curve instead of just the cardioid.

If all the finite nodes are simple ($n=d$), we use the second configuration of Figure \ref{fig:initialUQD}; we choose the two quadrature domains $\Omega_1$ and $\Omega_2$ by disks and inscribe them in $(\clos\Omega_n)^c$.  There are four components in
 $$U_1=\inte[(\Omega_n\sqcup\Omega_1\sqcup\Omega_2)^c].$$   If $d=3$ then a slight Hele-Shaw perturbation will give $\Omega$ with connectivity $4=d+1$, as desired. 
If $d>3$ then, by Theorem \ref{thm:circ-in-delt}, we choose a disk $\Omega_3$ to inscribe in one of the deltoid-like components of $U_1$ (there are two).    Iterating this process we get $\Omega_j$, $j=3,\cdots,n-1$ such that the number of components in the interior of $(\bigsqcup_{j=1}^{n}\Omega_j)^c$ is  
\begin{equation*}
  4+\sum_{j=3}^{n-1} 2 = 2d-2. 
\end{equation*}
If there is a node with multiplcity $\ge 2$ then we assume $\mu_1\geq 2$.  We choose $\Omega_1$ by an extreme BQD of order $\mu_1$ and inscribe it in $(\clos\Omega_n)^c$ as in the third configuration of Figure \ref{fig:initialUQD}.
If $d=3$, the resulting quadrature domain after a slight Hele-Shaw flow has connectiviy $\mu_1+1$. 
If $d\ge 4$ we proceed, as in the case with only finite nodes, to inscribe further extreme BQDs.  The connectivity of the final quadrature domain is
\begin{equation*}
  \mu_1+1+ \sum_{j=2}^{n-1}(\mu_j+1)= d+n-2.
\end{equation*}
We obtain a desired quadrature domain after a small Hele-Shaw flow.

\subsection{Bounded quadrature domains}

We construct bounded quadrature domain $\Omega$ to prove Theorem \ref{thm:BQD}. 
We show the construction {\em up to a small Hele-Shaw perturbation}.

\begin{figure}[ht]\begin{center}
\includegraphics[width=0.6\textwidth]{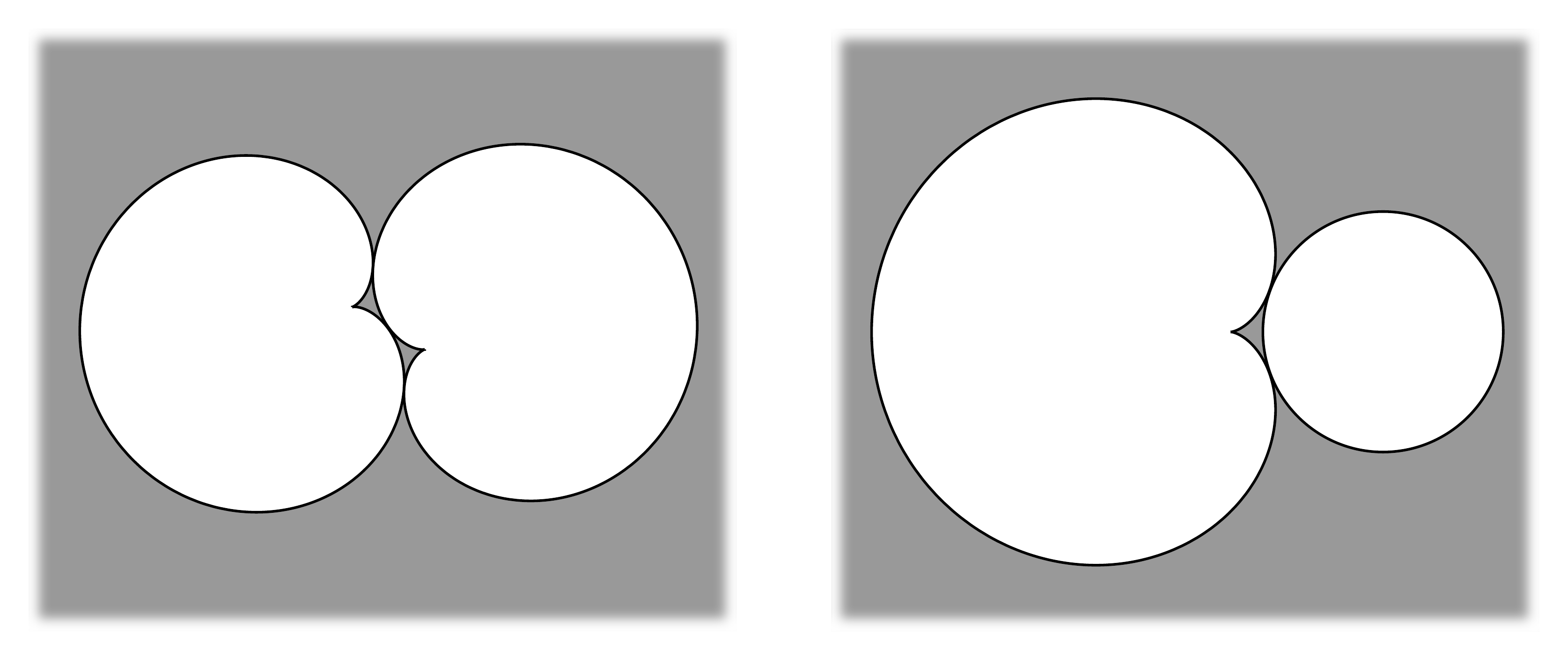}
\caption{\label{fig:initial-config}Initial configurations for BQD}
\end{center}\end{figure}

If all the nodes have multiplicities $\leq 2$ then we have three possibilities.  There are only simple nodes, only double nodes, or mixture of the two.  The first case was considered in \cite{Gus05} (also see Lemma 5.1 in \cite{1stpaper}). 
The initial configurations for the other two cases are shown in Figure \ref{fig:initial-config}.  

For the case with only double nodes,
we consider the first configuration of Figure \ref{fig:initial-config}. There are two cardioids, $\Omega_1$ and $\Omega_2$, such that the connectivity of the interior of $(\Omega_1\sqcup\Omega_2)^c$ is $3$.  (We are done if $n=2$.)
Successive inscription process, that is described in the previous subsection, gives the number of components in the interior of $(\sqcup_{j=1}^n\Omega_j)^c$ by
\begin{equation*}
  3+\sum_{j=2}^n (\mu_j+1) = d+n-3<2d-4.
\end{equation*}

For the case with mixed (simple and double) nodes,
we consider the second configuration of Figure \ref{fig:initial-config}. We have a cardioid and a disk, $\Omega_1$ and $\Omega_2$, such that the connectivity of the interior of $(\Omega_1\sqcup\Omega_2)^c$ is $2$.  (We are done if $n=2$.)
Successive inscription process, that is described in the previous subsection, gives the number of components in the interior of $(\sqcup_{j=1}^n\Omega_j)^c$ by
\begin{equation*}
  2+\sum_{j=2}^n (\mu_j+1) = d+n-3\le 2d-4.
\end{equation*}

If there exists a node of multiplicity $\mu_1\geq 3$, then we start with an extreme BQD $\Omega_1$ of order $\mu_1$.  This contains $\mu_1-2$ deltoid-like curve.  The closure of this domain has the connectivity
 $$\mu_1-1.$$
If $n=1$, we are done.  If $n>1$, we use the same process that is described in the previous subsection to find $\Omega_2,\cdots,\Omega_n$ such that the number of components in the interior of $(\Omega_1\sqcup\cdots\sqcup\Omega_n)^c$ is given by
\begin{equation*}
  \mu_1-1+\sum_{j=2}^n (\mu_j+1) = d+n-2.
\end{equation*}
A small Hele-Shaw perturbation a quadrature domain with the desired property.

\section{Equation \texorpdfstring{$r(z)=\overline z$}{}}\label{sec:rz}

\bs\subsection{Lefschetz fixed point theorem} 

Let us introduce the following notation: if $r$ is a rational function, then
\begin{itemize}
\item[--] $\widehat F_r$ (resp. $F_r$) is the number of fixed points of $\overline r$ in $\widehat\CC$ (resp. in $\CC$).
\item[--] $\widehat A_r$ (resp. $A_r$) is the number of {\em attracting} fixed points of $\overline r$ in $\widehat\CC$ (resp. in $\CC$). \end{itemize}
If $z\in\CC$ and $r(z)=\bar z$, then $z$ is {\it attracting} if $|r'(z)|<1$.  If $r(\infty)=\infty$, then $\infty$ is attracting if $|r'(z)|>1$ where 
$r'(\infty)$ is the limit of $r'(z)$ at $\infty$. A 
 fixed points of $\bar r$ is {\it hyperbolic} if it is either attracting or repelling.

Let $r$ be a rational function of degree $d$. We will assume that the fixed points of $\overline r$ are hyperbolic,
i.e. $$|r'(z)|\ne 1\quad\text{if}\quad \overline{r(z)}=z.$$

\begin{lemma} If $r$ is a rational function of degree $d$ such that all fixed points of $\bar r$ are hyperbolic, then
$$\widehat F_r=2\,\widehat A_r+d-1.$$
\end{lemma}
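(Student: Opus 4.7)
The plan is to apply the Lefschetz fixed point theorem to $\bar r$ viewed as a smooth self-map of the 2-sphere $\widehat\CC$. Since $r\colon\widehat\CC\to\widehat\CC$ is holomorphic of degree $d$, its topological degree is $+d$; complex conjugation reverses orientation, so $\bar r=\overline{(\cdot)}\circ r$ has topological degree $-d$. Because $H_0(\widehat\CC;\QQ)=H_2(\widehat\CC;\QQ)=\QQ$ and $H_1=0$, I read off
$$L(\bar r)\;=\;\mathrm{tr}\bigl(\bar r_*\bigm|H_0\bigr)+\mathrm{tr}\bigl(\bar r_*\bigm|H_2\bigr)\;=\;1+(-d)\;=\;1-d.$$
The hyperbolicity hypothesis $|r'(z)|\ne 1$ at every solution of $\bar r(z)=z$ ensures that $1$ is not an eigenvalue of the real differential $D\bar r$ at any fixed point (see the next step), so each fixed point is nondegenerate and the Lefschetz sum $L(\bar r)=\sum_{p}\mathrm{ind}_p(\bar r)$ holds.

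Next I compute the local index $\mathrm{ind}_p(\bar r)=\mathrm{sgn}\,\det(I-D\bar r_p)$. At a finite fixed point $z_0$, writing $r'(z_0)=a+bi$, a short calculation in real coordinates gives
$$D\bar r(z_0)=\begin{pmatrix}a&-b\\-b&-a\end{pmatrix},\qquad \det\bigl(I-D\bar r(z_0)\bigr)=1-|r'(z_0)|^2,$$
so the index is $+1$ when $|r'(z_0)|<1$ (attracting) and $-1$ when $|r'(z_0)|>1$ (repelling). For the point $\infty$, I work in the chart $w=1/z$. If $r$ has a simple pole at $\infty$ with $r'(\infty)=c$, then a direct expansion shows that in $w$-coordinates $\bar r$ becomes $w\mapsto \bar w/\bar c+O(\bar w^{2})$, the same matrix calculation gives $\det(I-D\bar r)=1-|c|^{-2}$, and the index is $+1$ iff $|c|>1$, matching the problem's definition of attracting. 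If the pole at $\infty$ has order $k\ge 2$, then in $w$-coordinates $\bar r$ starts with $\bar w^{k}/\bar c$; the differential at $w=0$ vanishes, $\det(I-D\bar r)=1$, so the index is automatically $+1$, which agrees with the convention that $|r'(\infty)|=\infty>1$ and $\infty$ is attracting.

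Combining the two steps, at every hyperbolic fixed point the local index equals $+1$ if attracting and $-1$ otherwise, so
$$\widehat A_r-\bigl(\widehat F_r-\widehat A_r\bigr)\;=\;L(\bar r)\;=\;1-d,$$
which rearranges to $\widehat F_r=2\widehat A_r+d-1$. The only part requiring care is the index computation at $\infty$: the case of a higher-order pole superficially falls outside the formula $\text{index}=\mathrm{sgn}(1-|r'(\cdot)|^2)$, but the chart change $w=1/z$ handles it cleanly and, reassuringly, yields exactly the $+1$ demanded by the attracting convention stated in the paper.
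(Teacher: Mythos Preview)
Your proof is correct and follows essentially the same approach as the paper: apply the Lefschetz fixed point formula to $\bar r$ on $\widehat\CC$ to get $\sum\mathrm{ind}_a(\bar r)=1-d$, identify the index as $+1$ at attracting and $-1$ at repelling fixed points, and rearrange. You supply considerably more detail than the paper does---the explicit Jacobian computation, and the chart-change analysis at $\infty$ including the higher-order pole case---whereas the paper simply asserts the index values and the Lefschetz number.
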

\begin{proof} 
By Lefschetz formula we have
$$\sum_{a\in {\rm Fix}(\overline r)} {\rm ind}_a(\bar r)=1-0-d,$$
where ${\rm ind}_a({\bar r})$ is the winding number of $\bar r(z)-z$ along the boundary of  a small disc centered at $a$.  The right hand side is the sum of traces of the induced map on the $k$-dimensional homologies.

It is clear that  the index is +1 if $a$ is attracting and $-1$ if repelling. Since the number of repelling fixed points are given by $\widehat F_r-\widehat A_r$ the above equation gives 
$\widehat A_r-(\widehat F_r-\widehat A_r)=1-d$.
\end{proof}

{\bf Remark.} In \cite{Kha1},\cite{Kha2} this lemma was derived from the harmonic argument principle.  While the usual argument principle for holomorphic functions counts the number of zeros, the harmonic argument principle counts the two types of zeros (of the harmonic function $r(z)-\overline z$ in our case) weighted by opposite signs; one corresponds to the attracting fixed points of $\overline r$ and the other to the repelling fixed points.   The relation to the Lefschetz fixed point theorem was mentioned in \cite{Ge03}.

\subsection{Fixed points and unbounded quadrature domains}\label{sec:fixed points-proof}

\begin{thm}\label{thm:62} Let $r$ be a rational function, and $c=A_r$. Then there exists an UQD with quadrature function $r$ such that its connectivity is $\ge c$ .\end{thm}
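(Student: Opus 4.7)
The plan is to construct $\Omega$ so that each of its $c$ bounded complement components encloses exactly one attracting fixed point $a_j$ of $\bar r$, which forces $\conn \Omega \ge c$. By Lemma~\ref{Schwarz-QD}, if $\Omega$ is a UQD with $r_\Omega=r$, its Schwarz function is
\[
S(z) = r(z) + \frac{1}{\pi}\int_{\Omega^c}\frac{\dd A(w)}{z-w},
\]
and the defining boundary condition is $S(z)=\bar z$ on $\partial\Omega$. Looking for $\Omega^c=\bigsqcup_{j=1}^c \overline{U_j}$ with each $U_j$ a small topological disc around $a_j$, the problem becomes a coupled self-consistency equation for the shapes $(U_1,\dots,U_c)$ — and once such $U_j$'s are produced, the formula above automatically identifies $S$ as the Schwarz function of the resulting $\Omega$ and hence forces $r_\Omega=r$.

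Next I would carry out the local analysis near each $a_j$. In the coordinate $w=z-a_j$, using $r(a_j)=\bar a_j$, the condition $\bar z = S(z)$ on $\partial U_j$ linearizes to
\[
\bar w \;=\; r'(a_j)\,w \;+\; \frac{c_j}{w} \;+\; (\text{lower order}),
\]
where $c_j\asymp |U_j|$ captures the leading moment of $U_j$ (the self-Cauchy-transform at $\partial U_j$, which for a round disc of radius $\rho$ centered at $a_j$ equals exactly $\rho^{2}/w$). Since $|r'(a_j)|<1$, this equation admits a unique closed-curve solution whose size is of order $\sqrt{|c_j|}$ — a slightly distorted circle around $a_j$. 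The Cauchy transforms of the other $U_k$, $k\ne j$, and the higher-order Taylor coefficients of $r$ at $a_j$ enter as small perturbations when all $U_j$'s are small.

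From here I would run a contraction-mapping (or implicit function theorem) argument in a suitable Banach space of boundary parametrizations for the tuple $(\partial U_1,\dots,\partial U_c)$, with $|c_j|$'s of common order $\rho^2$ as the small parameter. The attracting property $|r'(a_j)|<1$ furnishes the invertibility and contraction of the linearized operator at each $a_j$; the coupling between distinct $U_k$'s through the global Cauchy transform is controlled because $|U_k|/\operatorname{dist}(a_j,a_k)^{2}$ tends to zero as the $U_k$ shrink, so different $a_j$'s decouple in the limit. The fixed point of this iteration is the desired $\Omega$, automatically a UQD with $r_\Omega=r$, and with $c$ bounded complement components, so $\conn\Omega = c$.

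The main obstacle I anticipate is making this perturbative construction rigorous and uniform across all $c$ components simultaneously: one needs sharp estimates on the Cauchy transform of small sets to justify the decoupling, together with a choice of function space for the boundary parametrization in which the linearized operator $w\mapsto \bar w - r'(a_j)w - c_j/w$ is invertible uniformly in $j$. Both difficulties are ultimately controlled by the strict inequality $|r'(a_j)|<1$, which is the defining feature of an attracting fixed point.
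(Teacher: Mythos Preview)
Your overall strategy --- place one small component of $\Omega^c$ around each attracting fixed point --- is the right idea, and it is also what the paper does. But the paper's execution is quite different, and your linearization step has a genuine gap.

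The paper avoids any direct fixed-point iteration on boundary curves by passing to a potential. Near each attracting fixed point $z_j$ one sets
\[
Q(z)=|z|^2-|z_j|^2-2\,\Re\!\int_{z_j}^{z} r(\zeta)\,\dd\zeta,
\]
so that $\partial Q(z_j)=\bar z_j-r(z_j)=0$ and the Hessian determinant equals $1-|r'(z_j)|^2>0$. Thus $Q$ has a strict local minimum at each $z_j$, and the standard Hele-Shaw/obstacle-problem machinery (Section~5.1 of \cite{1stpaper}) immediately produces a local droplet containing all the $z_j$'s. The unbounded complement of that droplet is the desired UQD with quadrature function $r$ and connectivity $\ge c$. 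All the free-boundary analysis is hidden in the cited droplet theory.

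The gap in your proposal is the linearized boundary equation $\bar w=r'(a_j)\,w+c_j/w$. At a boundary point of a round disc of radius $\rho$ the self-Cauchy-transform equals $\bar w$ (your $\rho^2/w$ coincides with $\bar w$ there), so the equation collapses to $r'(a_j)\,w=0$; and for general constants the relation $\bar w=\alpha w+c/w$ forces $|w|^2-c=\alpha w^2$, which has only finitely many solutions in $w$, not a closed curve. The correct leading-order shape of $U_j$ is not a near-disc but an \emph{ellipse} with eccentricity parameter $m_j=r'(a_j)$: for such an ellipse one has $C_{U_j}(w)=\bar w-m_j w$ on the closure, and this linear term is exactly what cancels $r'(a_j)\,w$ in the boundary equation. (This is also where $|r'(a_j)|<1$ really enters: it is precisely the condition $|m_j|<1$ for a genuine ellipse --- equivalently, positivity of the Hessian of $Q$.) The overall size of each ellipse is then a free parameter, with the shape determined only at the next order. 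Your contraction scheme could in principle be repaired by linearizing around these ellipses, but at that point you would essentially be reproving the droplet-existence result that the paper simply cites.
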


\begin{proof} Let $z_1,\dots, z_c\in\CC$ be the attracting fixed points of $\bar r$. Denote
$$U=\bigcup_{j=1}^c B(z_j,\delta),$$
where $\delta$ is small enough so the discs are disjoint and do not contain any pole of $r$. We define the algebraic Hele-Shaw potential
$Q:U\to\RR$ as follows:
$$Q(z)=|z|^2-|z_j|^2-2\Re\int_{z_j}^z r(\zeta)~d\zeta,\qquad z\in B(z_j,\delta).$$
The potential $Q$ has a strict local minimum at each point $z_j$ because
$$\partial Q(z_j)=\bar z_j-r(z_j)=0,$$
and
$$\frac14\left|\begin{matrix}Q_{xx}&Q_{xy}\\Q_{xy}&Q_{yy}\end{matrix}\right|_{z=z_j}=(\partial\bar\partial Q)^2-|\partial ^2Q|^2=1-|r'(z_j)|^2>0.$$
Since $Q(z_j)=0$ for all $j$'s, we can assume that $Q\ge0$ in $U$ by decreasing $\delta$ if necessary. Using standard properties of the Hele-Shaw flow, see Section 5.1 of \cite{1stpaper}, we conclude that there is a local droplet of $Q$ which contains all fixed points $z_j$'s. The the unbounded complementary component of the droplet is then a quadrature domain of connectivity $\ge c$.
\end{proof}

The converse statement is implicit in \cite{1stpaper}.
\bs 

\begin{thm}\label{thm:63} Let $\Omega$ be a UQD of connectivity $c$.   Then there exists a rational function $r$ which has the same pole multiplicity structure as the quadrature function of $\Omega$ and such that the number of finite attracting fixed points of $\overline r$ is $\geq c$.\end{thm}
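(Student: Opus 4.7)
The strategy is to reverse the construction of Theorem \ref{thm:62}. That proof identifies the finite attracting fixed points of $\bar r$ with the strict local minima of the algebraic Hele-Shaw potential $Q(z)=|z|^2-2\Re\int r(\zeta)\,\dd\zeta$, and builds a UQD whose droplet $\Omega^c$ grows from small discs around those minima. For the converse, given a UQD $\Omega$ with $c$ complementary components $K_1,\dots,K_c$, I would try to exhibit a rational $r$ with the same pole multiplicities as $r_\Omega$ whose potential $Q$ has at least one strict local minimum inside each $K_j$.

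The natural tool is the Hele-Shaw deformation machinery of Section~5 of \cite{1stpaper}, run in reverse time (partial balayage). First I would construct a one-parameter family $\{\Omega_t\}_{t\ge0}$ of UQDs with $\Omega_0=\Omega$ and $\Omega_t\supsetneq\Omega_0$, such that the droplets $K_t:=\Omega_t^c$ shrink monotonically, component by component, with $K_{j,t}\subset K_j$, while the pole multiplicity data of the quadrature function $r_t$ of $\Omega_t$ is preserved along the flow (this is exactly the content of the preservation statements invoked after Theorems \ref{thm:UQD} and \ref{thm:BQD}). Then, for $t$ sufficiently large, each $K_{j,t}$ should be confined to a small neighborhood of a critical point of $Q_t(z)=|z|^2-2\Re\int r_t(\zeta)\,\dd\zeta$: by the variational characterization of Hele-Shaw droplets, $K_{j,t}$ is the region where $Q_t$ meets its obstacle, and shrinking it by reverse Hele-Shaw forces it to concentrate around the interior minimum it was grown from. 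The Hessian computation carried out in the proof of Theorem \ref{thm:62} (namely $\det H=4(1-|r_t'(z_*)|^2)>0$ at a minimum) identifies such a minimum as a finite attracting fixed point of $\bar r_t$. Taking $r=r_t$ yields $\widehat A_r\ge c$, as required.

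\textbf{Main obstacle.} The critical ingredient is verifying that reverse Hele-Shaw can be carried out long enough to localize every $K_{j,t}$ near a single attracting fixed point of $\bar r_t$, while (i) keeping the pole multiplicity structure rigid and (ii) preventing components from merging or disappearing prematurely. Item (i) is precisely the content of the Hele-Shaw preservation lemmas quoted from \cite{1stpaper}; item (ii) requires a finer monotonicity argument showing that each individual $K_{j,t}$ can be drained independently of the others. A fallback, if this direct Hele-Shaw argument is delicate, is to combine the quasiconformal surgery from \cite{1stpaper} (which replaces one component at a time by a small disc) with the Hessian identification of attracting fixed points, inductively replacing each $K_j$ by a disc centered at a designated attractor of the resulting quadrature function.
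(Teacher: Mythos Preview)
Your primary approach has a genuine gap. Under backward Hele-Shaw with source at infinity the quadrature function of the UQD is \emph{invariant}: $r_t=r_\Omega$ for every $t$ (this is exactly item (a) in Section~5 of \cite{1stpaper}). Thus the potential $Q_t$ never changes, and your argument reduces to the bare claim that $\bar r_\Omega$ already has $\ge c$ finite attracting fixed points, one in each component $K_j$. That claim is false. Take $\Omega$ to be the complement of two small disjoint discs of radius $R$ centered at $\pm1$; then $r_\Omega(z)=R^2/(z-1)+R^2/(z+1)$, and a direct computation shows that for small $R$ the only attracting fixed point of $\bar r_\Omega$ is $z=0$, which lies in $\Omega$, while the fixed points inside the two droplet components $K_1,K_2$ are repelling. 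So $c=2$ but $A_{r_\Omega}=1$. No amount of shrinking $K_{j,t}$ will make it concentrate near an attractor of $\bar r_\Omega$, because there is none nearby; the components simply vanish at positive time. The theorem really does require producing a \emph{new} rational function, not $r_\Omega$.

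Your fallback is the correct tool but you misidentify the mechanism. The surgery of Lemma~4.3 in \cite{1stpaper} does not ``replace each $K_j$ by a disc'' and then read off attractors from a Hessian. Instead one starts from the anti-holomorphic Schwarz reflection $\bar S$, which is only defined on $\clos\Omega$, and extends it across each $K_j$ to a branched covering $h:\widehat\CC\to\widehat\CC$ that, by design, maps $K_j$ into itself with an attracting fixed point. (This is where the $c$ attractors are manufactured; they do not come from $Q$.) One then pulls back the standard conformal structure by the iterates of $h$ and invokes the Measurable Riemann Mapping Theorem: the straightening $\Phi\circ h\circ\Phi^{-1}$ is $\bar r$ for a genuine rational $r$. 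The QC conjugacy $\Phi$ sends the poles of $\bar S$ (which sit in $\Omega$, where $h=\bar S$ is anti-holomorphic) to poles of $\bar r$ with the same multiplicities, and sends the $c$ built-in attractors of $h$ to $c$ finite attracting fixed points of $\bar r$. That is the paper's proof; the Hele-Shaw step is used only afterwards, to remove the genericity hypotheses (no cusps, double points, or critical values on $\partial\Omega$).
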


\begin{proof} 
Let $S$ be the Schwarz function of $\Omega$. We first assume that there are no cusps or double points on the boundary, and that
$\overline S$ has no critical values on $\partial\Omega$.  By Lemma 4.3 in \cite{1stpaper}, there exists a rational function $r:\hat\CC\to\hat\CC$ such that $\bar r$ is quasi-conformally conjugate to a certain extension $h$ of 
$$\overline S:~ \Omega\setminus\overline S^{-1}(\Omega^c) \to \widehat\CC$$
to the whole  Riemann sphere. 
We can choose the conjugacy so that $\infty\mapsto\infty$.
The extended function $h$ has an attracting fixed point in each component of $\Omega^c$. 
Since $\overline S$ and $h$ have the same poles, and these poles are in $\Omega\setminus\overline S^{-1}(\Omega^c)$, the functions $\overline r$ and $h$ have the same pole multiplicity structures. Finally, it is explained in Section 5 in \cite{1stpaper} how to remove the assumptions that we made at the beginning of the proof.
\end{proof}

Combining Theorem \ref{thm:62} with connectivity bounds \eqref{eq-thma1}-\eqref{eq-thma2} we obtain the following generalization of
the estimates in \cite{Kha1} and \cite{Kha2}.
\bigskip

\begin{customthm}{\!\!} Let $r$ be a rational function of degree $d\ge2 $ with  $n$  distinct poles.  
\begin{equation}\label{eq:Fr}
  \widehat F_r\le
\min\{3d+2n-3, 5d-5\}.
\end{equation}
\end{customthm}

\begin{proof} The same argument as in \cite{Kha1}, \cite{Kha2} (see e.g. Lemma 1 in \cite{Kha2}) shows that we can assume that all fixed points of $\bar r$ are hyperbolic, so we have $\widehat F_r = 2\widehat A_r +d-1$.

We need to consider 3 cases: 
\begin{itemize}
\item[(a)] $\overline r(\infty)\neq\infty$.
 $$A_r\le\min\{d+n-1,~ 2d-2\}\quad\Longrightarrow \quad \widehat F_r\le\min\{3d+2n-3,~ 5d-5\}.$$
\item[(b)]
$\infty$ is a repelling fixed point.
 $$A_r\le d+n-2\quad\Longrightarrow \quad \widehat F_r\le 3d+2n-5.$$
\item[(c)] $\infty$ is an attracting fixed point. Here we use
$$A_r\le \min\{d+n-2,2d-3\}\quad\Longrightarrow \quad \widehat F_r\le\min\{3d+2n-3,~ 5d-5\}.$$
The inequality on $A_r$ follows from \eqref{eq-thma2} if $n<d$. If $n=d$ we can not have $A_r=2d-2$ because then we would have
$\widehat A_r=2d-1$ and $\widehat N=5d-3$, which contradicts \cite{Kha2}.
\end{itemize}
\end{proof}

Let us now show that the estimate \eqref{eq:Fr} is sharp.

Proof of Theorem \ref{thm:C}. Let 
$$d=\mu_1+ ...+\mu_n$$
be a given partition.  For the first part of the theorem, we need to find a rational function $r$ satisfying
\begin{itemize}
\item[(i)] $r(\infty)\ne\infty$,

\item[(ii)] $A_r=\min\{d+n-1, 2d-2\}$,

\item[(iii)] all fixed points are hyperbolic. 
\end{itemize}
The existence of such a function follows from the construction of UQDs in the proof of Theorem \ref{thm:UQD} and from the
 argument in the proof of Theorem \ref{thm:63}. The important feature of the construction is that all the critical points of $r$ are attracted to attracting fixed points of $\bar r$, which excludes the possibility of non-hypebolic fixed points -- such points would be {\it parabolic} for the second iterate of $\bar r$.

The same reasoning proves the second part of Theorem \ref{thm:C} where we use our construction of UQDs such that
$\infty$ is a node of multiplicity $\mu_n>1$ and 
$$\conn \Omega=d+n-2=\min \{d+n-2,2d-3\}.$$

{\bf Remark.} One can show that for all $d\ge2$ there are UQDs with $n=d$ such that $\infty$ is an attracting fixedpoint of the quadrature function. (The proof is computer assisted, and we don't present it here.) It follows that the second part of Theorem \ref{thm:C} is also true for $\mu_n=1$

\subsection{Examples in gravitational lensing}\label{sec:grav}

In this subsection we will consider the equation $r(z)=\bar z$ with 
\begin{equation}\label{eq:r-gravi}
  r(z)= -\gamma z + \sum_{j=1}^N \frac{\varepsilon_j}{z-z_j}
\end{equation}
where
\begin{equation}\label{eq:phys-constraint1}   \varepsilon_j>0,~~~ \gamma\ge0, ~~~  s\in\CC,~~~ z_j\in\CC.
\end{equation}
Let $F_r$ be the number of finite solutions (``the number of images'', see Section \ref{sec:GL-intro}).   By \cite{Kha2} we have
$$F_r\le 5N-1,$$
and by Theorem \ref{thm:C} this estimate is sharp if we don't require \eqref{eq:phys-constraint1}. Our goal is to construct examples with $5N-1$ images for functions  $r(z)$ satisfying \eqref{eq:phys-constraint1}.

\bigskip

\begin{prop}\label{thm:smallgamma} If $N>0$ is an even number and $\gamma\in (\gamma_0,1)$ where
\begin{equation*}
  \gamma_0 = \frac{2-\sqrt2}{2+\sqrt 2}\approx 0.171573,
\end{equation*}
then there exist $\{\varepsilon_j|j=1,\cdots,N\}\subset \RR^+$ and $\{z_j|j=1,\cdots,N\}\subset \CC$ such that $r(z)=\overline z$ with $r(z)$ given by \eqref{eq:r-gravi} has $5N-1$ roots.
\end{prop}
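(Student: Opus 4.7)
The plan is to produce a rational function $r$ of the lensing form \eqref{eq:r-gravi} whose $\bar r$ has $\widehat A_r = 2N$ attracting fixed points; since $\gamma<1$ makes $\infty$ a repelling fixed point of $\bar r$, the Lefschetz formula $\widehat F_r = 2\widehat A_r + d - 1$ with $d = N+1$ then gives $F_r = 5N - 1$. The critical value $\gamma_0 = 3 - 2\sqrt{2} = (\sqrt 2 - 1)^2$ is precisely $(a-b)/(a+b)$ at the ellipse aspect ratio $a/b = \sqrt 2$, which I will show is the threshold for inscribing two equal, mutually tangent disks along the \emph{imaginary} axis inside a tall ellipse with semi-axes $a > b$. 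Inscribing perpendicular to the shear direction is essential: a pair of symmetric masses on the real axis combined with the shear $-\gamma z$ admits at most five finite solutions of $r(z) = \bar z$, whereas a vertical placement yields the full count.

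Given $\gamma \in (\gamma_0, 1)$, set $a/b = (1+\gamma)/(1-\gamma) > \sqrt 2$ and let $\Omega_\infty$ be the exterior of the tall ellipse $E = \{x^2/b^2 + y^2/a^2 \le 1\}$. Using the conformal map $f(w) = \tfrac{a+b}{2}w - \tfrac{a-b}{2w}$ from $\widehat{\CC}\setminus\overline{\DD}$ onto $\Omega_\infty$ one computes $r_{\Omega_\infty}(z) = -\gamma z$, supplying the shear term. A direct tangency computation shows that for $a/b \ge \sqrt 2$ there is a pair of equal open disks $D_1, D_2$ centered at $(0, \pm r_0)$ with $r_0 = b\sqrt{a^2-b^2}/a$, each inscribed in $E$ and tangent to $E$ at a symmetric pair of upper/lower points, and tangent to one another at the origin. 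The interior of $(\Omega_\infty \cup D_1 \cup D_2)^c$ has four components: two ``caps'' at the top and bottom of $E$, and two deltoid-like ``wedges'' on the left and right. For $N > 2$ I inscribe further disks $D_3, \ldots, D_N$ in the deltoid-like components using Theorem \ref{thm:circ-in-delt}, iterating as in Section \ref{sec:main-proof}, to end with $2N$ components in the interior of $\big(\Omega_\infty \sqcup D_1 \sqcup \cdots \sqcup D_N\big)^c$.

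A small algebraic Hele-Shaw perturbation from \cite{1stpaper} merges the disjoint union into a single UQD $\Omega$ with $\conn \Omega = 2N$, preserving the pole-multiplicity structure. Because quadrature functions add on disjoint unions, each $r_{D_j}(z) = r_j^2/(z - z_j)$ has positive residue, and Hele-Shaw mass injection only increases the finite residues while leaving the coefficient $-\gamma$ at infinity unchanged, the quadrature function of $\Omega$ has the lensing form
\[
r_\Omega(z) = -\gamma z + \sum_{j=1}^N \frac{\varepsilon_j}{z - z_j}, \qquad \varepsilon_j > 0.
\]

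The remaining step is a direct count of the fixed points of $\bar r_\Omega$. For the base case $N = 2$ with vertical symmetric masses, $r(z) = -\gamma z + \varepsilon\big(1/(z-id) + 1/(z+id)\big)$, separating $r(z) = \bar z$ into real and imaginary parts via the weights $w_j = \varepsilon/|z-z_j|^2$ yields three solutions on the imaginary axis (including the origin), two on the real axis under $2\varepsilon > d^2(1+\gamma)$, and four genuine off-axis solutions under $2\varepsilon\gamma < d^2(1+\gamma)^2$; the interval $\big(d^2(1+\gamma)/2,\; d^2(1+\gamma)^2/(2\gamma)\big)$ is nonempty for every $\gamma \in (0,1)$, so choosing the Hele-Shaw-equilibrium $\varepsilon, d$ in this range gives $F_r = 9 = 5\cdot 2 - 1$ finite fixed points, with the connectivity upper bound and Lefschetz forcing four of these to be attracting. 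For $N > 2$ the same analysis is iterated inside each deltoid-like cell produced by the inscribed-disk step; the evenness of $N$ matches the natural pairing of disks allowed by the $\gamma_0$ threshold. The main obstacle I foresee is ensuring that the fixed-point configurations in separate cells remain non-degenerate (all solutions hyperbolic and distinct) when the cells are glued together by the final Hele-Shaw smoothing, which I expect to handle by the openness of the all-hyperbolic condition in the space of lensing rational functions.
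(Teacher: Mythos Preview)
Your droplet construction --- the ellipse with shear $\gamma$, the threshold $\gamma_0=3-2\sqrt2$ at aspect ratio $\sqrt2$, two disks along the major axis producing four complementary pieces, then further disks in the deltoid-like cells to reach $2N$ pieces --- is exactly the paper's construction.  The gap is in what you do with this configuration.

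The bridge from ``UQD of connectivity $2N$'' to ``rational function of the lensing form with $2N$ attracting fixed points'' is \emph{not} through the quadrature function $r_\Omega$.  It is through the Schwarz function $S$ of $\Omega$ followed by quasi-conformal surgery (Theorem~\ref{thm:63} and Lemma~4.3 of \cite{1stpaper}): one extends $\bar S$ from $\Omega\setminus\bar S^{-1}(\Omega^c)$ to an anti-rational map on $\widehat\CC$, and the resulting $r$ is only quasi-conformally conjugate to that extension, not equal to $r_\Omega$.  There is no reason for $\bar r_\Omega$ itself to have $2N$ attracting fixed points.  Your direct count for $N=2$ may be fine (and would in fact work for every $\gamma\in(0,1)$, making the threshold $\gamma_0$ irrelevant there), but the assertion that ``the same analysis is iterated inside each deltoid-like cell'' for $N>2$ has no content: the global equation $r(z)=\bar z$ does not localize to cells, and you have given no mechanism linking the $2N$ droplet components to $2N$ attracting fixed points of $\bar r_\Omega$.

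A second gap concerns positivity.  Your argument that the residues of $r_\Omega$ are positive (disk residues are squares of radii, and finite-node Hele-Shaw injection only increases them) is correct for $r_\Omega$, but irrelevant once the surgery is performed: the residues of the surgered $r$ are $\varepsilon_j=\overline{\Phi'(\infty)}\,\mathrm{Res}_{z_j}S\,/\,\Phi'(z_j)$, where $\Phi$ is the conjugating quasi-conformal homeomorphism.  The paper secures $\Phi'(z_j)>0$ by placing \emph{all} poles on the coordinate axes (two on the major axis, the remaining $N-2$ symmetrically on the minor axis) so that the whole picture is invariant under both reflections; the surgery then produces a $\Phi$ preserving $\RR$ and $\ii\RR$ with positive derivative at the poles.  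This symmetry is why $N$ must be even and why the paper's inscription is ``along the minor axis symmetrically with respect to the major axis'' rather than arbitrary.  Your construction does not record this constraint, so even after inserting the missing surgery step, the positivity of the $\varepsilon_j$ would remain unproved.
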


\begin{proof}
An ellipse is the complement of the conformal image of the exterior unit disk under the map:
\begin{equation*}
  f(w)=a w- b/w,\qquad a>b>0.
\end{equation*}
Since the Schwarz function of the complement of the ellipse satisfies $S(f(w))=\overline{f(1/\overline w)}$, the leading behavior of $S(z)$ as $|z|\to\infty$ is given by
\begin{equation}
  S(z)=-\gamma z +{\mathcal O}(1),\quad \gamma= \frac{b}{a}.
\end{equation}
The ellipse has $2(a-b)$ and $2(a+b)$ as the minor (horizontal) and major (vertical) axis respectively.   The ellipse is a disk when $\gamma=0$, and gets more skinny as $\gamma\to 1$.

We choose $b=2-a$ so that the major axis has {\em a fixed length $4$}.  As $a$ decreases from $2$ to $1$ (i.e. $\gamma=2/a - 1$ increases from zero to one), there exists a critical $a$ such that the ellipse begins to contain the two unit disks along the major axis (as shown in the left configuration of Figure \ref{fig:goggle}).   This happens when
\begin{equation}
  a=\frac{2+\sqrt 2}{2},\quad b=\frac{2-\sqrt 2}{2},\quad \gamma=\gamma_0:=\frac{2-\sqrt 2}{2+\sqrt 2}.
\end{equation}

\begin{figure}\begin{center}
\includegraphics[width=0.35\textwidth,angle=90]{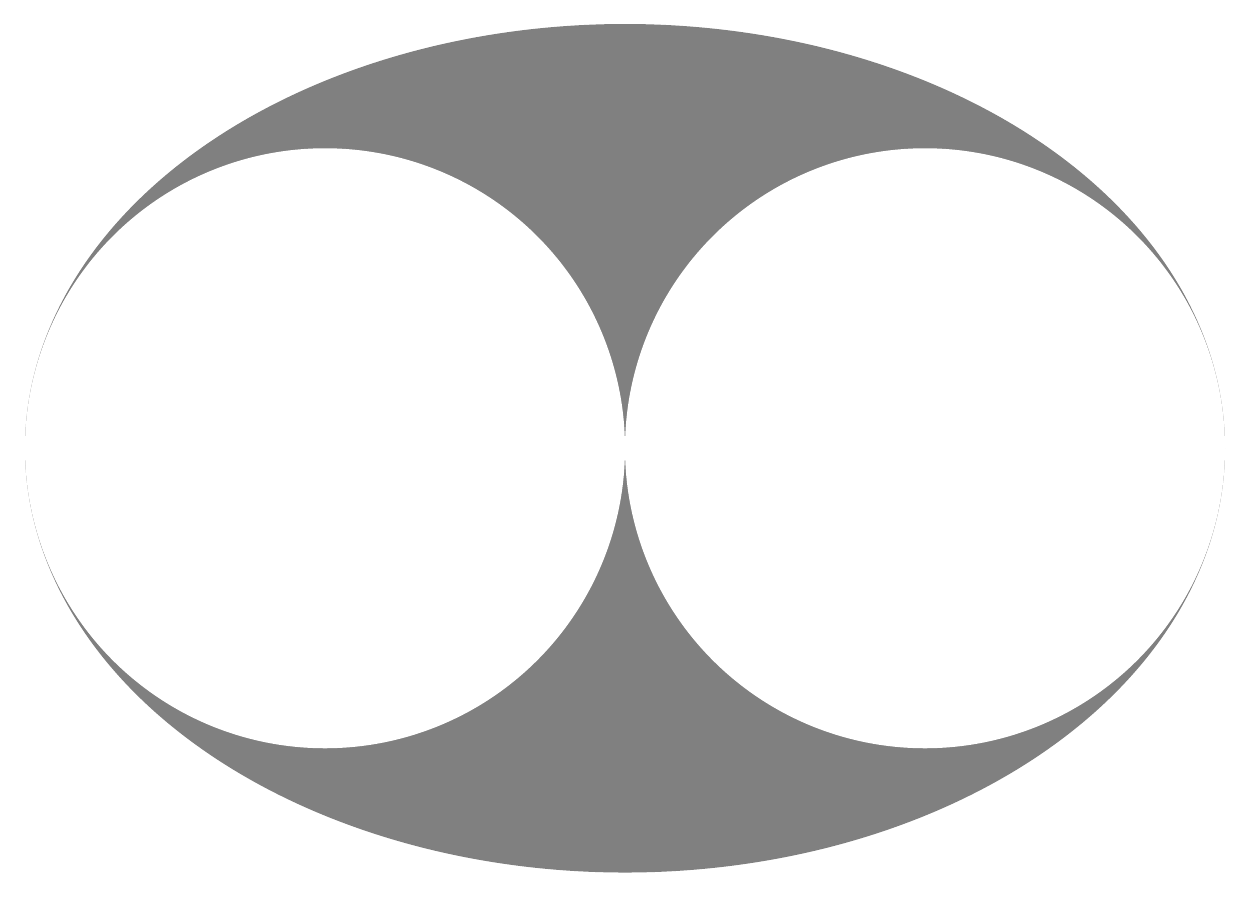}\qquad
\includegraphics[width=0.37\textwidth,angle=90]{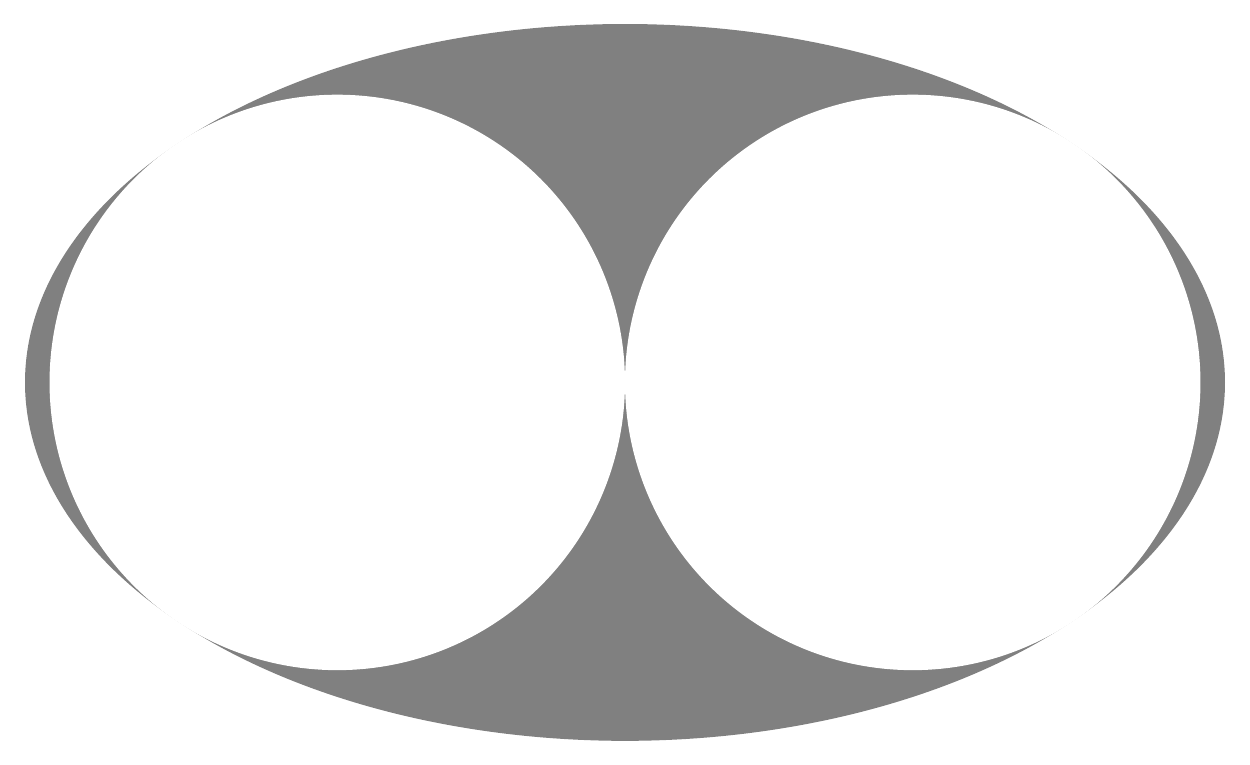}
\label{fig:goggle}
\caption{Left: The ellipse with $\gamma=\gamma_0$; Right: Packing of two disks inside an ellipse with $\gamma>\gamma_0$.}
\end{center}
\end{figure}

For any $\gamma\in (\gamma_0,1)$ one can inscribe two disks of identical radius inside the ellipse so that there are four components in the complement of (closure of) the two disks, see the right configuration in Figure \ref{fig:goggle}. 

Among the four components there are two components with deltoid-like boundary, and we can inscribe $N-2$ disks using Theorem \ref{thm:circ-in-delt} {\em along the minor axis symmetrically with respect to the major axis}.  
We then have $2N$ components with $N$ (which is even) disks packed inside the ellipse.  

The resulting Schwarz function $S$ has $N$ simple poles and all of them have positive residues (given by the square of the corresponding radius).  Two of them are on the vertical axis, and the others are all on the horizontal axis.

We then deform the droplet by a small Hele-Shaw flow followed by a quasi-conformal surgery (which is explained in details in \cite{1stpaper}, Section 4.3).  It results in the existence of a quasi-conformal homeomorphism $\Phi:\widehat\CC\to\widehat\CC$ satisfying $\Phi:\infty\mapsto\infty$ and $\Phi'(\infty)>0$ such that the anti-analytic continuation of 
$\Phi \circ \overline{S}\circ \Phi^{-1}$ defines an anti-analytic rational function, say $\overline r$, with $2N$ attracting fixed points.  Let $z_j$'s be the poles of $S$ (i.e. the centers of the disks), $\Phi$ is holomorphic at $z_j$ and symmetric, i.e. mapping $\RR$ to $\RR$ and $\ii\RR$ to $\ii\RR$ while preserving orientation.   This gives $\Phi'(z_j)>0$.   
Then $r$ has a simple pole at $\Phi(z_j)$, and the residue of $r$ at $\Phi(z_j)$ is given by 
\begin{equation*}
   \varepsilon_j:=\frac{\overline{\Phi'(\infty)}}{\Phi'(z_j)}\text{Res}_{z_j}S.
\end{equation*} 
Therefore we have $\varepsilon_j>0$. 
\end{proof}

The similar procedure gives the following result which is valid for all $N$.
\bigskip
\begin{prop}\label{cor:gravi} Given $N>0$, there exists a rational function $r$ of the form \eqref{eq:r-gravi} with $0<\gamma<1$, $\varepsilon_j>0$, such that $r(z)=\overline z$ has exactly $5N-1$ finite roots.
\end{prop}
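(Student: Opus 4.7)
The plan is to adapt the construction of Proposition~\ref{thm:smallgamma} to all positive integers $N$. The overall procedure is identical: inscribe $N$ disjoint disks inside an ellipse whose eccentricity corresponds to $\gamma \in (0,1)$, apply a small Hele--Shaw perturbation followed by the quasi-conformal surgery of \cite{1stpaper} to obtain a rational function $r$ of the form \eqref{eq:r-gravi}, and apply the Lefschetz relation ($\widehat F_r = 2\widehat A_r + d - 1$ with $d = N+1$) to read off $F_r = 5N - 1$, provided the interior of the complement of the droplet has exactly $2N$ connected components. For even $N$ this is precisely Proposition~\ref{thm:smallgamma}, so only the odd case is new.

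For $N = 1$, I would take $\gamma$ close enough to $1$ that the ellipse is sufficiently elongated and inscribe a single disk tangent to the ellipse at the two endpoints of its minor axis; the complement has two crescent components, the configuration is symmetric about both coordinate axes, and positivity of the single residue follows as in Proposition~\ref{thm:smallgamma}. For odd $N \geq 3$, I would replace the two-disk initial packing of Proposition~\ref{thm:smallgamma} by three mutually tangent disks along the major axis, with the middle disk centered at the origin and itself tangent to the ellipse at $(\pm w, 0)$ (where $w$ denotes the minor semi-axis, as in the proof of Proposition~\ref{thm:smallgamma}). A direct computation, using the tangent condition for a circle inscribed in an ellipse and analogous to the one yielding $\gamma_0$ in Proposition~\ref{thm:smallgamma}, shows that this three-disk packing exists for all $\gamma$ in an open subinterval of $(0,1)$, and an Euler-characteristic count then gives exactly $6 = 2\cdot 3$ components in the interior of the complement: two lunes above and below, and four deltoid-like regions in the four quadrants. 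This settles $N = 3$; for $N = 3 + 2k$ with $k \geq 1$ one then inscribes $k$ further symmetric pairs of disks in the deltoid-like regions by Theorem~\ref{thm:circ-in-delt}, each pair contributing four new components and preserving the major-axis reflection symmetry.

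The main obstacle is ensuring that the residues $\varepsilon_j$ remain real positive when some added disks lie off the coordinate axes: the identity $\varepsilon_j = \overline{\Phi'(\infty)}/\Phi'(z_j) \cdot \mathrm{Res}_{z_j} S$ used in Proposition~\ref{thm:smallgamma} yields $\varepsilon_j \in \mathbb{R}_{>0}$ only when $\Phi'(z_j) \in \mathbb{R}_{>0}$, and the symmetry argument there guarantees this only for $z_j$ on the coordinate axes preserved by $\Phi$. For the three initial disks on the major axis this is automatic, but for the symmetric pairs inscribed in the deltoid regions the derivative $\Phi'(z_j)$ is generally complex. I would handle this either by confining the added pairs to the $x$-axis (possible if the middle initial disk is relaxed to be strictly smaller than the minor semi-axis so that room remains on the minor axis, at the cost of adjusting the initial component count accordingly) or by a continuity argument: the $\varepsilon_j$ depend continuously on the configuration and are positive for nearby symmetric even-$N+1$ configurations of Proposition~\ref{thm:smallgamma}, so positivity persists under a small deformation that collapses one disk to zero radius.
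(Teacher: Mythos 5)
Your overall machinery is the right one (the even case via Proposition~\ref{thm:smallgamma}, the Hele--Shaw perturbation plus quasi-conformal surgery, and the Lefschetz count $\widehat F_r=2\widehat A_r+d-1$ with $d=N+1$ and $\infty$ repelling, so that $2N$ complementary components yield exactly $5N-1$ finite roots), and you have correctly put your finger on the crux: the identity $\varepsilon_j=\overline{\Phi'(\infty)}\,\mathrm{Res}_{z_j}S/\Phi'(z_j)$ produces a \emph{positive real} $\varepsilon_j$ only when $\Phi'(z_j)>0$, which the symmetry argument guarantees only for centers lying on an axis preserved by $\Phi$. The gap is that for odd $N\ge 5$ your construction places the added pairs of disks in deltoid-like regions sitting in the four quadrants, i.e.\ off both coordinate axes; there $\Phi'(z_j)$ is genuinely complex (reflection symmetry only forces $\varepsilon_{j'}=\overline{\varepsilon_j}$ for a mirror pair, not reality), so the resulting $r$ need not be of the form \eqref{eq:r-gravi}. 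Neither of your proposed repairs closes this. The continuity argument (b) is not valid: collapsing one disk of an $(N+1)$-disk configuration to zero radius changes the degree and the number of poles, it is not a small perturbation within the class of $N$-disk droplets with $2N$ complementary components, and there is no continuity statement relating the residues of the two surgered rational maps. Repair (a) is the right instinct but is exactly the part of the proof that still has to be done: you would need to exhibit an on-axis configuration with the correct component count, and as written you conflate the major and minor axes and leave the count ``to be adjusted accordingly.''

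The paper's own proof avoids the parity split and the off-axis problem entirely: take a sufficiently skinny ellipse and inscribe all $N$ disks in a single tangent chain with centers on the major axis, each disk tangent to its neighbours and to the ellipse. A one-step induction (adding a disk to an end crescent replaces one component by three) shows the interior of the complement has exactly $2N$ components for \emph{every} $N$, and since every center lies on the major axis the same symmetry argument as in Proposition~\ref{thm:smallgamma} gives $\Phi'(z_j)>0$, hence $\varepsilon_j>0$, immediately. I would recommend you replace your three-disk-plus-pairs construction by this chain configuration; your $N=1$ and $N=3$ cases are then special instances of it and need no separate treatment.
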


\begin{proof} In this case we choose a very skinny ellipse and inscribe $N$ disjoint disks so that i) each disk has three tangent points (with other discs or the ellipse) ii) and the centers of the discs are on the major axis of the ellipse. 
\end{proof}

{\bf Remark.} Sharp examples for case $\gamma=0$ can be obtained from ${\mathbb Z}_{N-1}$-symmetric packing of $N-1$ discs inside a round annulus, \cite{Rh03}.

\subsection{Rational analogues of Crofoot-Sarason polynomials}\label{sec:Crofoot-Sarason}

Let us recall Geyer's theorem \cite{Ge03}:  {\em For every $d > 1$, there exists a polynomial $p$ of degree $d$ and distinct points $c_1,...,c_{d-1}$ in $\CC$ such that}
\begin{equation*}
  \overline{p(c_j)}=c_j,\qquad p'(c_j)=0.
\end{equation*}

{\bf Example} (Crafoot-Sarason, see \cite{Kha1}):
\begin{equation*}
  p(z)=\frac{3z}{2}-\frac{z^3}{2}.
\end{equation*}
This satisfies $p'(\pm 1)=0$ and $p(\pm1)=\pm1$.

Here is a counterpart of Geyer's theorem for rational functions.  
\vspace{0.2cm}
\begin{thm}\label{thm:Rhie} For every $d>1$, there exists a rational function $r$ of degree $d$ and distinct points $c_1,\cdots,c_{2d-2}$ in $\CC$ such that
\begin{equation*}
	r'(c_j)=0,\qquad r(c_j)=\overline{c_j}.
\end{equation*}
\end{thm}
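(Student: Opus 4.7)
The plan is to strengthen the rational function produced by Theorem \ref{thm:C} so that, in addition to saturating the bound $\widehat F_r = 5d - 5$, its $2d - 2$ critical points coincide with super-attracting fixed points of $\bar r$.

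First, I would invoke Theorem \ref{thm:C} with the partition $d = 1 + \cdots + 1$ (that is, $n = d$, all simple nodes): this yields a rational function $r$ of degree $d$ with $d$ simple finite poles and $r(\infty) \neq \infty$, such that $\widehat F_r = 5d - 5$. Assuming all fixed points of $\bar r$ are hyperbolic (a generic condition, attainable by a small perturbation of the construction, as in the proof of Theorem \ref{thm:C}), the Lefschetz identity $\widehat F_r = 2\widehat A_r + d - 1$ forces $\widehat A_r = 2d - 2$; so there are exactly $2d - 2$ attracting fixed points $c_1, \ldots, c_{2d-2} \in \CC$ of $\bar r$. Denote their basins by $U_1, \ldots, U_{2d-2}$.

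Second, I would count critical points. The map $r$ has $2d - 2$ critical points with multiplicity. Passing to the holomorphic second iterate $g(z) = s(r(z))$ with $s(z) = \overline{r(\bar z)}$, each attracting basin of $\bar r$ becomes an attracting basin of $g$, so by Fatou's theorem it contains a critical point of $g$. The critical points of $g$ are the $2d - 2$ critical points of $r$ itself together with the $r$-preimages of critical values of $s$; the latter redistribute across the basins in a pattern dictated by $r$-invariance. A combinatorial count pins exactly one simple critical point $\omega_j \in U_j$ of $r$ per basin.

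The crux is then to upgrade this to $\omega_j = c_j$ for every $j$, making each $c_j$ super-attracting. I would proceed by a deformation/rigidity argument in the spirit of Geyer. Parametrizing rational functions of degree $d$ with the given pole structure as a real-analytic manifold, consider on the open subset where $\bar r$ has $2d - 2$ distinct attracting fixed points the real-analytic map $r \mapsto (\omega_j - c_j)_{j}$. A dimension count shows that the zero set has the expected positive dimension, and non-emptiness can be established either (a) via the anti-holomorphic Thurston theorem for critically finite branched coverings applied to the topological model coming from the disk-packing UQD of the proof of Theorem \ref{thm:UQD}, where the absence of Thurston obstructions is ensured by the simple connectedness of each complementary component; or (b) via a continuation along a Hele-Shaw family of UQDs, tracking the critical points until they collide with the attracting fixed points.

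The principal obstacle is this final step. The UQD and quasi-conformal surgery machinery naturally produce the $2d - 2$ attracting fixed points but a priori leave the critical points free to sit anywhere within their basins; forcing them onto the fixed points requires either an obstruction-theoretic input (the rational analogue of the Berstein--Levy theorem that Geyer exploited in the polynomial case) or an explicit deformation that respects the combinatorial basin structure coming from the UQD. This is where the bulk of the technical work lies, and the proof will depend on the rigidity provided by the disk-packing combinatorics developed in Sections \ref{sec:2}--\ref{sec:main-proof}.
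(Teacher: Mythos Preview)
Your two-step plan --- first produce $\bar r$ with $2d-2$ attracting fixed points via Theorem~\ref{thm:C}, then deform $r$ until those fixed points become critical --- runs aground at the second step, and you acknowledge as much. Option (a), an anti-holomorphic Thurston rigidity argument, is precisely what the paper says in Section~\ref{sec:GL-intro} cannot be easily extended from Geyer's polynomial setting to rational maps; invoking it here is circular. Option (b), a continuation along a Hele-Shaw family, is not an argument but a hope: you would need to show the set $\{\omega_j=c_j\}$ is non-empty, and a dimension count alone does not give existence.

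The paper's proof bypasses this entire difficulty. The point you are missing is that the quasi-conformal surgery of \cite{1stpaper}, when applied to a droplet whose complementary components are \emph{deltoid-like} (three cusps, concave sides), produces super-attracting fixed points directly --- no post-hoc deformation is needed. The reason is geometric: each circular-triangle component of the circle-packing droplet contains a critical value of the Schwarz reflection, and the surgery extension places a critical point of the resulting rational map at the attracting fixed point inside that component. So the one-line proof in the paper is: take the Rhie-type circle packing (an annulus plus $d-1$ inscribed discs, giving $2d-2$ circular triangles), apply the surgery, and read off $2d-2$ finite critical fixed points. Compare the parallel argument in Theorem~\ref{thm:super}, where the same mechanism is spelled out: ``the droplets \ldots\ have $d+n-2$ deltoid-like curves as boundary, and our surgery procedure results in the same number of finite critical fixed points.''

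In short, the critical-point structure is built into the droplet geometry from the start; you do not need to manufacture it afterwards.
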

\begin{proof}
Circle packing gives us a droplet with $2d-2$ circular-triangles, and our surgery procedure results in the same number of finite critical fixed points.  
\end{proof}
{\bf Examples.}
\begin{itemize}
\item[(i)] If $d=n=2$ and $\infty$ is {\em not} a pole, (the complex conjugate of) the rational function,
\begin{equation*}
	r(z)=\frac{2z}{z^2-1},
\end{equation*}
 has $2d-2=2$ critical fixed points, i.e. $r'(\pm i)=0$ and $\overline{r(\pm i)}=\pm i$.
\item[(ii)] If $d=n=2$ and infinity {\em is} a pole, the following function has $2d-2=2$ critical fixed points:
\begin{equation*}
 	r(z)=\frac{z}{2}+\frac{1}{z}.
 \end{equation*} 
 We have $r'(\pm\sqrt 2)=0$ and $r(\pm\sqrt2)=\pm\sqrt2$.
\end{itemize}

We can ``interpolate'' between Geyer's theorem and \ref{thm:Rhie}.  Recall the sharpness results for the number of attracting fixed points.
\begin{itemize}
\item[(I)] Given numbers $d\geq 2$, $n\leq d$, and a partition $d=\mu_1+\cdots+\mu_n$ (where $\mu_j$ are positive integers), there exists a rational function $r$ of degree $d$ with $n$ finite poles of multiplicities $\mu_1,\cdots,\mu_n$ such that $\overline r$ has
\begin{equation*}
	\min(d+n-1,2d-2)
\end{equation*}
attracting fixed points.
\item[(II)] Also, there exists a rational function $r$ of order $d$ such that infinity is a pole of multiplicity $\mu_n$, and  $\mu_1\cdots,\mu_{n-1}$ are the multiplcities of the finite poles, and such that $\overline r$ has $d+n-2$ finite attracting fixed points.
\end{itemize}
It turns out that we can replace ``attracting'' with ``super-attracting'' in part (II).

\vspace{0.2cm}
\begin{thm}\label{thm:super} 
Given numbers $d\geq 2$, $n\leq d$, and a partition $d=\mu_1+\cdots+\mu_n$, there exists a rational function $r$ of degree $d$ such that infinity is a pole of multiplicity $\mu_n$, and $\mu_1\cdots,\mu_{n-1}$ are the multiplcities of the finite poles, and such that $\overline r$ has $d+n-2$ finite \underline{super}-attracting (or critical) fixed points.
\end{thm}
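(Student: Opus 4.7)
The strategy is to rerun the construction from the second part of the proof of Theorem \ref{thm:UQD} and invoke the same ``triangular-component gives critical fixed point'' mechanism that is used (essentially without comment) in the proof of Theorem \ref{thm:Rhie}.

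First, by Riemann--Hurwitz, a rational function $r$ of degree $d$ with $n$ distinct poles of multiplicities $\mu_1,\dots,\mu_n$ has total ramification $2d-2$ on $\widehat\CC$, of which $\sum_{j}(\mu_j-1)=d-n$ is absorbed at the poles. Hence $r$ has exactly $d+n-2$ finite critical points, and the theorem is equivalent to arranging each of them to be fixed by $\bar r$.

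Next, I would construct the droplet exactly as in the second part of the proof of Theorem \ref{thm:UQD}: start with an extreme UQD $\Omega_n$ of order $\mu_n$ (or, if $\mu_n=1$, with a thin ellipse), then successively inscribe extreme BQDs $\Omega_j$ of orders $\mu_j$ --- or round discs in case $\mu_j=1$ --- into the deltoid-like components of the growing complement, via Theorems \ref{thm:card-in-delt} and \ref{thm:circ-in-delt}. These inscriptions are tangential, so that, in combination with the Suffridge geometry of Theorems \ref{thm:geom1} and \ref{thm:geom2}, each of the $d+n-2$ complementary components of the final droplet is a curvilinear triangle: three smooth arcs meeting at three singular points (cusps or tangent double-points).

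A slight Hele-Shaw perturbation followed by the quasi-conformal surgery of Lemma~4.3 in \cite{1stpaper} now yields a rational function $r$ with the prescribed pole data and an attracting fixed point of $\bar r$ in each of the $d+n-2$ triangular complementary components. The triangular boundary structure forces the local degree of the surgery extension to be two at each attractor, so each attracting fixed point is actually a simple critical fixed point. This is consistent with a pigeonhole count as well: there are $d+n-2$ immediate basins and $d+n-2$ finite critical points of $r$, and each immediate basin must contain at least one critical point, so each basin must contain exactly one.

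The main obstacle I anticipate is verifying the local-degree claim in the previous paragraph, i.e.\ that the three singular points on the boundary of a curvilinear triangle are glued by the surgery into a single preimage of the attractor, producing a simple critical point at the fixed point itself. This is the same step tacitly invoked in the proof of Theorem \ref{thm:Rhie}; a self-contained verification would require tracking the Beltrami surgery of \cite{1stpaper} through a cusp or a tangent double-point of $\partial\Omega$, but it is believable because the three marked boundary points of the triangle correspond under the Schwarz function to a single preimage of the attracting fixed point.
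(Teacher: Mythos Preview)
Your overall strategy coincides with the paper's: rerun the droplet construction from the second half of Theorem~\ref{thm:UQD}, then apply the quasi-conformal surgery so that each complementary component of the droplet yields a super-attracting fixed point of $\bar r$. Your Riemann--Hurwitz count and the pigeonhole remark are nice additions that the paper does not spell out.

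There is, however, a genuine gap in the case $\mu_n=1$. Your claim that ``each of the $d+n-2$ complementary components of the final droplet is a curvilinear triangle'' is not correct here. When the outer droplet is an ellipse, the \emph{initial} inscription (two discs as in the ``goggle'' picture, or a cardioid-like BQD, cf.\ the second and third configurations of Figure~\ref{fig:initialUQD}) produces, in addition to the deltoid-like pieces, two \emph{bigon} components bounded by only two arcs meeting at two tangential double points. These bigons persist through the subsequent inscriptions (one never inscribes into them), so the final droplet has $d+n-4$ triangular components and $2$ bigons. For the bigons your ``three singular points $\Rightarrow$ local degree two'' mechanism is unavailable, and the pigeonhole argument does not rescue it: knowing that each immediate basin contains exactly one finite critical point of $r$ does not force that critical point to sit at the fixed point.

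The paper's fix is exactly the point where ``thin ellipse'' does real work (you mention the thin ellipse but do not say why it is needed). The Schwarz reflection of the exterior of the ellipse has two finite critical values, located on the major axis; if the ellipse is sufficiently eccentric these critical values lie in the two end-bigons. The surgery can then be carried out so that in each bigon the attracting fixed point absorbs the corresponding critical point of $S$, producing a critical fixed point of $\bar r$ there as well. Once this is in place, the two bigons contribute two critical fixed points and the $d+n-4$ triangular components contribute the remaining ones, matching the Riemann--Hurwitz total of $d+n-2$. So your write-up needs one extra sentence in the $\mu_n=1$ case explaining this alternative mechanism for the bigon components.
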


\begin{proof} If $\mu_n\geq 2$ then the droplets in the proof of (the second part of) Theorem \ref{thm:UQD} has $d-n-2$ deltoid-like curves as its boundary (see Section \ref{sec:main-proof}), and our surgery procedure results in the same number of finite critical fixed points.  If $\mu_n=1$ then we use a very skinny ellipse so the bygones contain critical values of the Schwarz reflection in the ellipse. The surgery again gives the desired number of critical fixed points.
\end{proof}

Combining Theorem \ref{thm:Rhie} and \ref{thm:super} we get
\vspace{0.2cm}
\begin{cor} 
For $d\geq 2$, $n\leq d$ and a partition $d=\mu_1+\cdots+\mu_n$ there exists a rational function with $d+n-2$ finite critical fixed points.  Moreover we can prescribe the multiplicities of the pole at $\infty$. 
\end{cor}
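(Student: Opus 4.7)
My plan is to deduce the Corollary directly from Theorems \ref{thm:Rhie} and \ref{thm:super}, splitting on what is prescribed to sit at $\infty$.

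If the multiplicity of the pole at $\infty$ is to be one of the $\mu_j$'s, I would relabel the partition so that $\mu_n$ is that value and invoke Theorem \ref{thm:super} verbatim. This yields a rational function of degree $d$ with pole of multiplicity $\mu_n$ at $\infty$, finite poles of multiplicities $\mu_1,\dots,\mu_{n-1}$, and exactly $d+n-2$ finite critical (super-attracting) fixed points. Since the relabeling is arbitrary, any one of the $\mu_j$ can be made to serve as the multiplicity at $\infty$.

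If instead $\infty$ is to be a regular point of $r$, then for $n=d$ (all simple finite poles) the number $d+n-2$ coincides with $2d-2$, and Theorem \ref{thm:Rhie} already delivers the desired rational function; the explicit examples following it (for instance $r(z)=2z/(z^2-1)$) confirm that one can arrange $r(\infty)\neq\infty$. For $n<d$ I would re-run the surgery argument of Theorem \ref{thm:super} but substitute the ``first part'' droplet of Theorem \ref{thm:UQD} (UQDs with all $n$ nodes finite, obtained by inscribing extreme BQDs inside a large round disc) for the ``second part'' droplet used there. By the geometric analysis of Section \ref{sec:main-proof}, this droplet supplies enough deltoid-like complementary components that the Hele-Shaw flow and the quasi-conformal surgery of Lemma 4.3 of \cite{1stpaper}, applied exactly as in the proof of Theorem \ref{thm:super}, produce $d+n-2$ critical fixed points of $\bar r$.

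The main obstacle I anticipate is this last subcase. One has to verify (i) that swapping droplets leaves the prescribed finite multiplicities $\mu_1,\dots,\mu_n$ intact after surgery, (ii) that the resulting $r$ indeed has $r(\infty)\neq\infty$, and (iii) that the complementary components of the ``first part'' droplet, although built from a disc plus inscribed BQDs rather than from an extreme UQD, still acquire the deltoid-like shape needed by the surgery once the Hele-Shaw perturbation has smoothed the transversal intersections of $\partial U_1$ with $\partial\Omega_1$ into the double points counted in the boundary of $\Omega$. Each of these three checks is a routine transcription of the conjugacy argument of Theorem \ref{thm:63}, the node-preservation property of Hele-Shaw flow recalled in Section \ref{sec:main-proof}, and the cusp/double-point bookkeeping used in Theorem \ref{thm:super}.
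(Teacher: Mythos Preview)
Your first paragraph is exactly the paper's argument: the proof in the text is the single sentence ``Combining Theorem~\ref{thm:Rhie} and~\ref{thm:super} we get,'' and your relabel-then-apply-Theorem~\ref{thm:super} move is all that is needed. Theorem~\ref{thm:Rhie} is cited only because it is the earlier, self-contained statement for the special case $n=d$ (where $d+n-2=2d-2$).

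Where you depart from the paper is in reading ``prescribe the multiplicity of the pole at $\infty$'' to include the option that $\infty$ carries \emph{no} pole. The paper makes no such claim; the Corollary only asserts that any one of the $\mu_j$ may be placed at $\infty$, which is precisely Theorem~\ref{thm:super}. Your subcase ``$\infty$ regular, $n<d$'' is therefore extraneous, and the argument you sketch for it is not, in fact, routine. In the first-part construction of Theorem~\ref{thm:UQD} the initial extreme BQD is inscribed in a round disc with only two tangential contacts, and one of the two resulting annular pieces is bounded by a circular arc and a convex sub-arc of the cardioid-like curve; it has only two corners and no cusp. So not all $d+n-1$ complementary components are deltoid-like, and your check~(iii) fails as stated. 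One could still try to argue that exactly $d+n-2$ of them are deltoid-like and that the surgery makes each of those fixed points critical (the Riemann--Hurwitz count in the Remark following the Corollary shows the numbers would fit), but this requires genuine additional work and is neither asserted by the Corollary nor supplied by the paper.
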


{\bf Examples.}
\begin{itemize}
\item[(i)] For the rational function with $d=2$ and $n=1$ given by
\begin{equation*}
r(z)= \frac{1}{z^2}+\frac{a}{z}+\frac{a^2}{4}-\frac{2}{\overline a},\quad a\neq 0,
\end{equation*}
there is at most one ($d+n-2=1$) critical fixed point given by
$r'(-2/a)=0$ and $r(-2/a)=-2/\overline a$. Even though there are two ($2d-2=2$) critical points, one of them is at the pole and, therefore, cannot be a fixed point.   The rational function can still have two fixed points: for $a>2^{5/3}/3^{1/3}$, there is another attracting (non-critical) fixed point at $z=(a^2 + \sqrt{32 a + a^4})/8$.
\item[(ii)] We have $d=3$, $n=2$ in the case
$$r(z)=\frac{z^2}{2}+\frac{c^3}{z}, \qquad c=\frac{2}{3}.$$
The points
$$c_j=\exp(2\pi i j / 3)\cdot c,\qquad j=1,2,3.$$
are critical fixed points of $\overline r$.  Note $d+n-2=3+2-2=3$.  Compare with an Apollonian packing of a disk inside the deltoid.
\item[(iii)]
A generalization:
\begin{equation*}
  r(z) = \frac{z^{d-1}}{d-1}+\frac{c^d}{z}, \quad c=\bigg(\frac{d-1}d\bigg)^{1/(d-2)}.
\end{equation*}
Then $\{\sqrt[d]{1} \cdot c\}$ are the critical fixed points of $\overline r$.  
\end{itemize}

{\bf Remark.}  We cannot replace ``attracting'' with ``super-attracting'' in part (I) unless $n=d$ (Theorem \ref{thm:Rhie}).   Though there are $2d-2$ critical points (counted with multiplicities), 
$$d-n=(\mu_1-1)+\cdots+(\mu_n-1)$$
of them are poles.  Poles are not fixed points since $\infty$ is not a pole.  Thus we are left with only $d+n-2$ critical points, which is strictly less than $\min(d+n-1,2d-2)$ unless $n=d$.

\bibliographystyle{alpha}

\begin{thebibliography}{99}
\bibitem{Michel} C. Michel, {\it Eine Bemerkung zu schlichten Polynomen}, { Bull. Acad. Polon. Sci.}, {\bf 18} (1970), 513-519
\bibitem{BL} D. Bshouty, A. Lyzzaik, {\it On Crofoot-Sarason’s Conjecture for Harmonic Polynomials}, Computational Methods and Function Theory. Volume 4(2004), No. 1, 35–41
\bibitem{AS76} D. Aharonov, H. S. Shapiro, {\it Domains on which analytic functions satisfy quadrature identities}, J. Analyse Math. {\bf 30} (1976), 39-73.
\bibitem{Brannan} D. A. Brannan, {\it Coefficient regions for univalent polynomials of small degree}, Mathematika {\bf 14} (1967), 165-169.
\bibitem{Carleson-Gamelin} L. Carleson, T. W. Gamelin, {\it Complex dynamics}, Springer- Verlag, New York, 1993.
\bibitem{Cowling-Royster} V. F. Cowling, W. C. Royster, {\it Domains of variability for univalent polynomials}, Proc. Amer. Math. Soc. {\bf 19} (1968), 767-772.
\bibitem{Davis} P. J. Davis, {\it The Schwarz Function and its Applications}, Carus Math. Monographs No.17, Math. Assoc. Amer., 1974.
\bibitem{Dieudonne} J. Dieudonn\'e, {\it Recherches sur quelques probl\'emes relatifs aux polynomes et aux fonctions
born\'ees d'une variable complexe}, Ann. Ecole Norm. Sup. (3) {\bf 48} (1931), 247-358.
\bibitem{Ge03} L. Geyer, {\it Sharp bounds for the valence of certain harmonic polynomials}, Proc. Amer. Math. Soc. {\bf 136} (2008), 549-555. 
\bibitem{Gus05} B. Gustafsson, H.S. Shapiro, {\it What is a Quadrature Domain?},  Quadrature Domains and Their Applications, Operator Theory: Advances and Applications, Volume 156, 2005, 1-25.
\bibitem{Gu1} B. Gustafsson, {\it Quadrature identities and the Schottky double}, Acta Appl. Math. {\bf 1} (1983), 209-240.
\bibitem{Kha1} D. Khavinson, G. \'Swi\c atek
{\it On the Number of Zeros of Certain Harmonic Polynomials}, Proc. Amer. Math. Soc., {\bf 131} 2 (2003), 409-414.
\bibitem{Kha2} Dmitry Khavinson and Genevra Neumann, On the number of zeros of certain rational harmonic functions, Proceedings of the American Mathematical Society Volume 134, Number 4 (2005), 1077-1085
\bibitem{Kossler} M. K\"ossler, {\it Simple polynomials}, Czechoslovak Math. J., {\bf 1} (76) (1951), 5--15
\bibitem{1stpaper} S.-Y. Lee, N. Makarov, {\it Topology of quadrature domains}, submitted.
\bibitem{Rh03} S. H. Rhie, {\it $n$-point gravitational lenses with $5(n-1)$ images}, arXiv:astro-ph/0305166.
\bibitem{Sakai-book-QD} M. Sakai. {\it Quadrature domains}, Lect. Notes Math. 934, Springer-Verlag, Berlin-Heidelberg 1982.
\bibitem{Sakai83} M. Sakai, {\it Applications of variational inequalities to the existence theorem on quadrature domains}, Trans. Amer. Math. Soc. {\bf 276} (1983), 267-279.
\bibitem{Suffridge72} T.J. Suffridge, {\it Extreme points in a class of polynomials having univalent sequential limits}, Transactions of the American Mathematical Society {\bf 163} (1972) 225-237.
\bibitem{Witt90} H.J. Witt, Investigation of high amplification events in light curves of gravitationally lensed quasars, Astron. Astrophys. 235, 1990, 311-322
\bibitem{Talbot} Lockwood, E. H. A Book of Curves. Cambridge, England: Cambridge University Press, p. 157, 1967.




\end{thebibliography}

\vspace{1cm}
{\sc Department of Mathematics and Statistics, University of South Florida,\newline Tampa, FL 33647, USA}
\\
{\it E-mail address:} {\tt lees3@usf.edu}
\vspace{1cm}

{\sc Department of Mathematics, California Institute of Technology,\newline Pasadena, CA 91125, USA}
\\
{\it E-mail address:} {\tt makarov@caltech.edu}

\end{document}